\def \LR{{\sf LR}}  
\def \RR{{\sf RR}}
\def \bpar#1{\left\{\begin{array}{#1} }
\def \epar { \end{array}\right.}
\newtheoremstyle{models}
{}
{}
{}
{}
{\it}
{}
{ }
{\thmname{#1}\thmnumber{ #2}\thmnote{ #3}}
\theoremstyle{models}
\declaretheorem[name=Model,qed={\lower-0.3ex\hbox{$\lhd$}}]{model}
\theoremstyle{theorem}
\def \1{\textbf{1}}
\def \Z{\mathbb{Z}}
\def \app#1#2#3#4#5{\begin{array}{rccl} #1:&#2&\longrightarrow&#3\\ &#4&\longmapsto&#5\end{array}}
\def \N{\mathbb{N}}
\def \R{\mathbb{R}}
\def \cS{{\cal S}}
\def \black{\color{black}}
\def \bar{\overline}
\def \ba{\begin{align}}
\def \ea{\end{align}}
\def \be{\begin{eqnarray*}}
\def \ee{\end{eqnarray*}}
\def \ben{\begin{eqnarray}}
\def \een{\end{eqnarray}}
\def \beq{\begin{equation}}
\def \eq{\end{equation}}
\def \build#1#2#3{\mathrel{\mathop{\kern 0pt#1}\limits_{#2}^{#3}}}
\def \cS{{\cal S}}
\def \dis{\displaystyle}
\def \equi{\Leftrightarrow}
\def \eref#1{(\ref{#1})}
\def \l{\left}
\def \r{\right}
\def \sous#1#2{\mathrel{\mathop{\kern 0pt#1}\limits_{#2}}}
\def \sur#1#2{\mathrel{\mathop{\kern 0pt#1}\limits^{#2}}}
\def \eqd{\sur{=}{(d)}}
\def \E{\mathbb{E}}
\newcommand{\compact}{ \topsep0pt   \itemsep=0pt   \partopsep=0pt   \parsep=0pt}
\newcounter{c}
\def \bir{\begin{itemize}\compact \setcounter{c}{0}}
\def \eir{\end{itemize}\vspace{-2em}~}
\def \bit{\begin{itemize}\compact}
\def \eit{\end{itemize}}
\newcounter{d}
\def \bia{\begin{itemize}\compact \setcounter{d}{0}}
\def \eia{\end{itemize}\vspace{-2em}~}
\newcounter{b}
\def \bi{\begin{itemize}\compact \setcounter{b}{0}}
\def \ei{\end{itemize}\vspace{-2em}~}
\def \bis{\begin{itemize}\compact}
\def \eis{\end{itemize}}
\def \HL{{\sf HL}}
\def \L{{\sf L}}
\def \CT{{\sf CT}}
\def \Y{{\sf Y}}
\begin{document}

\newtheorem{fig}{\hspace{2cm} Figure}
\newtheorem{lem}{Lemma}
\newtheorem{defi}[lem]{Definition}
\newtheorem{pro}[lem]{Proposition}
\newtheorem{theo}[lem]{Theorem}
\newtheorem{cor}[lem]{Corollary}
\newtheorem{note}[lem]{Note}
\newtheorem{conj}{Conjecture}
\newtheorem{Ques}{Question}
\newtheorem*{rem}{Remark}
\renewcommand{\baselinestretch}{1}

\title{\bf The combinatorics of the colliding bullets}
\date{\today}
\author{ Nicolas Broutin\thanks{Sorbonne Université, Sorbonne Paris Cité, CNRS, Laboratoire de Probabilités Statistique et Modélisation, LPSM, F-75005 Paris, France. Email: nicolas.broutin@upmc.fr} \footnote{Both authors are very grateful to the NYU-ECNU Institute of Mathematical Sciences at NYU Shanghai where most of this work has been done.}
        \and Jean-Fran\c{c}ois Marckert\thanks{CNRS, LaBRI, Université Bordeaux} \footnotemark[2]}

\newcommand{\scl}{\text{\sc l}}
\newcommand{\scr}{\text{\sc r}}
\newcommand{\bP}{\mathbf P}
\newcommand{\bD}{\mathbf D}
\newcommand{\cE}{\mathcal E}
\newcommand{\cF}{\mathcal F}
\newcommand{\fS}{\mathfrak S}
\newcommand{\sC}{\mathcal C}
\newcommand{\cR}{\mathcal R}
\newcommand{\cV}{\mathcal V}
\newcommand{\cP}{\mathcal P}

\newcommand{\bV}{\mathbf V}
\newcommand{\bq}{\mathbf q}
\newcommand{\cG}{\mathcal G}
\newcommand{\I}[1]{{\mathbf 1}_{\{#1\}}}
\newcommand{\bDelta}{{\boldsymbol\Delta}}
\newcommand{\nic}{\color{orange}}
\newcommand{\red}{\color{red}}
\newcommand{\alert}[1]{
    \begin{tcolorbox}[
        valign=center,
      enhanced,
      boxrule=0pt,
      arc=0pt,
      outer arc=0pt,
      interior code={\fill[overlay,red] (frame.north west) rectangle (frame.south east);},
    ]\bf #1\end{tcolorbox}
}

\maketitle

\begin{abstract}
The finite colliding bullets problem is the following simple problem: consider a gun, whose barrel remains in a fixed direction; let $(V_i)_{1\le i\le n}$ be an i.i.d.\ family of random variables with uniform distribution on $[0,1]$; shoot $n$ bullets one after another at times $1,2,\dots, n$, where the $i$th bullet has speed $V_i$. When two bullets collide, they both annihilate. We give the distribution of the number of surviving bullets, and in some generalisation of this model. While the distribution is relatively simple (and we found a number of bold claims online), our proof is surprisingly intricate and mixes combinatorial and geometric arguments; we argue that any rigorous argument must very likely be rather elaborate.

\end{abstract}



\section{Introduction}

\subsection{Motivation and models of interest}
\label{sec:motivation}
 
The colliding bullets problem may be stated as follows: a gun whose position and direction remains fixed shoots bullets, one every second. The speeds of the bullets are random, independent and uniform in $[0,1]$. Upon collision, they both annihilate without affecting the others speeds. The main questions of interest concern the distribution of the number of surviving bullets: if we fire $n$ bullets, what is the probability that $k$ bullets escape to infinity? What is the probability if we fire an infinite number of bullets?

\medskip 
The aim of this note is to solve the finite case, as well as some generalizations defined here:



\def \mA{\text{\textsc{ru}}}
\begin{model}\label{def:mA}
{-- \sc Colliding bullets with random speeds and unit delays:}
 $n$ bullets are fired at times $1, 2, \cdots, n $; the respective speeds of the bullet are i.i.d.\ random variables $v_1,\cdots,v_n$ taken under a distribution $\mu$ with support in $[0,+\infty)$ having no atom. Denote by 
\ben
\bP^{\mA}_{n}:=\bP^{\mA}_{n,\mu}=(P_{n}^{\mA}(k),0\leq k \leq n)
\een 
the distribution of the number of surviving bullets. 
\end{model}

\def \mB{\text{\textsc{rr}}}
\begin{model}\label{def:mB}
{-- \sc Colliding bullets with random speeds and random delays:}
$n$ bullets are fired; the respective speeds of the bullets are i.i.d.\ random variables $v_1,\cdots,v_n$ with common distribution $\mu$ with support in $[0,+\infty)$ having no atom. The $i$th bullet is shot at time $T_j=\sum_{k=1}^{j-1} \Delta_k$ where $(\Delta_1,\cdots,\Delta_{n-1})$, the inter-bullet delays, are i.i.d.\ positive random variables (and independent of the speeds) taken under a distribution $\nu$ having no atom at $0$.
Denote by 
\ben
\bP^{\mB}_{n}:=\bP^{\mB}_{n,\mu,\nu}=(P_{n}^{\mB}(k),0\leq k \leq n)
\een 
the distribution of the number of surviving bullets.
\end{model}

\def \mC{\text{\textsc{ff}}}
\begin{model}\label{def:mC}
{-- \sc Colliding bullets with fixed speeds, and fixed delays:}
Choose a vector of $n$ distinct speeds $\bV:=(V_1,\cdots,V_n)\in[0,+\infty)^n$ and $n-1$ inter-bullet delays $\bDelta:=(\Delta_1,\cdots,\Delta_{n-1})$, some positive numbers. Let $\sigma$ and $\tau$ be independent uniform random permutations on $\fS_n$ and $\fS_{n-1}$, respectively. The $i$th bullet has speed $V_{\sigma_i}$ and is shot at time $T^{\tau}_i= \Delta_{\tau_1}+\cdots+\Delta_{\tau_{i-1}}$ (so that $T^{\tau}_1=0$, and the increments of the sequence $(T_i^\tau,1\leq i \leq n)$ are the $\Delta_{\tau_j}$). Assume moreover that for any $(\sigma,\tau)$, $\l[\l(V_{\sigma_i},1\leq i \leq n\r),\l(T^{\tau}_i,1\leq i \leq n\r)\r]$ is generic in the following sense: if we consider that collisions have no effect, there are no pairs $(\sigma,\tau)$, no times, at which three bullets, are exactly at the same place (see formal  Definition \ref{def:gene}). Denote by
\ben
\bP^{\mC}_n=\bP^{\mC}_{\bV,\bDelta}:=\l(P^{\mC}_{\bV,\bDelta}(k),0\leq k \leq n\r)
\een
the distribution of the number of surviving bullets.
\end{model}

 The version of the problem in Model~\ref{def:mC} makes an important link with combinatorics that will be crucial to our approach (see the remark following Theorem~\ref{theo:main} for details).  

\def \mAC{\text{\textsc{faf}}}

\begin{model}\label{def:mD}
{-- \sc Colliding bullets with fixed acceleration functions, and fixed delays:} Fix a continuous increasing function $f:\R^+\to \R^+$ such that $f(0)=0$. Choose an \emph{impetus} vector of $n$ distinct elements ${\bf I}:=(I_1,\cdots,I_n)\in[0,+\infty)^n$ and $n-1$ inter-bullet delays $\bDelta:=(\Delta_1,\cdots,\Delta_{n-1})$, some positive numbers. Let $\sigma$ and $\tau$ be independent uniform random permutations on $\fS_n$ and $\fS_{n-1}$, respectively. For the same convention as in the previous model, the $i$th bullet is shot at time 
$T^{\tau}_i= \Delta_{\tau_1}+\cdots+\Delta_{\tau_{i-1}}.$ The speed of the bullet is not constant (in general): the distance between bullet $i$ and the origin at time $t\geq T^{\tau}_i$ is 
\[D_t(i)= f( I_{\sigma_i} (t- T^{\tau}_i)).\] 
Hence, when it exists, the speed at time $t$ of the $i$th shot bullet is $I_{\sigma_i} f'( I_{\sigma_i} (t- T^{\tau}_i))$ so that for $f(x)=x$ we recover the preceding bullet problem (for $\bV={\bf I}$). For $f(x)=x^2$ we have accelerating bullets with ``constant'' acceleration $2I_{\sigma_i}^2$, for $f(x)=\sqrt{x}$ the asymptotic speed is zero, for $f(x)= 1-\exp(-x)$ the bullets slow down and converge in time $+\infty$ to 1.  Assume again genericity in the sense explained in the previous problem. Denote by
\ben
\bP^{\mAC}_n=\bP^{\mAC,f}_{{\bf I},\bDelta}:=\l(P^{\mAC}_{{\bf I},\bDelta}(k),0\leq k \leq n\r)
\een
the distribution of the number of surviving bullets.
\end{model}

\subsection{Main results and discussion}
\label{sec:results}

For every $n\ge 0$, let $\bq_n$ be the probability distribution that is uniquely characterized by the following recurrence relation and initial conditions:
\ben\label{eq:q01}
q_1(1)=1, \qquad q_1(0)=0, \qquad q_0(0)=1,
\een
and for $N\geq 2$, for any $0\leq k \leq N$,
\ben\label{eq:q02}
q_{N}(k)=\frac{1}{N}q_{N-1}(k-1)+\l(1-\frac1N\r)q_{N-2}(k)
\een
with $q_n(-1)=q_n(k)=0$ if $k>n\geq 0$. 
In other words,  $\bq_N$ is the distribution of $X_N$, where $(X_n,n\geq 0)$ is a simple Markov chain with memory 2 defined by $X_0=0$, $X_1=1$ and for $n\geq 2$, 
\ben\label{eq:law_Xn}
X_n &\eqd& B_{1/n}(1+X_{n-1})+(1-B_{1/n}) X_{n-2}
\een
where $(B_{1/n},n\geq 1)$ is a sequence of independent Bernoulli random variables with respective parameters $1/n$, $n\ge 1$. \medskip 

We are now ready to state our main result:
\begin{theo}\label{theo:main} We have, for any $n\geq 0$,
\[\bP_n^\mA=\bP_n^\mB=\bP_n^\mC=\bP_n^\mAC=\bq_n.\] 
\end{theo}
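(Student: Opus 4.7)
The plan is first to reduce Models A, B, AC to Model C, and then to prove Model C by strong induction on $n$. For the reductions: Model A coincides with Model C at $\bDelta=(1,\ldots,1)$---equal delays make $\tau$ act trivially, and the atomless $\mu$ makes the ranking of the i.i.d.\ speeds uniform in $\fS_n$, matching Model C's $\sigma$. Model B, conditioned on the order statistics of both the speeds and the delays, is Model C at those sorted values; once Model C is proved for every $(\bV,\bDelta)$, averaging returns the same $\bq_n$. For Model AC, strict monotonicity of $f$ makes the collision equation $f(I_i(t-T_i^\tau))=f(I_j(t-T_j^\tau))$ equivalent to $I_i(t-T_i^\tau)=I_j(t-T_j^\tau)$---exactly the Model C collision equation with $\bV={\bf I}$---so the collision pattern, and hence the survivor count, coincide.

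For Model C, after checking $n\le 2$ by hand, I would induct, aiming at
\[P^\mC_n(k)=\tfrac{1}{n}\,q_{n-1}(k-1)+\bigl(1-\tfrac{1}{n}\bigr)\,q_{n-2}(k)\]
by conditioning on the speed rank $\sigma_n\in\{1,\ldots,n\}$ of the last-shot bullet, which is uniform. In the easy case $\sigma_n=1$ (probability $1/n$), the last bullet is the slowest: every other bullet is faster and is ahead of it (so moves away), and nothing is behind to catch it, so it survives without interacting. The remaining bullets form a Model C$(n-1)$ instance, with speeds $\{V_2,\ldots,V_n\}$ in a uniformly random order and delays a uniformly random $(n-2)$-element sub-sequence of $\bDelta$; by induction their survivor count is distributed as $\bq_{n-1}$, contributing $\tfrac{1}{n}q_{n-1}(k-1)$.

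The hard case $\sigma_n>1$ requires showing the conditional survivor distribution equals $\bq_{n-2}$. The naive pairing of the slowest with its ``executioner'' fails on two linked counts: (i) which bullet kills the slowest is timing-dependent, and if the bullets behind the slowest happen to pair off among themselves the slowest actually \emph{escapes}; (ii) once the slowest dies, bullets originally behind its former position can cross the vacated slot and collide with bullets originally ahead, so a front/rear split is not independent. Resolving these cross-interactions is the core of the proof.

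To attack it I would combine the non-crossing structure of the collision matching---an elementary genericity argument shows any first collision is between bullets $(k,k+1)$ adjacent in shoot-order, since any bullet shot between them would either have been overtaken from behind or have overtaken one of them earlier, contradicting the first-collision hypothesis---with the joint randomness of $(\sigma,\tau)$. Concretely, one conditions further on the identity of the first-colliding pair $(k,k+1)$, removes it, and studies the $(n-2)$-bullet residual system whose delays contain a merged block $\Delta_{\tau_{k-1}}+\Delta_{\tau_k}+\Delta_{\tau_{k+1}}$ in place of three consecutive original delays. The main obstacle is to show that, upon averaging over the remaining randomness in $(\sigma,\tau)$ conditional on this first-collision event, the residual system is distributionally equivalent to a Model C$(n-2)$ instance, so that induction gives $\bq_{n-2}$. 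Reconciling the non-uniform merged-delay structure with Model C$(n-2)$'s uniform delay permutation is where the paper's promised intricate combinatorial-geometric argument must take place.
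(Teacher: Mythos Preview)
Your reductions of Models~\textsc{ru}, \textsc{rr}, and \textsc{faf} to Model~\textsc{ff} are correct and match the paper's Proposition~\ref{pro:auxiliary_models}. Your easy case $\sigma_n=1$ is also fine and corresponds to the paper's case $a=n$.

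The gap is in the hard case. Your plan---condition on the first colliding pair $(k,k+1)$, remove it, and hope the residual is a Model~\textsc{ff}$(n-2)$ instance---runs into a bias problem that you have not fully confronted. You flag the merged delay $\Delta_{\tau_{k-1}}+\Delta_{\tau_k}+\Delta_{\tau_{k+1}}$ as the obstacle, but the deeper issue is that the event ``$(k,k+1)$ is the first collision'' depends on the actual values in $(\bV,\bDelta)$ and biases \emph{both} the residual speed permutation and the residual delay permutation in a $(\bV,\bDelta)$-dependent way. There is no reason the residual $(\sigma',\tau')$ should be uniform, and in fact it is not; unbias-ing this conditioning is not what the paper's ``intricate combinatorial-geometric argument'' does.

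The paper's route is structurally different. It never conditions on a first collision. Instead, it proves that the map $(\bV,\bDelta)\mapsto \bP^{\textsc{ff}}_{\bV,\bDelta}$ is \emph{constant} on generic parameters by continuously lowering $\min\bV$ to zero and controlling what happens when the deformation crosses a singular parameter (a value where three virtual trajectories meet). Once $\min\bV=0$, the slowest bullet literally stays in the barrel, so if it is not last it is hit \emph{deterministically} by the very next bullet---no conditioning on which collision is first, hence no bias. The recursion \eqref{eq:q02} then follows cleanly (this is Lemma~\ref{lem:qn_heriditary}). The intricate part of the paper is the invariance across singular parameters, handled via the constrained models $\LR$/$\RR$ of Section~\ref{sec:restrictions}; it is not a repair of a first-collision decomposition.
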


\begin{rem} (a) Theorem \ref{theo:main} in particular states that the distribution of the number of surviving bullets is independent of $(\bV,\bDelta)$, provided that the colliding bullets problem is well-defined (the probability of a triple collision is zero). One could wonder if this is just the consequence of a much stronger result that would say that the law of the \emph{set of surviving bullets} is independent of $(\bV,\bDelta)$. Exhaustive enumeration all the configurations for examples with few bullets show that the stronger statement is false. \par

(b) As we already mentioned, our analysis will principally rely on the study of ``the permutation model'' $\bP_n^\mC$ for which the pair speeds-delays $(\bV,\bDelta)$ is fixed, but uniformly and independently permuted. When $(\bV,\bDelta)$ is fixed (and generic), the distribution $\bP^{\mC}_{\bV,\bDelta}$ has a combinatorial flavour: the probability that $k$ bullets survive is proportional to the number of permutations $(\sigma,\tau)$ for which this property holds. However, since the distribution of the ``identities'' of the surviving bullets is not the same in general for two different pairs $(\bV',\bDelta')$ and $(\bV,\bDelta)$, the proof of Theorem~\ref{theo:main} cannot rely only on the specifics of the permutations $(\sigma,\tau)$ and must take into account the pair speeds-delays. Our key result is the proof that the map $(\bV,\bDelta)\mapsto \bP^{\mC}_{\bV,\bDelta}$ is constant in the set of generic elements. 

 When one reduces a speed, for example $V_{1}$, the distribution of the set of configurations dramatically changes, and some avalanches of consequences arise (this has somehow the flavour of the \emph{jeu de taquin} used in the Robinston--Schensted correspondence). For some choices of $(\sigma,\tau)$, when one replaces $V_{1}$ by $V_{1}'=V_{1}-`e$, a bullet $A$ which was surviving now collides with another one, say bullet $B$; bullet $B$ which used to collide with bullet $C$, is now destroyed by bullet $D$, etc... The paper is principally devoted to the recursive control of these avalanches of combinatorial modifications that comes from the reduction of a speed.
\end{rem}

The simple form of the recurrence relation for $\bq_n$ also allows us to derive asymptotics for the number of surviving particles as $n$ tends to infinity. The proof relies on a connection between $\bq_n$ and cycles in random permutations that we present in Section~\ref{sec:alt_models}, and we shall present the proof at that point. In the following $\mathcal N(0,1)$ denotes a centered Gaussian random variable with unit variance.

\begin{pro}\label{pro:limit_dist} For $X_n\sim \bq_n$, we have the following convergence in distribution
\[\frac{X_n-\frac 1 2 \log(n)}{\sqrt{\frac 1 2\log(n)}}\xrightarrow[n\to\infty]{(d)} {\cal N}(0,1).\]
\end{pro}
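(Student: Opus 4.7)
The plan is to identify $X_n$ in distribution with the number $O_n$ of odd-length cycles of a uniform random permutation of $[n]$, and then to invoke the classical central limit theorem for cycle counts in random permutations.

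To make the identification, introduce the increment $\Delta_n := X_n - X_{n-1}$. The recurrence \eref{eq:law_Xn} rewrites as
\[
\Delta_n \;=\; B_{1/n} \;-\; (1 - B_{1/n})\,\Delta_{n-1},
\]
so that $\Delta_n = +1$ whenever $B_{1/n} = 1$, and $\Delta_n = -\Delta_{n-1}$ otherwise; a one-line induction (with $\Delta_1 = 1$) shows $\Delta_n \in \{-1,+1\}$, whence $X_n = \sum_{k=1}^n \Delta_k$. I then couple the Bernoullis $(B_{1/k})$ to the Feller coupling for random permutations: take $\xi_1 = 1$ and, for $k\geq 2$, let $\xi_k \sim \mathrm{Bernoulli}(1/k)$ be independent, and recall that the cycle lengths of a uniform random permutation of $[n]$ are precisely the spacings between consecutive $1$'s in $(\xi_1,\ldots,\xi_n,1)$ (with a virtual final $1$). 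Within a block $[i_j,i_{j+1}-1]$ of length $\ell_j$, the rule above forces $(\Delta_{i_j},\Delta_{i_j+1},\ldots,\Delta_{i_{j+1}-1}) = (+1,-1,+1,\ldots)$, so that this block contributes $\ell_j \bmod 2$ to $X_n$. Summing over blocks yields $X_n \eqd O_n$.

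It then remains to apply the CLT for $O_n$. Writing $C_\ell^{(n)}$ for the number of $\ell$-cycles, one has $\E[C_\ell^{(n)}] = 1/\ell$ for $\ell \leq n$, so $\E[O_n] = \sum_{\ell\leq n,\,\ell\text{ odd}} 1/\ell = \frac{1}{2}\log n + O(1)$, and an analogous computation produces $\Var(O_n) = \frac{1}{2}\log n + O(1)$. Using the Arratia--Barbour--Tavaré total-variation coupling, the vector of cycle counts $(C_\ell^{(n)})$ may be coupled to independent $Z_\ell \sim \mathrm{Poisson}(1/\ell)$, so that $O_n$ is replaced by $\widetilde O_n := \sum_{\ell \leq n,\,\ell\text{ odd}} Z_\ell$ with CLT-negligible error. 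Applying Lindeberg's CLT to $\widetilde O_n$ (whose variance diverges as $\frac{1}{2}\log n$) yields the claimed convergence. The main technical step is the block identification $X_n \eqd O_n$; once that is in place, the CLT is a direct consequence of classical tools for cycles of random permutations.
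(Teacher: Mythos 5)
Your proof is correct, but it takes a genuinely different route from the paper's. The paper first identifies $\bq_n$ with the law of the number of odd cycles of a uniform permutation via Theorem~\ref{theo:other} (proved there by a surgery on the cycle containing the element $1$), and then derives the CLT by \emph{analytic combinatorics}: the bivariate generating function $P^\circ(z,u)=(1+z)^{(u-1)/2}(1-z)^{-(1+u)/2}$, singularity analysis on a $\Delta$-domain, and the quasi-powers theorem. You instead run everything through the Feller coupling. Your increment analysis is right: $\Delta_n=B_{1/n}-(1-B_{1/n})\Delta_{n-1}$ gives $\Delta_n\in\{\pm1\}$, each block between consecutive successes of the $\xi_k$ contributes its length modulo $2$, and since the block lengths are exactly the cycle lengths of a uniform permutation of $[n]$, you get the exact identity $X_n\eqd O_n$ (this is, in essence, the paper's Model~8 combined with the Feller representation, and it is arguably a cleaner derivation of that case of Theorem~\ref{theo:other} than the paper's cycle surgery). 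The CLT then follows from Poisson approximation; note that $\widetilde O_n$ is itself Poisson with mean $\tfrac12\log n+O(1)$, so you do not even need Lindeberg. One point you should make explicit: the replacement of $O_n$ by $\widetilde O_n$ must have error $o_P(\sqrt{\log n})$ over the \emph{whole} range $\ell\le n$, whereas the total-variation statement of Arratia--Barbour--Tavar\'e is for $\ell\le b$ with $b=o(n)$; this is fixed either by the $L^1$ bound on $\sum_{\ell\le n}|C^{(n)}_\ell-Z_\ell|$ under the Feller coupling, or by truncating at $b=n/\log\log n$ and using the deterministic bound $\sum_{\ell>b}C^{(n)}_\ell\le n/b=o(\sqrt{\log n})$ together with Markov for the Poisson tail. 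As for what each approach buys: yours is elementary, purely probabilistic, and produces an explicit coupling of $X_n$ with the permutation; the paper's is heavier but yields uniform control of $\E[u^{X_n}]$ near $u=1$, hence moment asymptotics, a speed of convergence via quasi-powers, and access to local limit theorems for $q_n(k)$.
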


\subsection{Discussion} 
\label{sec:discussion}

Our aim was initially to attack the infinite version, which has been attributed to David Wilson. One quickly realizes that estimates for the probability that all the bullets shot from some time interval vanish (and of related events) should be rather useful when trying to cook up a Borel--Cantelli type argument. This led us to investigate the finite version. In \cite{ibm2014a}, the readers where asked to compute numerically the probability that 20 bullets all annihilate when the speeds are uniform. Apparently motivated by this question, the finite version of the colliding bullets problem arose in a number of online forums (see \cite{stack,stack2}, for instance).  There, the distribution of the number of surviving bullets is \emph{claimed} to be $\bq_n$, and a number of people discuss justifications. However, we were unable to understand the arguments that are claimed to be proofs; this led us develop our own solution (which in the end, seems to be the only rigorous one; see later).

Our proof of Theorem~\ref{theo:main} is rather involved, despite the simplicity of the form of the distributions $\bq_n$. Together with the fact that $\bq_n$ also appears in a number of simpler models that we present in Section~\ref{sec:alt_models}, this may lead the reader to think that there should exist a much shorter and efficient proof. It is possible. Nevertheless, here are some facts that explain why some of the intricate considerations we go through here should be present in any proof:

\begin{itemize}
	\item \emph{Biased permutations in conditional spaces.} First, consider Model \mA. With probability $1/n$ the slowest bullet is fired last and, similarly, also with probability $1/n$ the fastest is fired the first. If one or both of these events occur then the corresponding bullet(s) survive(s). Furthermore, the delays between the remaining bullets are unchanged and all equal to 1. So by exchangeability, the problem reduces to a problem of the same type, and with smaller size. However, if none of these events occur, any decomposition appears to be much more involved: for example, if it is fired after $i-1$ others, for some $i<n$, the slowest bullet may survive or not. When one searches to condition on the survival or destruction of the slowest bullet (or of any other bullet), one quickly realises that the permutation of speeds of the remaining bullets is biased. In the case of eventual survival, the bullets shot after it must collide pairwise, and their space-time diagram must not cross that of the slowest bullet. This creates a bias that looks difficult to handle. 

	\item \emph{Non constant delays.} Even worse, in the case of eventual collision, even if for some reason, (a) the distribution of the speed that collides with it was known, and (b) no other collision occurs first, then after the collision, the delays between the bullets are no more constant; even if we rewind time back to zero, one of the delays is now 3. 

	\item \emph{Random delays?} The previous considerations lead us to study Model \mB, since in any decomposition, the removal of any bullets, will make the delay between the bullets become non constant. However, one also quickly realizes that the distribution of the delay between shots has all the chances to be modified by the removal of two bullets; and the exchangeability might be ruined. 

	\item \emph{Fixed distinct delays must be permuted. }Overall, one is led to consider a sequence of arbitrary delays between the shots. 
	But some computer experiments show that if we replace the unit inter delays $1,\cdots, 1$ by  some deterministic positive and distinct real numbers $\delta_1,\cdots,\delta_{k-1}$, then the distribution of the number of surviving bullets when the bullet speeds are uniformly permuted is not constant and depends on the vector $(\delta_i,1\leq i \leq k-1)$. What appears to be true however (and this is verified by our analysis), is that if one permutes \emph{both} the bullet speeds and the delays, independently, then the distribution of the surviving bullets appears to be the same and given again by $\bq_n$. Hence, Model~\mC~does not appear only as a generalization of Model \mA, but indeed, as a tool to analyze it. 
\end{itemize}

Therefore, when studying the quenched Model~\mC, one of the important points is to guarantee that at every level of the induction, the permutations of both the speeds and of the delays are equally likely and independent, which takes us back to the first point.

When discussing our findings with colleagues, we heard from Vladas Sidoravicius that Fedja Nazarov had an unpublished proof of the fact that, for every $n\ge 1$, one has $q_{2n}(0) = \prod_{i=1}^n (1-\frac 1 {2i})$; this is also stated in \cite{DyJuKiRa2016a}, but the proof is not reproduced there. So, to the best of our knowledge, our proof of Theorem~\ref{theo:main} is the first rigorous treatment of the distribution of the number of surviving bullets in the finite bullets colliding problem.

\subsection{Simpler natural models following the same distribution}
\label{sec:alt_models}

We present here three other models in which the probability distributions $\bq_n$, $n\ge 1$, play a special role. We have found the first two  discussed in a forum about the bullet problem, but we could not find anything that could be considered remotely close to a proof that the colliding bullets problem is indeed equivalent to these models: it seems that some of the users have noticed that the distribution is the same, by a mixture of simulations, exhaustive enumeration, but also what seems to be wrong proofs from the level of details that were provided. In any case, from what we could see, it seems that none of the pitfalls that we have mentioned at the end of the previous section has been dealt with correctly (\cite{stack},\cite{stack2}).

\def \mE{5}
\begin{model}
{-- \sc Sorted bullet flock.} Let $\mu$ be a probability distribution on $(-\infty,+\infty)$ without atoms; $n$ bullets with i.i.d.\ speeds $v_1,\cdots,v_n$ with common distribution $\mu$ are fired at times $1,\cdots, n$. At time 0, the set of living bullet is $L_0=\varnothing$. The $i$th bullet is fired at time $i$ and:
\begin{itemize}
	\item if $v_i\leq \min(L_{i-1})$ then $L_{i}:=L_{i-1}\cup \{v_i\}$; 
	\item if $v_i> \min(L_{i-1})$, $L_i=L_{i-1}\setminus \{\min(L_{i-1})\}$.
\end{itemize}
In other words, if bullet $i$ is faster than one of the surviving bullets, it collides instantaneously with the slowest, and both of them disappear. Otherwise bullet $i$ is the slowest, and it is added to the list of surviving bullets.  
Denote by $\bP_n^{(\mE)}$ the distribution of the number of bullets in the flock at time $n$.
\end{model}

\def \mH{6}
\begin{model}
{-- \sc Odd cycles in random permutations.} Let ${\bf s}$ be a permutation chosen uniformly at random in the symmetric group on $\{1,\cdots,n\}$, and let $Z_n$ be the number of cycles with odd length in the cycle representation of ${\bf s}$. Denote by $\bP_n^{(\mH)}$ the distribution of $Z_n$.
\end{model}


Finally, we introduce the following that ensures a monotone coupling of the marginals at different values of $n$ (it is used in Proposition~\ref{pro:recurrence}):

\def \mAcc{7}
\begin{model}{-- \sc Two-step directed tree on $\mathbb N$.} Let $(B_n, n\ge 1)$ be a sequence of independent Bernoulli random variables with parameters $1/n$. For each $n\ge 1$, if $B_{n}=1$ add the red directed edge $(n,n-1)$, and if $B_n=0$, add the black directed edge $(n,n-2)$. Then, for each node of $\mathbb N$, there is a unique directed edge out, and as a consequence a unique directed path to the node $0$, which is the unique sink of the infinite digraph. For $n\ge 1$, let $\bP^{(\mAcc)}_n$ be the distribution of the number of red edges on the unique path between $n$ and $0$. 
\end{model}

All three models above also follow the exact same distribution $\bq_n$ defined on page~\pageref{eq:q01}: 
\begin{theo}\label{theo:other}For any $\ell\in\{\mE,\mH,  \mAcc\}$, any $n\geq 0$
\[{\bf P}_n^{(\ell)}=\bq_n.\]
\end{theo}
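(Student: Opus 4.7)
For each $\ell \in \{\mE, \mH, \mF, \mAcc\}$, the plan is to establish by induction on $n$ that $\bP_n^{(\ell)}$ satisfies the same two-term recurrence \eqref{eq:q02} as $\bq_n$, with the same initial conditions. What changes from one model to the next is the decomposition producing the recurrence.

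Model $\mAcc$ is immediate: by construction, the unique directed path from $n$ to $0$ realizes exactly the Markov chain of \eqref{eq:law_Xn} step by step. Model $\mF$ is nearly as quick: the global minimum of $M$ lies at a uniform position in $\{1, \dots, n\}^2$, hence on the diagonal with probability $1/n$ and off-diagonal with probability $(n-1)/n$. In the first case, removing the row and column of index $i^\star$ leaves an $(n-1) \times (n-1)$ matrix of i.i.d.\ entries and contributes $+1$ to the count; in the second, removing rows $i^\star, j^\star$ and columns $i^\star, j^\star$ leaves an $(n-2) \times (n-2)$ matrix of i.i.d.\ entries and contributes $0$. The inductive hypothesis closes the argument.

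For Model $\mE$ (sorted bullet flock), condition on the position $i_1 \in \{1, \dots, n\}$ of the smallest speed $v_{(1)}$, which is uniform. If $i_1 = n$ (probability $1/n$) then $v_{(1)} < \min L_{n-1}$, so $v_{(1)} \in L_n$ and $|L_n| = 1 + |L_{n-1}|$. If $i_1 < n$ (probability $(n-1)/n$) then at time $i_1$ bullet $v_{(1)}$ is added to $L$ and at time $i_1 + 1$ it is immediately evicted (because $v_{i_1+1} > v_{(1)} = \min L_{i_1}$); the net effect of these two consecutive shots is trivial, and $|L_n|$ coincides with the output of the sorted-flock process applied to the $n-2$ bullets at positions $\{1, \dots, n\} \setminus \{i_1, i_1+1\}$. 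Since $\mu$ is atomless, the relative order of these remaining speeds is uniform, and induction yields $\bq_{n-2}$.

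Model $\mH$ is the most delicate, because the natural decomposition by the length $L$ of the cycle containing $n$ (uniform on $\{1, \dots, n\}$) yields an $n$-term recurrence rather than a two-term one. The cleanest approach is to verify directly that $f_n(y) := \sum_{\sigma \in \fS_n} y^{\#\{\text{odd cycles of }\sigma\}}$ satisfies $f_n(y) = y f_{n-1}(y) + (n-1)^2 f_{n-2}(y)$, which upon dividing by $n!$ is exactly the generating-function version of the $\bq$-recurrence. The case $\sigma(n) = n$ contributes $y f_{n-1}(y)$. For the complementary case, set $a = \sigma^{-1}(n)$ and $b = \sigma(n)$ (both in $\{1, \dots, n-1\}$) and splice out \emph{both} $n$ and $b$: define $\tilde\sigma$ on $\{1, \dots, n\} \setminus \{n, b\}$ by $\tilde\sigma(a) = \sigma(b)$ when $a \neq b$, and $\tilde\sigma = \sigma|_{\{1,\dots,n\} \setminus \{n, b\}}$ when $a = b$. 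This is a bijection onto the $(n-1)^2 (n-2)!$ triples $(a, b, \tilde\sigma)$, and since it shortens the cycle of $n$ by $2$ its parity is preserved, so $\#\{\text{odd cycles of }\tilde\sigma\} = \#\{\text{odd cycles of }\sigma\}$, yielding the contribution $(n-1)^2 f_{n-2}(y)$. The main obstacle is precisely this parity point: splicing out only $n$ (the more obvious move) shortens the cycle by $1$ and flips its parity, producing a sign-alternating contribution that refuses to close into a clean two-step recurrence; removing both $n$ and its image is what restores parity and makes the argument go through.
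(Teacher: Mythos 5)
Your proposal is correct and follows essentially the same route as the paper for all four models: the trivial identification for the two-step tree, the diagonal/off-diagonal split for the matrix model, the removal of the slowest bullet together with its successor for the flock, and, for odd cycles, the splicing-out of a distinguished element \emph{together with its image} so that cycle parity is preserved (the paper does this with the element $1$ and a direct preimage count; your generating-function phrasing with the element $n$ is the same bijection in different clothing). No gaps.
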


The proofs of the three different cases of Theorem~\ref{theo:other} are all straightforward, and presented in Section~\ref{sec:pf_other}. This contrasts with all bullet-related models of Section~\ref{sec:motivation} for which the proof is fairly intricate.

\subsection{Remarks about the case with infinitely many bullets}

Let $\mu$ be a probability distribution on $[0,\infty)$. Let $(V_i,i\geq 0)$ be a sequence of i.i.d.\ speeds with common distribution $\mu$, and consider the corresponding colliding problem with infinitely many bullets: for each integer $i\ge 0$ a bullet is fired at time $i$, that has speed $V_i$. Let $\cS_\infty$ denote the set of indices of the bullets that survive forever (provided it is well-defined, see later on). Consider now the sequence of colliding problems where only the first $n$ bullets are shot, with speeds $V_1,V_2,\dots, V_{n}$; let $\cS_n$ denote the collection of indices of the surviving bullets.

Theorem~\ref{theo:main} implies that, as $n\to\infty$, the number of surviving bullets $|\cS_n|\to \infty$ in probability, whatever the common distribution $\mu$ of the speeds, provided that it has no atom. 

Of course, without any additional element, this does not rule out the possibility that $|\cS_n|$ may vanish infinitely often. In other words, unsurprisingly, the knowledge of the marginal distributions $(\bq_n)_{n\ge 1}$ alone does not allow to conclude about the eventual survival of some bullet. 

Actually, among the models we have presented in Section~\ref{sec:alt_models}, which all have $\bq_n$ as marginals, some vanish infinitely often, while others tend to infinity almost surely:

\begin{pro}\label{pro:recurrence}
Let $(F_n)_{n\ge 1}$ denote the sequence of sizes in the bullets flock model, and let $(D_n)_{n\ge 1}$ denote the sequence of red distances to $0$ in the two-step tree model. Then, with probability one,
\begin{compactenum}[(i)]
    \item $F_n = 0$ infinitely often, and
    \item $D_n \to \infty$; in particular, $D_n =0$ only finitely often.
\end{compactenum}
\end{pro}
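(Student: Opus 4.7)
The plan is to handle parts (ii) and (i) independently, each via a specific structural feature of its dynamics.

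For \textbf{part (ii)}, the obstacle is that $D_n$ itself is not monotone: the two-step recursion gives $D_{n+1}=D_n+1$ when $B_{n+1}=1$ and $D_{n+1}=D_{n-1}$ when $B_{n+1}=0$, so $D_{n+1}$ may well be strictly smaller than $D_n$. The remedy is to introduce the auxiliary sequence $m_n:=\min(D_{n-1},D_n)$, which \emph{is} almost surely non-decreasing: in both of the above cases, $D_{n+1}\ge \min(D_{n-1},D_n)=m_n$, hence $m_{n+1}=\min(D_n,D_{n+1})\ge m_n$. Therefore $m_n\uparrow m_\infty\in[0,+\infty]$ almost surely. A union bound combined with Theorem~\ref{theo:other} (giving $D_n\sim\bq_n$) and Proposition~\ref{pro:limit_dist} (which implies $P(D_n\le M)\to 0$ for every fixed $M$) yields $P(m_n\le M)\le P(D_{n-1}\le M)+P(D_n\le M)\to 0$, so $m_\infty=+\infty$ a.s. Since $D_n\ge m_n$, we conclude $D_n\to\infty$ a.s.

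For \textbf{part (i)}, the crucial observation is that $\{F_n=0\}$, i.e.\ $L_n=\varnothing$, is a regeneration time: conditionally on this event, the process restarts from scratch, driven by the fresh i.i.d.\ speeds $(V_{n+1},V_{n+2},\dots)$. The successive hitting times of $\varnothing$ therefore form a (possibly defective) renewal process. Writing $\rho:=P(\exists n\ge 1: F_n=0)$ for the return probability, the expected total number of visits to $\varnothing$ equals $\rho/(1-\rho)$ if $\rho<1$; on the other hand, by Theorem~\ref{theo:other} it equals $\sum_{n\ge 0}q_n(0)$. Specializing~\eqref{eq:q02} to $k=0$ gives $q_{2n+1}(0)=0$ and $q_{2n}(0)=\prod_{k=1}^n\bigl(1-\tfrac{1}{2k}\bigr)$, whose Wallis-type asymptotics $q_{2n}(0)\sim c/\sqrt{n}$ make the series diverge. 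This forces $\rho=1$; iterating the renewal property then yields infinitely many returns to $\varnothing$ almost surely.

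The main subtle point is really the monotonicity trick in (ii): once the correct auxiliary sequence $m_n$ has been isolated, the almost-sure statement reduces to the marginal information already provided by Theorem~\ref{theo:other} and Proposition~\ref{pro:limit_dist}. Everything else is standard renewal/Borel--Cantelli bookkeeping.
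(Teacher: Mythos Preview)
Your proof is correct. For part~(ii), your monotone auxiliary sequence $m_n=\min(D_{n-1},D_n)$ is essentially the same device as the paper's observation that the two subsequences $(D_{2k})_{k\ge 1}$ and $(D_{2k+1})_{k\ge 0}$ are separately non-decreasing; both reduce the almost-sure statement to the convergence in probability already guaranteed by the marginals via Proposition~\ref{pro:limit_dist}.

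For part~(i), your route is genuinely different from the paper's. The paper analyses the flock dynamics directly: writing $T_x$ for the time needed to destroy the slowest bullet when it has speed $x$, it sets up and solves the integral equation $\E[T_x]=1+x(\E[T_{Ux}]+\E[T_x])$ to obtain $\E[T_x]=1/(1-x)^2<\infty$, whence each return time to $\varnothing$ is almost surely finite. You instead use \emph{only} the marginal information supplied by Theorem~\ref{theo:other}: from the recursion one gets $q_{2n}(0)=\prod_{k=1}^n(1-\tfrac1{2k})\sim c/\sqrt n$, so $\sum_n q_n(0)=\infty$; combined with the regeneration at $\{F_n=0\}$, a standard renewal argument forces the return probability to be~$1$. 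The paper's approach yields the extra quantitative byproduct $\E[T_x]=1/(1-x)^2$, while yours is more economical in that it requires no further analysis of the flock model beyond what was already established for Theorem~\ref{theo:other}.
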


We do not know what happens for the original colliding bullets problem, but at the very least, the sequence $(|\cS_n|)_{n\ge 0}$ exhibits fluctuations that are quite complex. In particular, the fact that $|\cS_n|=0$ does not imply that no bullet from the set $\{1,2,\dots, n\}$ survives forever! Indeed, when bullet $n+1$ is shot, there are three possibilities: it may
\begin{itemize}
  \item be slow enough to avoid all the trajectories of the bullets in $\{1,2,\dots, n\}$, in which case there is one additional surviving bullet,
  \item hit one of the surviving bullets, which would cause the set of surviving bullets to lose one of its elements,
  \item hit some bullet $i\in \{1,\dots, n\}$ that, if not hit, would collide with one of the bullets in $\{1,\dots, i-1\}$, say bullet $j$; the first effect is to release bullet $j$, which may inductively give rise to the same three possibilities.
\end{itemize}
This shows that, for any $n\ge 0$, we have $|\cS_{n+1}|-|\cS_{n}| \in \{+1,-1\}$, but also that  the fluctuations of the sequence $(|\cS_n|)_{n\ge 0}$ are quite complex (see Figure~\ref{fig:dqd} for a simulation), and that the set $\cS_n$ can dramatically change from one step to another. Even showing that if $|\cS_n|=0$ infinitely often then every bullet is eventually destroyed does not seem straightforward. We will leave the question hanging, since we are concerned here with the combinatorics case with only finitely many bullets. 

\begin{figure}[tbp]
\centerline{\includegraphics[scale=0.5]{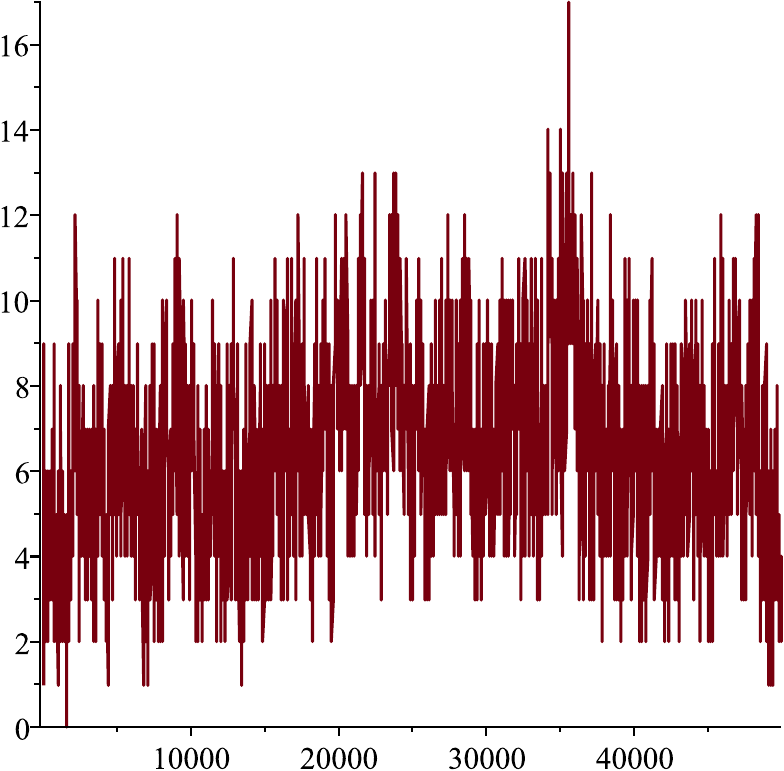}}
\caption{\label{fig:dqd}A simulation of evolution of the number of surviving bullets $(|\cS_j|,1\leq j \leq n)$ for $n=50000$. }
\end{figure} 

Let us just observe that when an infinite number of bullets are shot, the events 
\[\{\textrm{ bullet }i \textrm{ survives }\} 
\qquad \text{and} \qquad
\{~ \exists\, i,  \textrm{ bullet }i \textrm{ survives }\}\]
are measurable. Indeed, a bullet does not survive forever if it is hit before time $m$ for some $m\in \N$; on the other hand, since the speeds are non-negative, the fact that bullet $i\le m$ is hit before time $m$ only depends on the bullets shot before time $m$ (see also Section~\ref{sec:yiluyi}).   

Finally, let us mention that \citet*{DyJuKiRa2016a} have recently proved that in the case where the law of the speeds has support a finite set in $(0,\infty)$, (1) if the first bullet has the second fastest speed then it survives with positive probability (the fastest speed would be obvious), and (2) if it has the slowest speed then it is eventually destroyed with probability one. \citet{SiTo2016a} also have related results. These two papers also contain interesting discussions of the relevant connections with in probability and physics literature. 
To the best of our knowledge, the results in \cite{DyJuKiRa2016a} and \cite{SiTo2016a} are the only non-trivial results on the infinite model that have been proved rigorously.

\begin{rem}Let us mention that the colliding bullet problem has already appeared in the physics literature under the name ``ballistic annihilation''. There compared to the present model, the roles of times and space are exchanged: one sees the bullets as the particles in some gas model; the particles move with constant (random) speeds, initially from different points in space, and the main question is to infer the evolution in the density of particles as times evolves. For more details, see for instance \cite{ElFr1985a,BeRe1993a,KrReLe1995a,BeFe1995a,DrReFrPi1995a,Piasecki1995a,ErToWe1998a,KrSi2001a,Trizac2002b,Trizac2002a}.  
\end{rem}

\subsection{Structure of the proof and plan of the paper} 
\label{sub:sketch_of_the_approach_and_plan_of_the_paper}

A moment thought suffices to see that among Models 1--3 of Section~\ref{sec:motivation}, Model~3 with the fixed parameter $(\bV,\bDelta)$ is the most general; the two others are just annealed versions and the corresponding distributions are obtained by integration. Model~4, with the fixed acceleration function, is actually more general but it may be reduced to Model~3 via coupling and a simple transformation (see Proposition~\ref{pro:auxiliary_models} for details). So from now on, we focus on $\bP_{\bV,\bDelta}^{\mC}$.

Again, the naive induction -- the one relying on the elimination of the slowest or fastest bullet -- fails in general as we have pointed out it Section~\ref{sec:discussion} because one cannot guarantee that a specific bullet collides. There is however a situation in which one can identify a bullet that must either collide or survive: it is when the minimal speed is zero; indeed, in this case, the bullet with minimal speed remains in the barrel so it does survive if it is shot last, and does collide with the next one otherwise. As a consequence, if the minimal speed is null, then there is a simple \emph{one-step reduction} to cases with one or two bullets less. Unfortunately, this trick only works once (at most one speed is zero). But one may try to show that lowering the minimal speed does not alter the distribution of the number of surviving bullets, then one could iteratively bring the minimal speed to zero and thus write down a complete recurrence relation.

We will show that one can indeed alter the parameter $(\bV,\bDelta)$ without changing the distribution of the number of surviving bullets; this will \emph{a posteriori} justify writing $\bP^{\mC}_n$ in place of $\bP_{\bV,\bDelta}^{\mC}$. Granted the independence of $\bP_{\bV,\bDelta}^{\mC}$ from $(\bV,\bDelta)$, it is fairly easy to rigorously devise a recurrence relation for the distribution $\bP_{n}^{\mC}$ and identify it as $\bq_n$, thereby proving Theorem~\ref{theo:main}. We carry on the details in Section~\ref{sub:from_independence_to_the_identification_of_the_law}.

The heart of the argument then consists in justifying the independence of $\bP_{\bV,\bDelta}^{\mC}$ with respect to $(\bV,\bDelta)$. For many values of the parameter $(\bV,\bDelta)$ that we call \emph{generic}, one can indeed slightly lower the minimal speed without affecting the distribution of the number of surviving bullets, for the simple reason that none of the configurations are affected; this is formalized in Lemma~\ref{lem:TCS_open} and relies on a simple topological property. In general however, it is not possible to take the minimal speed to zero without altering any of the configurations. While lowering the minimal speed, one may encounter certain \emph{non-generic} or \emph{singular} values of the parameter $(\bV,\bDelta)$ for which there exist collisions involving more than two bullets. For such a value of the parameter the colliding bullets problem is not well-defined if we consider that bullets only annihilate pairwise. These are values of the parameter at which (at least for some configurations) the pairs of annihilating bullets are modified. Still, in some small neighborhoods of $(\bV,\bDelta)$, the distribution $\bP_{\bV,\bDelta}^{\mC}$ is well-defined and remains constant\footnote{To be precise: the set of generic points restricted to the neighborhood of a singular point is not connected, but $\bP_{\bV,\bDelta}^{\mC}$ is constant on every connected component, and the values all agree.}; because there is no clear one-to-one correspondence between the configurations of these neighborhoods, this invariance necessarily involves a priori intricate averaging. 

The whole argument then reduces to showing that the distribution $\bP^{\mC}_{\bV,\bDelta}$ remains constant when ``crossing'' the non-generic values of the parameter; this is the corner stone of the argument. In order to prove this, we first show that it is ``essentially'' sufficient to consider what happens at the values of the parameter that involve at most triple collisions; these singular values are called \emph{simple}; the formal definition (Definition~\ref{def:simple_singular}) is slightly more restrictive, but this is the idea, and we do not want to blur the big picture at this point. This amounts to showing that it is possible to slightly alter the parameter, without changing the distribution of surviving bullets, in such a way that only simple singular points are encountered when eventually reducing the minimal speed to zero (Lemma~\ref{lem:essentia_generic}). In order to treat the effect of ``crossing'' a simple singular value of the parameter, we proceed by introducing two new combinatorial colliding bullets models with general constraints and by comparing them through an induction argument in Section~\ref{sec:restrictions}.

Turning the sketch we have just presented into a rigorous proof requires to set up a number of geometric representations and formal definitions; these preliminary results are presented in Section~\ref{sec:first_considerations}.


\section{Notation, preliminaries and geometric considerations}
\label{sec:first_considerations} 

\subsection{The virtual space-time diagram}

In this section, we \emph{ignore the effects of collisions} and consider trajectories and events that are only \emph{virtual}. From now on, for $i\ge 1$, we denote by ``bullet $i$'', the $i$th shot bullet, and we assume that the $n$ bullets are shot at some times $t_1<\cdots < t_n$ with respective non-negative speeds $v_1,\cdots,v_n$.  If it is in the air at time $t\in \R$, bullet $i$ lies at a distance to the starting point $x=0$ that is given by
\[\Y_i(t)=v_i(t-t_i).\]
So, ignoring the collisions, each bullet has a \emph{virtual trajectory}; for bullet $i$, it is given by the half-line
\begin{equation}\label{eq:bHL}
\bar \HL\l(v_i,t_i\r):=\{(t,\Y_i(t)), t\geq t_i\}.
\end{equation}
Two bullets $i\ne j$ may only collide at time (which may be positive or negative) 
\ben\label{eq:ff}
T(i,j)=\frac{v_it_i-v_jt_j}{v_i-v_j},
\een
the time at which the lines $\L\l(v_i,t_i\r)$ and $\L\l(v_j,t_j\r)$ supporting respectively $\bar \HL\l(v_i,t_i\r)$ and $\bar \HL\l(v_j,t_j\r)$ intersect.
\begin{figure}[tbp]
\centering 
\includegraphics[width=7.5cm]{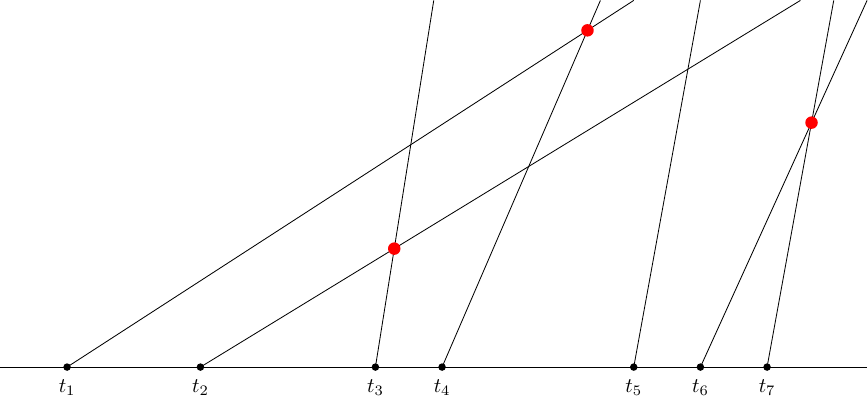}
\hspace{.5cm}
\includegraphics[width=7.5cm]{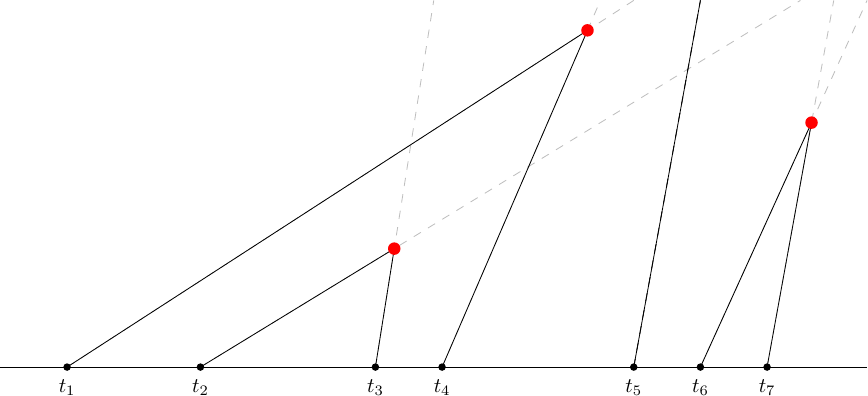}
\caption{\label{fig:space-time}On the left, some instance of a virtual space-time diagram for seven bullets; on the right, the corresponding space-time diagram in solid lines (the virtual space-time diagram is still shown in light dashed lines). The red disks represent the collisions; the collision times are then the first coordinates of the red disks. The 5th bullet survives.}
\end{figure}
We define the \emph{virtual collision time} between bullet $i$ and bullet $j$ by
\ben\label{eq:CT}
\CT(i,j)= \left\{
\begin{array}{ccl} 
  T(i,j) & \textrm{~if~} & T(i,j)\geq \max\{t_i,t_j\}, \\
  +\infty& \textrm{~if~} & T(i,j)< \max\{t_i,t_j\}.
\end{array}
\right.
\een
Even if we ask $T(i,j)$ to be larger than $\max\{t_i,t_j\}$ to ensure that both $i$ and $j$ have been shot, $\CT(i,j)$ is still \emph{virtual}: indeed, \eqref{eq:CT} still ignores the remaining bullets. Even if $T(i,j)<\infty$ there is no guarantee that bullets $i$ and $j$ ever hit each other for one or both may have been destroyed before time $T(i,j)$. The collection of line segments $\{\bar \HL(v_i,t_i)\}_{1\le i\le n}$ is called the \emph{virtual space-time diagram} of the collision process (See Figure~\ref{fig:space-time}).

\subsection{Unambiguous colliding problems and generic parameters}
\label{ssec:genericity}

For $n\ge 1$, let $\Theta_n\subset \R^n_+ \times \R^{n-1}_+$ be the set of couples $(\bV_n,\bDelta_{n-1})$ where $\bV_n=(V_1,\dots, V_n)$ and $\bDelta_{n-1}=(\Delta_1, \dots, \Delta_{n-1})$ for which $0\le V_1 < V_2 < \dots < V_n$ and $0< \Delta_1 \le \Delta_2 \le \dots \le \Delta_{n-1}$. The set $\Theta_n$ will be referred to as the \emph{parameter space} and its elements $(\bV,\bDelta)$ as \emph{parameters}.

A pair $(\sigma,\tau) \in\fS_n\times \fS_{n-1}$ of permutations of the speeds and inter-bullet delays is called a \emph{configuration}. Fix a configuration $(\sigma,\tau)$. Consider
\[\bpar{cccl}
T^{\tau}_j&=&\Delta_{\tau_1}+\cdots+\Delta_{\tau_{j-1}},& ~~\textrm{ for } 1\leq j \leq n,\\
V^{\sigma}_j&=& V_{\sigma_j},                           & ~~\textrm{ for } 1\leq j \leq n.
\epar\]
and, for $t\ge T^\tau_j$,
\ben
\Y_j^{\sigma,\tau}(t)&=& V^\sigma_j(t-T^{\tau}_j)
\een 
the virtual position of bullet $j$ at any time $t\geq T^{\tau}_j$. Let $\CT^{\sigma,\tau}(i,j)$ be the virtual collision time of bullets $i$ and $j$ in configuration $(\sigma,\tau)$; if $\CT^{\sigma,\tau}(i,j)<\infty$ then denote by 
\ben \label{eq:hryj}
M_{\sigma,\tau}^{\bV,\bDelta}(i,j):=\bar\HL\l(V^{\sigma}_i, T_i^\tau\r) \cap \bar \HL\l(V^\sigma_j, T_j^\tau\r)
\een
the corresponding virtual collision point (in space-time) of bullets $i$ and $j$ in the configuration $(\sigma,\tau)$; if $\CT^{\sigma,\tau}(i,j)=\infty$ then the half-lines do not intersect and we set $M_{\sigma,\tau}^{\bV,\bDelta}(i,j)=\varnothing$. Here, all the quantities are still \emph{virtual}, since they are defined independently from the action of the other bullets. 

\begin{defi}[Generic parameter]\label{def:gene}
We say that the parameter $(\bV,\bDelta)\in \Theta_n$ is \emph{generic} if for each fixed configuration $(\sigma,\tau)\in \fS_n\times \fS_{n-1}$, and for any $1\leq i <j <k\leq n$, we have
\ben\label{eq:simple-critic}
\bar \HL\l(V^{\sigma}_i, T_i^\tau\r) \cap \bar \HL\l(V^{\sigma}_j, T_j^\tau\r)\cap \bar \HL\l(V^{\sigma}_k, T_k^\tau\r) =\varnothing.
\een 
Let $\cG_n$ denote the subset of $\Theta_n$ consisting of all generic parameters.
\end{defi}

A parameter is generic if for every configuration no three virtual trajectories ever intersect at the same point. Note that being generic requires more than ``no three bullets meet simultaneously at the same place'', since the constraints hold on \emph{virtual} trajectories. The fact that $(\bV,\bDelta)$ is generic is a sufficient condition for the probability distribution $\bP^{\mC}_{\bV,\bDelta}$ to be defined unambiguously.

\subsection{The set of surviving bullets and the space-time diagram}
\label{sec:yiluyi}

Fix $(\bV, \bDelta)=(\bV_n, \bDelta_{n-1})\in \cG_n$ and a configuration $(\sigma,\tau)$. Then the collection of indices of the bullets that indeed survive is determined by the following simple algorithm. For the sake of readability, we now drop the references to $(\bV,\bDelta)$ and $(\sigma,\tau)$ and suppose that the bullets are shot at times $t_1<t_2<\cdots < t_n$ and have speeds $v_1,v_2,\dots, v_n$.

For each time $t\ge 0$, the algorithm computes the set $\cS_t\subset \{1,2,\dots, n\}$ of bullets that either have not yet been shot, or that are still in the air at time $t$. Initially, we have $\cS_0=\{1,2,\dots, n\}$. Suppose that we have computed $\cS_t$ for $t\le T$. 
If there is a collision between the bullets in $\cS_T$ at some time $t>T$, then a collision occurs at time
\[T^+=\min\{\CT(i,j): i,j\in \cS_T\}.\]
Depending on $T^+$, proceed as follows:
\begin{itemize}
  \item if $T^+=\infty$ there is no collision after time $T$ and therefore $\cS_t=\cS_T$ for all $t\ge T$;
  \item if $T^+<\infty$, then $T^+ = \CT(i_1,j_1)>T$ for some pair $(i_1,j_1)$ of elements of $\cS_T$; this pair is not necessarily unique, but since the parameter is generic, every bullet is involved in at most one colliding pair. Writing $(i_\ell,j_\ell)$, $1\le \ell\le p$ for the pairs for which $\CT(i_\ell,j_\ell)=T^+$, one then has $\cS_{T^+}=\cS_T\setminus \{i_\ell,j_\ell: 1\le \ell\le p\}$ and $\cS_t=\cS_T$ for all $t\in [T,T^+)$.
\end{itemize}
One can thus compute $\cS_t$ for all $t\ge 0$. Since $\cS_t$ is eventually constant, the set of surviving bullets is then $\cS=\lim_{t\to\infty} \cS_t$. The procedure actually computes the time of death $\partial_i$ of each bullet $i\in [n]$. Instead of the completed half-line  $\bar \HL(v_i,t_i)$ one can then consider its \emph{true trajectory} $\HL(v_i,t_i)$ which is the line segment
\begin{equation}\label{eq:HL}
\HL(v_i,t_i)=\{(t,Y_i(t)): t\in [t_i, \partial_i]\}.
\end{equation}
The collection of trajectories $\{\HL(v_i,t_i)\}_{1\le i\le n}$ is called the \emph{space-time diagram}. See the simulations in Fig.~\ref{fig:simu5000}.

\begin{rem}When it is clear from the context which speeds and shooting times we are talking about, we sometimes refer to $\HL(v_i,t_i)$ and $\bar \HL(v_i,t_i)$ as $\HL_i$ and $\bar \HL_i$.
\end{rem}

\begin{figure}[tbp]
\centering
\includegraphics[scale=.45]{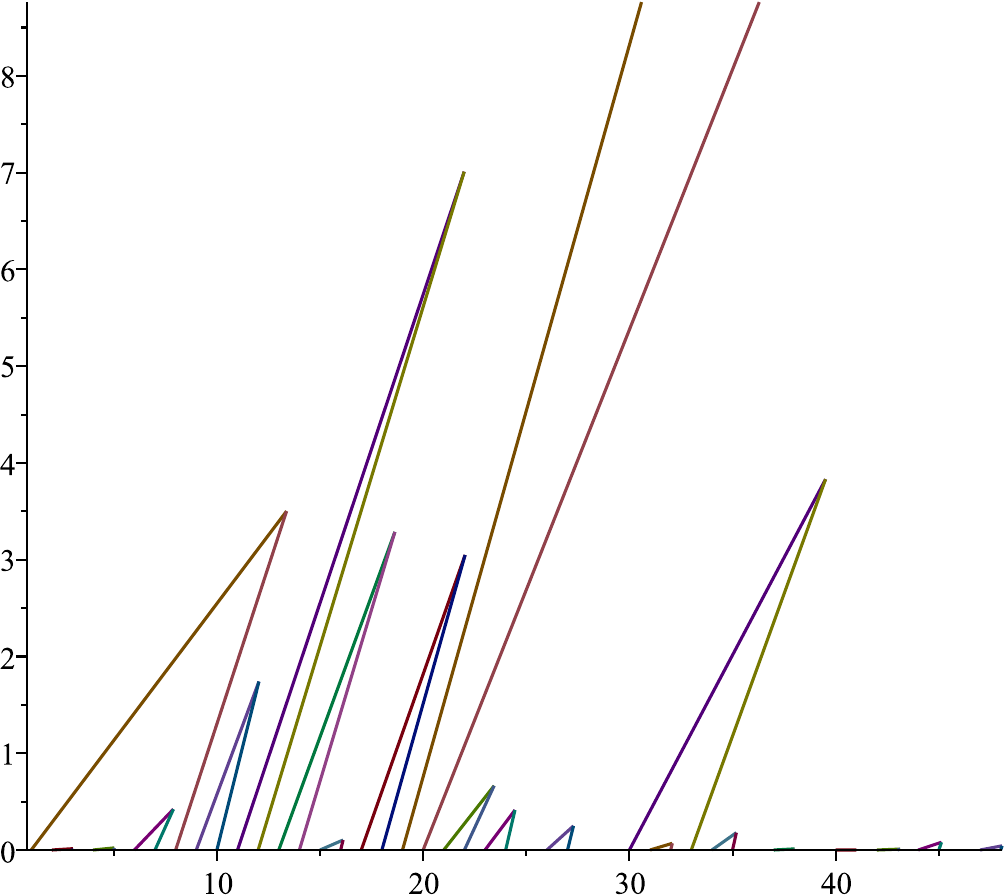}
\includegraphics[scale=.45]{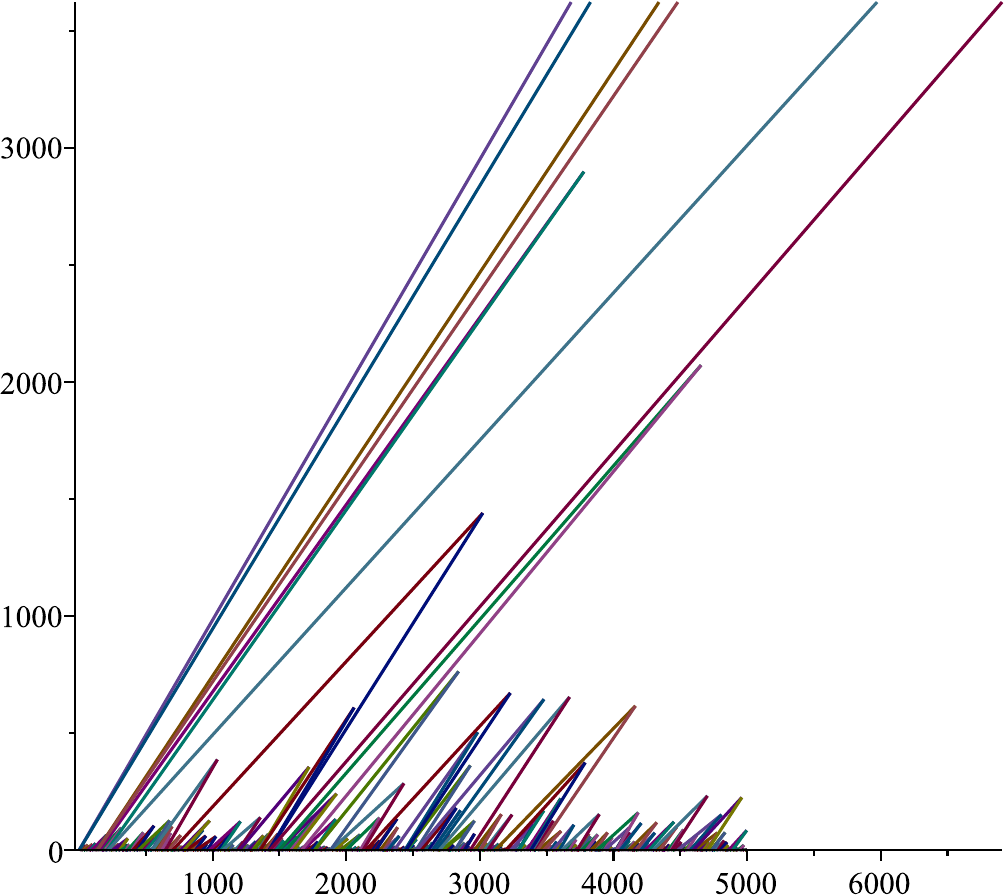}
\caption{\label{fig:simu5000}  Simulations of the space-time diagram in the colliding bullets problem with $n$ bullets, shot with unit delays, with independent uniform random speeds on $[0,1]$. On the left-hand side picture $n=50$, on the right-hand side $n=5000$. }
\end{figure}

\subsection{The topological collision scheme} 
\label{sub:the_topological_collision_scheme}

Fix $(\bV,\bDelta)\in \cG_n$, so that for any configuration $(\sigma,\tau)\in \fS_n\times \fS_{n-1}$, the half-lines $\bar \HL(V^\sigma_i, T_i^\tau)$, $1\le i\le n$, only intersect pairwise. Recall the definition of $M_{\sigma,\tau}^{\bV,\bDelta}(i,j)$ in \eref{eq:hryj} and recall that $\L\l(V^\sigma_k, T_k^\tau\r)$ denotes the line which contains $\bar \HL(V^\sigma_k, T_k^\tau)$.

Each line $\L(V^\sigma_k,T^\tau_k)$ splits $\R^2$ into two open half-spaces; since $V^\sigma_k\in [0,\infty)$, there is a natural labelling each one of these half-spaces as \emph{above} and $\emph{below}$, depending on whether it contains every point $(0,y)$ or $(0,-y)$ for all large enough $y$, respectively. Furthermore, since lines only intersect pairwise, for any triple of distinct integers $(i,j,k)$, the point $M_{\sigma,\tau}^{\bV,\bDelta}(i,j)$, when it exists, lies either above or below the line $\L(V^\sigma_k, T_k^\tau)$.

In the following, we write 
\[\l[n\atop 3\r]^\bullet= 
\left\{
\begin{array}{ll}
\{(i,j,k)\in \{1,\cdots,n\}^3,i<j, j\neq k,i\neq k\} & \text{if } n\ge 3\\
\{(1,2,*)\}& \text{if }n=2\,.
\end{array}
\right.
\]

\begin{defi}[Topological colliding scheme]Let $(\bV,\bDelta)\in \cG_n$. The \emph{topological colliding scheme (TCS, for short) of $(\bV,\bDelta)$} is the function $\Gamma_{\bV,\bDelta}$ defined as follows. 
\begin{enumerate}[(i)]
  \item If $n\ge 3$, then
\ben
\app{\Gamma_{\bV,\bDelta}}
{\fS_n \times \fS_{n-1} \times  \l[n\atop3\r]^\bullet}
{\{-1,0,+1\}^{n!\times (n-1)! \times \binom{n}3}}
{(\sigma
,\tau,i,j,k)}{
\left\{\begin{array}{rl} 
0 & \textrm{if }M_{\sigma,\tau}^{\bV,\bDelta}(i,j) = \varnothing\\
+1 &\textrm{if }M_{\sigma,\tau}^{\bV,\bDelta}(i,j) \textrm{ is above } \L\l(V^\sigma_k, T_k^\tau\r)\\         
-1 & \textrm{if }M_{\sigma,\tau}^{\bV,\bDelta}(i,j) \textrm{ is below } \L\l(V^\sigma_k, T_k^\tau\r)\,,
\end{array}\right.}
\een
\item If $n=2$, then 
$$\Gamma_{\bV,\bDelta}(\sigma,\tau, 1,2,*)=
\left\{
\begin{array}{rl}
0 & \textrm{if }M_{\sigma,\tau}^{\bV,\bDelta}(1,2) = \varnothing\\
1 & \textrm{otherwise.}
\end{array}
\right.$$
\end{enumerate}
\end{defi}

The topological colliding scheme is very similar to order types in geometry. Its importance relies in the (obvious?) fact that for every configuration, it determines indices of the surviving bullets. This is straightforward from the following lemma. 

\begin{lem}\label{lem:tcs}Let $(\bV,\bDelta)\in \cG_n$. For any $(\sigma,\tau)\in \fS_n \times \fS_{n-1}$, the set $\cS(\bV^\sigma,\bDelta^\tau)$ of indices of the surviving bullets in the configuration $(\sigma,\tau)$ is fully determined by the map $\Gamma_{\bV,\bDelta}(\sigma,\tau, \cdot,\cdot,\cdot)$.
\end{lem}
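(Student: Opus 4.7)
My plan is to exhibit an explicit procedure that, using only the TCS values $\Gamma_{\bV,\bDelta}(\sigma,\tau,\cdot,\cdot,\cdot)$ together with the bullet indices (which encode the shooting order), computes $\cS(\bV^\sigma,\bDelta^\tau)$; this immediately implies the claim. The procedure will simulate the earliest-first algorithm of Section~\ref{sec:yiluyi} by iteratively removing ``mutual pairs'' of bullets, where mutuality is a TCS-determined relation.

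First I would show that the TCS determines the order in which virtual collisions happen along each bullet's trajectory. Fix a bullet $i$ and two further bullets $j,k$ such that $\CT(i,j)$ and $\CT(i,k)$ are finite. Then $\CT(i,j)<\CT(i,k)$ if and only if $M_{\sigma,\tau}^{\bV,\bDelta}(i,j)$ is encountered before $M_{\sigma,\tau}^{\bV,\bDelta}(i,k)$ when $\bar\HL(V^\sigma_i,T_i^\tau)$ is traversed from its starting endpoint $(T_i^\tau,0)$. Since $\bar\HL_i$ crosses $\L(V^\sigma_k,T_k^\tau)$ exactly once, at $M^{\bV,\bDelta}_{\sigma,\tau}(i,k)$, and switches side there, $M^{\bV,\bDelta}_{\sigma,\tau}(i,j)$ is encountered first precisely when it lies on the same side of $\L(V^\sigma_k,T_k^\tau)$ as the starting endpoint. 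The side of $(T_i^\tau,0)$ is determined by the shooting order: speeds being non-negative, $(T_i^\tau,0)$ lies above $\L(V^\sigma_k,T_k^\tau)$ iff $T_i^\tau<T_k^\tau$, i.e.\ iff $i<k$. The side of $M^{\bV,\bDelta}_{\sigma,\tau}(i,j)$ is exactly the sign $\Gamma_{\bV,\bDelta}(\sigma,\tau,\min\{i,j\},\max\{i,j\},k)\in\{-1,+1\}$. Hence, for any $A\subseteq\{1,\dots,n\}$ and any $i\in A$, the TCS pins down the nearest collider
\[\mathrm{next}_A(i):=\argmin_{j\in A\setminus\{i\},\ \CT(i,j)<\infty}\CT(i,j),\]
with the convention that $\mathrm{next}_A(i)=\perp$ when the indexing set is empty.

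With this in hand, I would call $\{i,j\}\subseteq A$ a \emph{mutual pair} when $\mathrm{next}_A(i)=j$ and $\mathrm{next}_A(j)=i$, and run the following algorithm: start with $A=\{1,\dots,n\}$; while $A$ contains a mutual pair, remove any such pair from $A$; return $A$. To prove this returns $\cS(\bV^\sigma,\bDelta^\tau)$, I would rely on three observations about the earliest-first algorithm of Section~\ref{sec:yiluyi}. \emph{(a) Existence:} any pair in $A$ minimizing $\CT(\cdot,\cdot)$ is automatically mutual, so the mutual-pair procedure halts exactly when the earliest-first algorithm halts. \emph{(b) Preservation:} if $\{i,j\}\subseteq A'\subseteq A$ and $\{i,j\}$ is mutual in $A$, then $\{i,j\}$ is still mutual in $A'$, since shrinking the candidate set can only delay $\mathrm{next}(\cdot)$. \emph{(c) Exchange:} by (b), as long as both $i$ and $j$ are alive during an earliest-first run, $\mathrm{next}(i)=j$ is preserved, so $i$ can never be paired with any other bullet; hence the earliest-first algorithm necessarily removes every mutual pair as a pair, and these removals commute with every other. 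A straightforward induction on $|A|$ then shows that any legal execution of the mutual-pair algorithm produces the same surviving set as the earliest-first algorithm.

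The delicate step is the exchange property (c): a priori, eliminating a mutual pair out of time order could disrupt other collisions the earliest-first algorithm would have scheduled in between. Preservation (b) is precisely what rules this out, as a mutual pair in $A$ remains mutual in every descendant subset and therefore cannot be ``stolen'' by an intervening collision. Once (a)--(c) are in place, the output of the mutual-pair procedure is well-defined irrespective of the non-deterministic choice at each step, depends only on the TCS, and coincides with $\cS(\bV^\sigma,\bDelta^\tau)$.
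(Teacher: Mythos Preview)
Your proof is correct and follows a genuinely different route from the paper's. The paper argues by induction on $n$, tracking bullet $1$: it locates the minimal $J$ for which $M(1,J)$ lies above every other line, then recursively determines (inside $\{2,\dots,J\}$) whether bullet $J$ actually reaches that point, and branches accordingly. Your argument instead extracts from the TCS the total order of the virtual collision points along \emph{each} individual half-line, defines the purely combinatorial ``mutual pair'' relation, and proves a confluence property: removing mutual pairs in any order yields the same terminal set, which coincides with the output of the earliest-first algorithm. What your approach buys is a clean order-independent description of the surviving set (no mutual pair remains) and a proof that avoids the somewhat delicate case analysis on bullet $1$; what the paper's approach buys is a direct tie to the geometry that is reused later (the same ``find the topmost intersection with bullet $1$'' idea recurs in Section~\ref{sec:restrictions}).

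One small technical point deserves a sentence in your write-up. Your claim that $(T_i^\tau,0)$ lies above $\L_k$ iff $i<k$ uses $V_k^\sigma>0$; when $V_k^\sigma=0$ the line $\L_k$ is the horizontal axis and the starting point sits \emph{on} it. This is harmless: at most one speed vanishes, so if $V_k^\sigma=0$ you can compare $\CT(i,j)$ and $\CT(i,k)$ via $\L_j$ instead (or observe directly that $\CT(i,k)=T_i^\tau$ is then the minimum of all $\CT(i,\cdot)$). But as written the sentence is not literally correct in that boundary case.
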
 
\begin{proof}We proceed by induction on the number of bullets $n$. If $n\in \{0,1\}$, then there is nothing to prove; if $n=2$, for every permutations $\sigma$ and $\tau$, the point $M^{\bV,\bDelta}_{\sigma,\tau}(i,j)$ exists precisely if $\Gamma_{\bV,\bDelta}(\sigma,\tau, i, j, k)=1$, and hence the information is contained in the restricted map $\Gamma_{\bV,\bDelta}(\sigma,\tau, \cdot, \cdot,\cdot)$ whether the two bullets do collide or not ($\bDelta$ has no influence).

Suppose now that the property holds for up to $n-1$ bullets, and any generic pair $(\bV,\bDelta)$. 
Fix a pair of permutations $(\sigma,\tau)\in \fS_{n}\times \fS_{n-1}$.
Observe first that considering a subset of the trajectories $\HL_i$, $i\in [n]$, does correspond to looking at some TCS for a speed vector consisting of the speeds of the selected bullets, and a delay vector containing aggregate delays; if $(\bV,\bDelta)$ is generic, so are any of the vectors obtained by taking subsets of the bullet trajectories. 
There are two possibilities: 
\begin{itemize}
  \item if $\Gamma_{\bV,\bDelta}(\sigma,\tau, 1, \cdot, \cdot )=0$ then $\HL_1$ does not intersect any of other $\HL_i$. In this case $1\in \cS_{\bV^\sigma,\bDelta^\tau}$ and the remaining surviving bullets are determined by the map induced by $\Gamma_{\bV,\bDelta}(\sigma,\tau, \cdot,\cdot,\cdot)$ on the set $\begin{bmatrix}B\atop3\end{bmatrix}^\bullet$ where $B=\{2,\dots, n\}$. 
  \item otherwise, there exists some $j$ such that $\Gamma_{\bV,\bDelta}(\sigma,\tau,1,j, \cdot)$ is not identically zero. Then, let $J\in \{2,3,\dots, n\}$ be minimal such that $\Gamma_{\bV,\bDelta}(\sigma,\tau, 1,J, k)=1$, for all $k\not\in \{1,J\}$ (so that $M^{\bV,\bDelta}_{\sigma,\tau}(1,J)$ lies above all lines $\L_i$, $i\in \{2,\dots, n\}\setminus \{J\}$). By induction, we can determine whether bullet $J$ survives when removing bullets with indices in the set $\{1, J+1,J+2,\dots, n\}$ by looking at the map $\Gamma_{\bV,\bDelta}(\sigma,\tau, \cdot,\cdot,\cdot)$ on the set $\begin{bmatrix}B\atop3\end{bmatrix}^\bullet$ where now $B=\{2,\dots, J\}$. With this information in hand: 
  \begin{itemize}
    \item If bullet $J$ does survive in this smaller colliding scheme, then bullets $1$ and $J$ do collide in the original scheme. Additionally, bullets $2,3,\dots, J-1$ all annihilate and none of the trajectories genuinely crosses $\HL_J$. Another induction yields the indices of the surviving bullets lying in $\{J+1,\dots, n\}$ by looking at $\Gamma_{\bV,\bDelta}(\sigma,\tau, \cdot,\cdot,\cdot)$ on the set $\begin{bmatrix}B\atop3\end{bmatrix}^\bullet$ with $B=\{J+1,\dots, n\}$.
    \item If bullet $J$ does not survive, it collides with another one (whose index is given by the induction); removing both, we can again use induction to determine the remaining surviving bullets.
  \end{itemize}
\end{itemize}
It follows that the set of indices of the surviving bullets $\cS(\bV^\sigma,\bDelta^\tau)$ is a function of the map $\Gamma_{\bV,\bDelta}(\sigma,\tau, \cdot,\cdot, \cdot)$.
\end{proof}

The following simple observation will also be useful:
\begin{lem}\label{lem:TCS_open}
  The map $\Gamma: (\bV,\bDelta)\mapsto \Gamma_{\bV,\bDelta}$ is locally constant in $\cG_n$: for $(\bV,\bDelta)\in \cG_n$, there exists an open neighborhood ${\cal O}$ of $(\bV,\bDelta)$ in $\R^n\times \R^{n-1}$ such that ${\cal O}\subset {\cal G}_n$,
 and
\ben\label{eq:fdef}
\Gamma_{\bV,\bDelta}=\Gamma_{\bV',\bDelta'}\qquad \text{for all } (\bV',\bDelta')\in \cal O.
\een
\end{lem}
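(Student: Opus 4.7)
The plan is to reduce local constancy of $\Gamma$ to the local constancy of finitely many strict sign conditions on continuous functions of $(\bV,\bDelta)$. For each fixed tuple $(\sigma,\tau,i,j,k)$ with, WLOG, $T^\tau_i<T^\tau_j$, the value $\Gamma_{\bV,\bDelta}(\sigma,\tau,i,j,k)$ is determined by two signs: \textbf{(a)} the sign of $V^\sigma_j-V^\sigma_i$, which controls whether $M^{\bV,\bDelta}_{\sigma,\tau}(i,j)$ exists (the later-shot bullet virtually catches the earlier one precisely when it is faster, and in the boundary case $V^\sigma_i=0$ the ``intersection'' is the shooting point $(T^\tau_j,0)$); \textbf{(b)} if $M^{\bV,\bDelta}_{\sigma,\tau}(i,j)$ exists, the sign of the signed deviation of this point from $\L(V^\sigma_k,T^\tau_k)$, an affine form evaluated at the point computed from \eref{eq:ff}--\eref{eq:hryj}. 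Both quantities are rational, and hence continuous, functions of $(\bV,\bDelta)$ on the open set $\{V^\sigma_i\ne V^\sigma_j\}$.

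At any $(\bV,\bDelta)\in\cG_n$, both sign conditions are \emph{strict}: (a) because the ordering $V_1<\cdots<V_n$ forces $V^\sigma_j\ne V^\sigma_i$, and (b) because \eref{eq:simple-critic} is precisely the statement that $M^{\bV,\bDelta}_{\sigma,\tau}(i,j)$ does not lie on $\L(V^\sigma_k,T^\tau_k)$, so the signed deviation is nonzero. By continuity, each of these finitely many strict inequalities persists on an open neighborhood of $(\bV,\bDelta)$ in $\R^n\times\R^{n-1}$, and the intersection $\O$ of these neighborhoods (over all $(\sigma,\tau,i,j,k)$) is itself open. On $\O$ the function $\Gamma$ agrees pointwise with $\Gamma_{\bV,\bDelta}$, and the nonvanishing of the signed deviations from (b) is precisely the condition that no three half-lines concur, so $\O\subset\cG_n$ and \eref{eq:fdef} holds.

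The main (rather mild) obstacle is the edge case $V_1=0$: if $\sigma$ places the zero-speed bullet at a position $i$ with $T^\tau_i<T^\tau_j$, then $T^{\sigma,\tau}(i,j)=T^\tau_j$ exactly and the inequality underlying (a) is only weakly satisfied. This is harmless because $M^{\bV,\bDelta}_{\sigma,\tau}(i,j)=(T^\tau_j,0)$ is nonetheless a well-defined point depending continuously on the parameter, and \eref{eq:simple-critic} keeps it strictly off $\L(V^\sigma_k,T^\tau_k)$. Since the only sign that genuinely encodes the combinatorial value $\Gamma_{\bV,\bDelta}(\sigma,\tau,i,j,k)$ is then the strict one in (b), local constancy of $\Gamma$ is preserved in a full open neighborhood, as required.
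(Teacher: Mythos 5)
The paper states this lemma without proof (it is introduced as ``the following simple observation''), so there is no argument of the authors' to compare yours against; what you propose is the natural justification and it is essentially correct: each value $\Gamma_{\bV,\bDelta}(\sigma,\tau,i,j,k)$ is encoded by finitely many strict sign conditions on continuous (rational) functions of the parameter, and strict inequalities persist on an open set. Two details deserve to be made explicit. First, genericity \eref{eq:simple-critic} literally says $M^{\bV,\bDelta}_{\sigma,\tau}(i,j)\notin\bar\HL\l(V^\sigma_k,T^\tau_k\r)$, i.e.\ off the \emph{half-line}, whereas the nonvanishing of your signed deviation requires $M^{\bV,\bDelta}_{\sigma,\tau}(i,j)\notin\L\l(V^\sigma_k,T^\tau_k\r)$, i.e.\ off the full \emph{line}. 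These are not ``precisely'' the same statement; they coincide here only because $M^{\bV,\bDelta}_{\sigma,\tau}(i,j)$ lies in the closed upper half-plane while $\L\l(V^\sigma_k,T^\tau_k\r)\setminus\bar\HL\l(V^\sigma_k,T^\tau_k\r)$ lies strictly below it when $V^\sigma_k>0$ (the case $V^\sigma_k=0$ being checked separately, since then $V^\sigma_i,V^\sigma_j>0$ force the intersection point strictly above the axis). This one-line verification should be recorded, as without it the sign in (b) could a priori vanish at a generic parameter. Second, your treatment of the edge case $V_1=0$ is only one-sided: perturbing the zero speed to a \emph{negative} value makes $M^{\bV,\bDelta}_{\sigma,\tau}(i,j)$ disappear (the virtual collision time drops strictly below $T^\tau_j$), so $\Gamma$ jumps from $\pm1$ to $0$, and no neighborhood that is open in $\R^n\times\R^{n-1}$ can then be contained in $\cG_n\subset\Theta_n$. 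The neighborhood must be taken relatively open in $\R^n_+\times\R^{n-1}_+$, which is how the lemma is actually invoked later (e.g.\ in the proof of Lemma~\ref{lem:essentia_generic}); this is an imprecision in the paper's statement rather than a flaw in your argument, but your closing phrase ``full open neighborhood'' should be weakened accordingly.
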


\subsection{Singular parameters and critical patterns}
\label{ssec:singular-critical} 

A parameter $(\bV_n,\bDelta_{n-1})$ in $\Theta_n\setminus \cG_n$ is called \emph{singular}. The parameter $(\bV_n, \bDelta_{n-1})$ is singular if and only it contains a critical pattern, or critical bi-triangle in the following sense: 
\begin{defi}[Critical pattern]A \emph{critical pattern} or \emph{critical bi-triangle} with respect to some parameter $(\bV,\bDelta)$ is a tuple $(v_m, v_\ell,v_r, d_\ell,d_r)$ such that
\begin{compactenum}[(a)]
  \item $v_m$, $v_\ell$ and $v_r$ are three distinct speeds from $\bV$,
  \item $d_\ell$ and $d_r$ are the sums of components of $\bDelta$ over two disjoint sets of indices, and
  \item the three half-lines $\bar \HL(v_{m},0)$, $\bar \HL(v_\ell,d_\ell)$, and $\bar \HL(v_r,d_\ell+d_r)$ are concurrent (see Figure~\ref{fig:FDN}). 
\end{compactenum}
\end{defi}

Given $(\bV,\bDelta)$, a configuration $(\sigma,\tau)$ is said to \emph{contain the critical pattern} $\pi=(v_m,v_l,v_r, d_l,d_r)$ if there exists $(i,j,k)$ such that $V^\sigma_i=v_m$, $V^\sigma_j=v_\ell$, $V^\sigma_k=v_r$ and furthermore
\be
\bpar{ccl}
T^\tau_j-T^\tau_i=\dis\sum_{p=i}^{j-1} \Delta^\tau_p =d_\ell\\
T^\tau_k-T^\tau_j=\dis\sum_{p=j}^{k-1} \Delta^\tau_p =d_r;
\epar
\ee
we let $\cal C_\pi=\cal C_\pi(\bV,\bDelta)$ denote the set of configurations that contain $\pi$. For a given configuration $(\sigma,\tau)\in \cal C_\pi$ containing a given critical pattern $\pi=(v_m, v_\ell,v_r,d_\ell,d_r)$, { let $\rho_\pi\ge 0$ be such that the common intersection point of $\bar \HL(v_{m},0)$, $\bar \HL(v_\ell,d_\ell)$, and $\bar \HL(v_r,d_\ell+d_r)$ has coordinates $(t,\rho_\pi)$, for some $t\ge 0$\footnote{So $\rho_\pi$ is the (spatial) distance from the origin at which the lines intersect; this is independent of the potential time shift that the pattern may have in specific configuration.}.
 The pattern is called \emph{realized} by $(\sigma,\tau)$ if it is also contained in the diagram involving the actual trajectories $\HL(V^\sigma_i, T^\tau_i)$; this means that for $p\in \{i,j,k\}$, the actual trajectory $\HL(V^\sigma_p,T^\tau_p)$ contains the line segment $\bar \HL(V^\sigma_p,T^\tau_p)\cap [0,\infty) \times [0,\rho_\pi)$ (meaning that the portions of the half-lines before the triple collision point are not intersected, see Figure~\ref{fig:FDN}); we let $\cal R_\pi=\cal R_\pi(\bV,\bDelta)$ be the set of configurations for which $\pi$ is realized. Finally, we say that a critical pattern $\pi=(v_m,v_\ell,v_r, d_\ell, d_r)$ is \emph{minimal} if both $d_\ell$ and $d_r$ correspond to a single component of $\bDelta$ (meaning that the three bullets with speed $v_m, v_r$ and $v_\ell$ are shot consecutively).

{While a given speed $v$ may be involved in multiple critical patterns, there may not be a single configuration that contains multiple critical patterns. For this reason, it is useful to keep track of which configurations contain a given critical pattern: a pattern is called $(\sigma,\tau)$-critical if it is critical for $(\sigma,\tau)$.}

\begin{defi}[Simple singular parameter]\label{def:simple_singular}
A singular parameter $(\bV_n,\bDelta_{n-1})$ is called \emph{simple}, if for every configuration $(\sigma,\tau)$, every speed $v$ of $\bV_n$ is involved in at most one $(\sigma,\tau)$-critical pattern. 
In other words, for each $(\sigma,\tau)$, each line  $\HL(V^{\sigma}_m, T_m^\tau)$ participates in at most one critical bi-triangle; in particular, the collisions involve at most three bullets.
\end{defi}

For a speed vector $\bV_n=(V_1,V_2,\dots, V_n) \in \R^n_+$, let $\bV_n^{\downarrow}$ be the vector $(0,V_2,\dots, V_n)$ where the minimal speed has been put to zero.

\begin{defi}[Essentially generic]\label{def:essentially_generic}
A parameter $(\bV_n,\bDelta_{n-1})\in \cG_n$ is called \emph{essentially generic} if, for any convex combination $\bV_n'$ of $\bV_n$ and $\bV^\downarrow_n$, that is $\bV_n'=(\lambda V_1,V_2,\dots, V_n)$ for some some $\lambda \in [0,1]$, the parameter $(\bV'_n,\bDelta_{n-1})$ is either generic or simple singular.
\end{defi}


When $(\bV,\bDelta)$ is essentially generic, every critical pattern with respect to a convex combination $(\bV',\bDelta)$ of $(\bV,\bDelta)$ and $(\bV^\downarrow,\bDelta)$ must involve the minimal speed: it must be of the form $(\min \bV',v_\ell, v_r,d_\ell, d_r)$. {The following crucial ``density lemma'' allows us to focus only on essentially generic parameters:
\begin{lem}\label{lem:essentia_generic}
For any $(\bV,\bDelta)\in \cG_n$, there exists an essentially generic parameter $(\bV',\bDelta')\in \cG_n$ such that the TCS of $(\bV, \bDelta)$ and $(\bV',\bDelta')$ are identical, \emph{i.e.}, $\Gamma_{\bV,\bDelta} = \Gamma_{\bV',\bDelta'}$.
\end{lem}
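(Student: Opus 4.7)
The plan is to combine the local constancy of the TCS provided by Lemma~\ref{lem:TCS_open} with a short dimension-count argument, showing that within any neighborhood of a generic parameter, almost every parameter is essentially generic. Starting from $(\bV,\bDelta)\in \cG_n$, I intend to exhibit a nearby $(\bV',\bDelta')$ whose line-segment interpolation to its own zeroed version only ever crosses simple singular points of $\Theta_n$, and which shares the TCS of $(\bV,\bDelta)$.

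The first step is to stratify the singular locus $\Theta_n\setminus \cG_n$. For each configuration $(\sigma,\tau)\in \fS_n\times\fS_{n-1}$ and each triple $\{i,j,k\}$ of distinct bullet indices, the condition that the three virtual lines $\L(V^\sigma_i,T^\tau_i)$, $\L(V^\sigma_j,T^\tau_j)$, $\L(V^\sigma_k,T^\tau_k)$ be concurrent is a single non-trivial rational equation in $(\bV,\bDelta)$, and therefore defines a hypersurface $H_{\sigma,\tau,i,j,k}$ of codimension $1$. A singular parameter fails to be simple singular precisely when there is a configuration $(\sigma,\tau)$ and two distinct triples $\{i,j,k\}\neq\{i',j',k'\}$ with non-empty intersection, both satisfying the corresponding concurrency condition. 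Up to relabeling this covers only two situations: four virtual half-lines of the same configuration pass through a common point (when the two triples share two indices), or a single line participates in two distinct triple concurrences (when the triples share exactly one index). In both cases I would verify that the two defining equations are generically independent, so that the \emph{non-simple singular locus}, which I denote $\mathcal N$, has codimension at least $2$ in $\Theta_n$.

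I would next introduce the evaluation map
$$\varphi:[0,1]\times\Theta_n\longrightarrow \Theta_n,\qquad \varphi(s,\bV,\bDelta)=\bigl((1-s)V_1,\, V_2,\dots, V_n,\, \bDelta\bigr),$$
and observe that $(\bV,\bDelta)\in \cG_n$ fails to be essentially generic if and only if the segment $\{\varphi(s,\bV,\bDelta):s\in[0,1]\}$ meets $\mathcal N$. Thus the bad set $\mathcal B$ of non-essentially-generic parameters equals the projection onto the second factor of $\varphi^{-1}(\mathcal N)$. Since the fibers of $\varphi$ are one-dimensional and $\dim \mathcal N\le 2n-3$, one obtains $\dim\varphi^{-1}(\mathcal N)\le 2n-2$, and hence $\mathcal B$ has codimension at least one in $\Theta_n$. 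Applying Lemma~\ref{lem:TCS_open}, I pick an open neighborhood $\O\subset \cG_n$ of $(\bV,\bDelta)$ on which $\Gamma$ is constant; since $\mathcal B$ is nowhere dense in $\Theta_n$, the set $\O\setminus\mathcal B$ is non-empty, and any $(\bV',\bDelta')$ chosen therein is essentially generic and satisfies $\Gamma_{\bV',\bDelta'}=\Gamma_{\bV,\bDelta}$, as required.

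The main obstacle is the first step, namely the verification that $\mathcal N$ has codimension at least $2$. This reduces to an algebraic transversality check for the two pairs of critical conditions listed above: the concurrency equations are rational in the speeds and linear in the partial sums $T^\tau_j$ of the delays, and one must confirm that the two equations involve sufficiently independent monomials in the coordinates of $(\bV,\bDelta)$. The bookkeeping becomes unpleasant because a given delay $\Delta_m$ appears in many partial sums depending on $\tau$, and because one must treat the four-line case and the two-distinct-triples case separately, but the argument remains essentially mechanical.
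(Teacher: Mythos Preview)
Your approach is correct and conceptually sound, but it differs from the paper's in a way worth noting. The paper's proof is probabilistic rather than algebraic: it fixes $\bDelta$ and draws $\bV'$ uniformly from a small box $J$ on which the TCS is constant. The key geometric observation is that, for a fixed configuration $(\sigma,\tau)$, as one lowers $V_1$ the trajectory of the slowest bullet pivots about the point $O=O(\sigma,\tau)=(T^\tau_{\sigma^{-1}(1)},0)$; hence two triple collisions occur for the same value of $V_1$ if and only if two pairwise intersection points of the remaining $n-1$ virtual half-lines are colinear with $O$. The paper then argues, adding the half-lines for $V'_2,\dots,V'_n$ one at a time, that each new line almost surely misses the finitely many rays through $O$ and previously created intersection points. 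This completely bypasses the algebraic transversality computation you flag as the main obstacle.

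Your dimension-count route is a legitimate alternative and is arguably more systematic: it handles the whole singular stratification at once rather than exploiting the specific pivoting geometry. The cost is exactly the one you identify, namely checking that the two concurrency hypersurfaces corresponding to overlapping triples are generically transverse; this is mechanical but genuinely requires care, since the delays enter through shared partial sums and the four-line and shared-edge cases behave differently. If you want to keep your framework but shorten that step, note that the paper's pivot observation can be recast in your language: on the segment $\varphi(\,\cdot\,,\bV,\bDelta)$, every concurrency condition involves the first speed, so the relevant piece of $\mathcal N$ is cut out by pairs of equations each linear in the single varying coordinate $(1-s)V_1$; independence is then immediate unless the two intersection points of the other lines happen to be aligned with $O$, which is a codimension-one condition on the \emph{remaining} speeds and gives the required codimension-two bound directly.
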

Our proof of Lemma~\ref{lem:essentia_generic} is probabilistic; for the readers who might be averse to such an existential proof, we mention that one could alternatively give  an explicit and deterministic construction of the parameter $(\bV',\bDelta')$.
\begin{proof}By Lemma~\ref{lem:TCS_open}, there exists a full-dimensional compact set $J\times K \subset \R^n_+\times \R^{n-1}_+$ around $(\bV,\bDelta)$ in which the TCS is constant; in particular, $J\times K$ is included in $\cG_n$. We may, and will from now on, assume that $J$ is a rectangular box. 

  Furthermore, this implies that for any such point $(\bV^\circ,\bDelta^\circ)$, lowering the minimal speed can create at most triple-intersections in the virtual space-time diagram. In order to complete the proof, it suffices to prove that there exists some point in $J$ for which no two such triple collision points are ever aligned with the point in space-time at which the bullet with the minimal speed is shot, call it $O=O(\sigma,\tau)$. 

To do this, we show that if $\bV'$ denotes a point chosen proportionally to Lebesgue measure on $J$, then $(\bV',\bDelta)$ has the desired property with probability one. To see this, fix any $(\sigma,\tau)$ and consider the half-lines corresponding to the bullets with speeds $V_2',V_3',\dots, V_n'$ in this order. Note that the speeds $V_i'$, $2\le i\le n$ are all uniform in some small interval. For any $i\in \{2,\dots, n-1\}$, given the speeds $V_2', \dots, V_i'$ there are only a finite number of intersection points among the corresponding lines. The rays originating from $O$ intersect the half-lines $\bar \HL(V_\ell^\sigma, T^\tau_\ell)$, $2\le \sigma_\ell \le i$ at finitely many points, therefore, the line $\bar \HL(V^\sigma_k, T^\tau_k)$ with $\sigma(k)=i+1$ contains none of these with probability one. As a consequence, almost surely, no two intersections are aligned with $O=O(\sigma,\tau)$. Since the number of configurations is finite, this completes the proof.
\end{proof}
}


\begin{figure}[tbp]
\centerline{\includegraphics[width=16 cm]{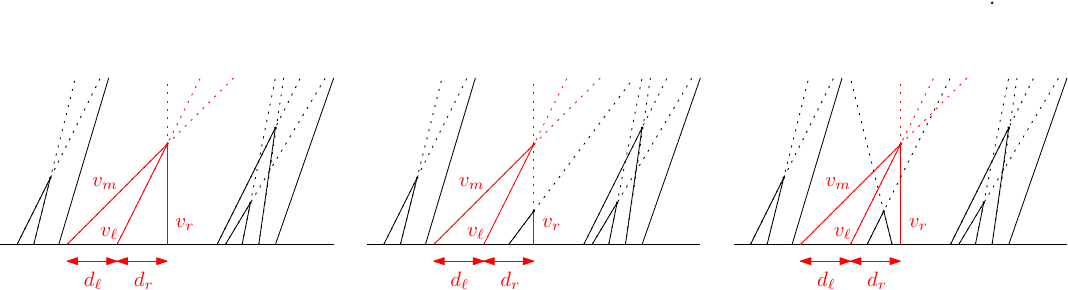}} 
\caption{\label{fig:FDN}In red a critical pattern or critical bi-triangle $(v_m,v_\ell,v_r, d_\ell,d_r)$ in different configurations. On the left, the critical pattern is realized: it is not intersected by any other trajectories. In the middle, the critical pattern is not realized, for one of the three bullets is intersected before reaching the point of the triple collision. On the right, the critical bi-triangle is realized, but not minimal. 
}  
\end{figure}

\black

\section{Invariance with respect to generic parameters}
\label{sec:dthdh}

\subsection{Statement of the invariance and consequences} 
\label{sub:from_independence_to_the_identification_of_the_law}
\label{sec:pf_theo_main}

As we already mentioned earlier, our approach consists in an induction argument. In order to better put the finger on what precisely is needed for the induction hypothesis, we state a fixed-$n$ version of the invariance principle that is one of the keys to the induction step. Note that the key assumption to guarantee that the law of the number of surviving bullets be $\bq_n$ defined in (\ref{eq:q01}--\ref{eq:q02}) is the ``invariance principle'' in \eqref{eq:local_invariance}.


\begin{lem}\label{lem:qn_heriditary}
Let $n\ge 2$. Suppose that, the following two conditions hold:
\begin{compactenum}[(i)]
  \item for every $m<n$, for every $(\bV_m,\bDelta_{m-1})\in \cG_m$, we have
  \[\bP_{\bV_m,\bDelta_{m-1}}^\mC=\bq_m\,,\]
  \item for every essentially generic parameter $(\bV_{n}, \bDelta_{n-1})$ in $\cG_{n}$, we have 
  \begin{equation}\label{eq:local_invariance}
\bP^{\mC}_{\bV_n,\bDelta_{n-1}} = \bP^{\mC}_{\bV_n^\downarrow,\bDelta_{n-1}}\,.
\end{equation}
\end{compactenum}
Then, for every $(\bV_{n},\bDelta_{n-1})\in \cG_{n}$, we have
\[\bP^{\mC}_{\bV_{n},\bDelta_{n-1}} = \bq_{n}\,.\]
\end{lem}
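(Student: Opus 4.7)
The strategy is to reduce to essentially generic parameters via Lemmas~\ref{lem:essentia_generic} and~\ref{lem:tcs}, use hypothesis~(ii) to drop the smallest speed to $0$, and then match the recurrence~\eqref{eq:q02} characterizing $\bq_n$ by conditioning on the position of the slowest bullet. Concretely, Lemma~\ref{lem:essentia_generic} together with Lemma~\ref{lem:tcs} (the surviving set is a function of the TCS) lets us replace $(\bV,\bDelta)$ by an essentially generic parameter with identical TCS, hence identical $\bP^\mC$, so we may assume $(\bV,\bDelta)$ is essentially generic. Hypothesis~(ii) then yields $\bP^{\mC}_{\bV,\bDelta}=\bP^{\mC}_{\bV^\downarrow,\bDelta}$. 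A short argument shows that $(\bV^\downarrow,\bDelta)\in\cG_n$: any triple collision in the associated virtual diagram either involves only bullets of positive speed --- whose trajectories are unchanged from the generic $(\bV,\bDelta)$ --- or it involves the speed-$0$ bullet, which sits forever at position $0$ and would force two other distinct bullets to be simultaneously at the origin, contradicting the positivity of the delays.

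Next, let $(\sigma,\tau)$ be the uniform configuration associated with $(\bV^\downarrow,\bDelta)$ and set $I:=\sigma^{-1}(1)\in\{1,\dots,n\}$. When $I=n$ (probability $1/n$), the speed-$0$ bullet remains in the barrel, survives, and interacts with nothing; the first $n-1$ bullets constitute, conditionally on $\tau_{n-1}$, a Model~$\mC$ problem with speeds $(V_2,\dots,V_n)$ and delay multiset $\{\Delta_p:p\neq\tau_{n-1}\}$ under \emph{uniformly distributed} permutations, whose sub-parameter is generic (sub-diagram of the original generic one), so hypothesis~(i) gives $\bP(X=k\mid I=n)=q_{n-1}(k-1)$. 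When $I=i<n$, the speed-$0$ bullet is annihilated instantaneously at time $T^\tau_{i+1}$ by the bullet fired just after it (they meet at the space-time point $(T^\tau_{i+1},0)$, and no earlier interaction is possible); the survivors are then those of the sub-problem on the $n-2$ bullets at positions $\{1,\dots,n\}\setminus\{i,i+1\}$.

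The core step is to establish that, conditionally on $I<n$, this second sub-problem is an instance of Model~$\mC$ with sub-configuration $(\sigma',\tau')$ uniform on $\fS_{n-2}\times\fS_{n-3}$. The sub-speed multiset $\{V_2,\dots,V_n\}\setminus\{V_{\sigma_{i+1}}\}$ is clearly arranged uniformly by the uniformity of $\sigma$. The sub-delay structure depends on $i$: for $i\in\{1,n-1\}$ two of the $\Delta^\tau_j$ are simply removed and the remaining $n-3$ are uniformly arranged (immediate from the uniformity of $\tau$), whereas for $i\in\{2,\dots,n-2\}$ the three consecutive delays $\Delta^\tau_{i-1},\Delta^\tau_i,\Delta^\tau_{i+1}$ merge into a single ``fat'' delay which, \emph{for each fixed} $i$, is forced to occupy sub-gap $i-1$ --- which is manifestly not uniform. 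The main obstacle, and the point at which the argument must be made with some care, is to verify that this non-uniformity is resolved after averaging over $i$: fixing any target $(\sigma',\tau')$, the value of $i$ is determined by the position of the fat delay prescribed by $\tau'$, after which the $3!$ orderings of the merged triple and the overall uniformity of $(\sigma,\tau)$ yield exactly $6$ original configurations per sub-configuration --- the same count for every $(\sigma',\tau')$. Granting this uniformity, hypothesis~(i) applied to the generic sub-parameter gives $\bP(X=k\mid I<n)=q_{n-2}(k)$, so that
\[
\bP^{\mC}_{\bV,\bDelta}(k)\;=\;\frac{1}{n}\,q_{n-1}(k-1)\;+\;\Big(1-\frac{1}{n}\Big)\,q_{n-2}(k)\;=\;q_{n}(k)
\]
by~\eqref{eq:q02}, which completes the induction.
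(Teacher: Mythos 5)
Your proof is correct and follows essentially the same route as the paper's: reduce to essentially generic parameters via Lemma~\ref{lem:essentia_generic}, apply hypothesis (ii) to pass to $\bV^\downarrow$, and condition on the position of the zero-speed bullet to recover the recurrence \eqref{eq:q02}, with the key exchangeability point (the forced position of the merged ``fat'' delay being resolved only after averaging over $i$) identified and handled just as in the paper, there phrased as a conditioning argument rather than your counting one.
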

}
\begin{proof}
By Lemma~\ref{lem:essentia_generic}, it suffices to prove the claim for all essentially generic parameters. 
Suppose that {\em (i)} and {\em (ii)} hold, and let $(\bV_{n},\bDelta_{n-1})\in \cG_{n}$ be essentially generic, so that \eqref{eq:local_invariance} holds.
We look for a decomposition for the colliding bullet problem with $(\bV^\downarrow_n,\bDelta_{n-1})$, when $\min \bV_n^\downarrow = 0$. 
Since $\sigma$ is uniform in $\fS_n$, the element $a$ such that $\sigma_a=1$ is uniform in $\{1,\cdots,n\}$; and the speed of bullet $a$ is then 0. It happens that:
\begin{enumerate}[(1)]
  \item With probability $\frac 1n$, $a=n$. The last bullet has then speed 0 and all the others have a positive speed, so the last bullet survives. On that event, the permutation $\sigma'$ of the $n-1$ first speeds is uniform on $\fS_{n-1}$ and the last delay $\Delta^\tau_n$ is uniform among all the delays (and is independent from $\sigma'$). Thus, the number of bullets from this groups of $n-1$ that survive is distributed as a convex combination of the $\bP^{\mC}_{\bV^-,\bDelta^-}$, where $\bV^-=(V_2,V_3,\dots, V_n)$ and $\bDelta^-\in \R^{n-2}_+$ is obtained from $\bDelta_{n-1}$ by removing a uniform  component. Clearly, any such $(\bV^-,\bDelta^-)$ is generic; therefore, by induction, the convex combination is simply $\bq_{n-1}$.

  \item With probability $1-\frac 1n$, $a\in\{1,\cdots,n-1\}$. The bullet with speed 0 is shot in position $a$. The bullet that follows is shot at time $T^{\tau}_{a+1}$ and has positive speed $V^\sigma_{a+1}$: it hits the zero-speed bullet immediately at time $T^{\tau}_{a+1}$ (observe that in \eref{eq:ff}, when $v_i=0$, $T(i,j)=t_j$). In other words, the zero-speed bullet remains the barrel and is thus bound to get hit by the next bullet for it has positive speed.
\end{enumerate}
Observe now that, in the latter case (2) when $a\in \{1,2,\dots, n-1\}$: 
\begin{enumerate}[(2.i)]
  \item if $a=n-1$, then the last two bullets collide regardless of the configuration of the $n-2$ first bullets and of the delays  between them. If one conditions on the speed  $V^{\sigma}_n$ of the last bullet, and on  the delays  $(\Delta^{\tau}_{n-2},\Delta^{\tau}_{n-1})$immediately before and after the time when bullet $n-1$ is shot, then the remaining structure is entirely exchangeable, and therefore satisfies the induction hypothesis.
  \item if $a=1$, the same property holds (and the proof follows the same lines).

  \item if $1<a<n-1$, we need to condition on $(a,V^{\sigma}_{a+1},\Delta^{\tau}_{a-1},\Delta^{\tau}_{a},\Delta^{\tau}_{a+1})$. Removing bullets $a$ and $a+1$, we obtain a configuration with $n-2$ remaining bullets where the delay between bullet $a-1$ and $a+2$ is now the sum of three components $\Delta^{\tau}_{a-1}+\Delta^{\tau}_{a}+\Delta^{\tau}_{a+1}$ of $\bDelta_{n-1}$. Besides this, and the fact that two speeds $V^{\sigma}_a$ and $V^{\sigma}_{a+1}$ are fixed, the rest of the configuration is perfectly exchangeable. Now, in this case, $a$ is uniform in $\{2,\dots, n-2\}$, and since the distribution of $V^{\sigma}_{a+1}$ is uniform among the other speeds, integrating on the distribution of $a$  -- still conditioning on the other variables -- by the induction hypothesis the distribution of the surviving bullets is given by $\bq_{n-2}$. Since this is true conditionally on $(\sigma_{a+1},\tau_{a-1},\tau_a,\tau_{a+1})$ whatever these values are, the conclusion follows.
\end{enumerate}
The above decomposition implies that, for any $0\le k\le n$, we have
\[\bP_{\bV_n,\bDelta_{n-1}}^{\mC}(k) = \bP_{\bV^\downarrow_n, \bDelta_{n-1}} = \frac 1 n \bq_{n-1}(k-1) + \left(1-\frac 1 n \right) \bq_{n-2}(k)\,,\]
(with $\bq_{m}(-1)=0$ for every $m$) and it follows that $\bP_{\bV_n,\bDelta_{n-1}}^{\mC}=\bq_n$. Since $(\bV_n,\bDelta_{n-1})$ was arbitrary, this completes the proof.
\end{proof}

{Assuming Model~\ref{def:mC} follows $\bq_n$, the following proposition shows that Models~\ref{def:mA},~\ref{def:mB} and~\ref{def:mD} of Section~\ref{sec:motivation} are also governed by $\bq_n$.
\begin{pro}\label{pro:auxiliary_models}
Suppose that, for every $(\bV_n, \bDelta_{n-1})\in \cG_n$ we have $\bP_{\bV_n,\bDelta_{n-1}}^\mC=\bq_n$. Then:
\begin{compactenum}[(i)]
  \item For any laws $\mu$ (without atom) and $\nu$ (atoms allowed, except at 0), we have
\[
\bP_{n,\mu}^{\mA}=\bP_{n,\mu,\nu}^{\mB}=\bq_n.\]
  \item For any continuous (strictly) increasing function $f:\R_+\to \R_+$ with $f(0)=0$, and any $(\bV_n,\bDelta_{n-1})\in \cG_n$, we have
\[\bP^{\mAC,f}_{\bV_n,\bDelta_{n-1}} = \bq_n\,.
\]
\end{compactenum}
\end{pro}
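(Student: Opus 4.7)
The plan is to establish (ii) first, by a pointwise identification of Models~\ref{def:mD} and~\ref{def:mC} via the monotonicity of $f$, and then to deduce (i) by conditioning on order statistics and invoking the exchangeability of i.i.d.\ samples. The hypothesis $\bP^\mC_{\bV_n,\bDelta_{n-1}}=\bq_n$ for every $(\bV_n,\bDelta_{n-1})\in\cG_n$ will be applied at the end of each argument.

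For (ii), fix a generic parameter $({\bf I},\bDelta)$ and a configuration $(\sigma,\tau)$. Since $f$ is strictly increasing with $f(0)=0$, for any distinct indices $i,j$ and any time $t\ge\max(T^\tau_i,T^\tau_j)$, one has $f(I_{\sigma_i}(t-T^\tau_i))=f(I_{\sigma_j}(t-T^\tau_j))$ if and only if $I_{\sigma_i}(t-T^\tau_i)=I_{\sigma_j}(t-T^\tau_j)$, which is precisely the collision condition for Model~\ref{def:mC} with speeds ${\bf I}$ and delays $\bDelta$. The two dynamics therefore differ only by the monotone spatial reparametrization $y\mapsto f(y)$: the genericity conditions coincide, the collision times are the same, and, configuration-by-configuration, the set of surviving bullets is identical. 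Hence $\bP^{\mAC,f}_{{\bf I},\bDelta}=\bP^\mC_{{\bf I},\bDelta}=\bq_n$ by the assumption.

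For (i), note that Model~\ref{def:mA} is the special case $\nu=\delta_1$ of Model~\ref{def:mB}, so it suffices to treat the latter. Let $\bV^*$ and $\bDelta^*$ denote the order statistics of the i.i.d.\ speeds $(v_1,\ldots,v_n)\sim\mu^{\otimes n}$ and delays $(\Delta_1,\ldots,\Delta_{n-1})\sim\nu^{\otimes(n-1)}$. Since $\mu$ has no atom and $\nu$ has no atom at $0$, one has $0\le V^*_1<\cdots<V^*_n$ and $0<\Delta^*_1\le\cdots\le\Delta^*_{n-1}$ almost surely. Exchangeability of i.i.d.\ samples yields the distributional identity
\[
(v_1,\ldots,v_n,\Delta_1,\ldots,\Delta_{n-1})\eqd(V^*_{\sigma_1},\ldots,V^*_{\sigma_n},\Delta^*_{\tau_1},\ldots,\Delta^*_{\tau_{n-1}}),
\]
where $\sigma\in\fS_n$ and $\tau\in\fS_{n-1}$ are uniform and independent of $(\bV^*,\bDelta^*)$. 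Consequently, conditional on $(\bV^*,\bDelta^*)$, the number of surviving bullets in Model~\ref{def:mB} is distributed as $\bP^\mC_{\bV^*,\bDelta^*}$, and integrating the hypothesis will complete the proof provided $(\bV^*,\bDelta^*)\in\cG_n$ almost surely.

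The main (rather mild) obstacle is this almost-sure genericity. Non-genericity forces three virtual half-lines to concur for some triple $(i,j,k)$ and some configuration $(\sigma,\tau)$; for fixed delays, hence fixed shooting times, this concurrency condition is a non-trivial polynomial equation in the corresponding triple of speeds, and therefore carves out a subset of $\R^n$ of Lebesgue measure zero. Taking the finite union over all $(\sigma,\tau,i,j,k)$ still leaves a measure-zero set, which is avoided almost surely under the atomless $\mu$. Applying the hypothesis then yields $\bP^\mB_{n,\mu,\nu}=\bq_n$, and specializing to the deterministic $\bDelta^*=(1,\ldots,1)$ gives $\bP^\mA_{n,\mu}=\bq_n$. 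Possible ties in $\bDelta^*$ arising from atoms in $\nu$ cause no difficulty: the displayed exchangeability identity does not require a canonical choice of $\tau$.
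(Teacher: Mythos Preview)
Your proof is correct and follows essentially the same route as the paper: part (ii) via the monotone spatial reparametrisation $y\mapsto f(y)$ (the paper writes this as the bijection $\Phi(x,y)=(x,f(y))$ on the space-time diagram), and part (i) by recognising Models~\ref{def:mA} and~\ref{def:mB} as annealed versions of Model~\ref{def:mC} and conditioning on the order statistics.

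One small imprecision worth tightening: the step ``carves out a subset of $\R^n$ of Lebesgue measure zero \ldots\ which is avoided almost surely under the atomless $\mu$'' is not valid as written, since an atomless $\mu$ need not be absolutely continuous (think of a Cantor-type law). What actually works is a Fubini argument: for fixed shooting times and fixed values of all speeds but one, the concurrency equation determines at most one value of the remaining speed, and an atomless $\mu$ assigns zero mass to singletons. This is the content of your polynomial observation, but the conclusion should be drawn via Fubini and atomlessness directly rather than via Lebesgue measure. The paper, incidentally, is no more explicit on this point than you are.
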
 

\begin{proof}
{\em (i)} Both  $\bP_{n,\mu}^{\mA}$ and $\bP_{n,\mu,\nu}^{\mB}$ are annealed versions of $\bP^{\mC}_{\bV,\bDelta}$. For any fixed vector $\bDelta_{n-1}$ of non-zero real numbers, $(\bV_n,\bDelta_{n-1})\in \cG_n$ almost surely if  $\bV_n$ is obtained by sorting $n$ i.i.d.\ copies of a random variable with law $\mu$. Taking the vector $\bDelta_{n-1}$ as ${\mathbf 1}=(1,1,\dots, 1)$, we obtain immediately that $\bP_{n,\mu}^{\mA}=\bq_n$. Furthermore, since $\nu$ has no atom at zero, $\min \bDelta_{n-1}>0$ with probability one when $\bDelta_{n-1}$ consists of a family of $n-1$ i.i.d. random variables with distribution $\nu$, and it follows that $\bP^{\mB}_{n,\mu,\nu}=\bq_n$.

{\em (ii)} Let $(\bV_n,\bDelta_{n-1})\in \cG_n$. Fix any continuous and (strictly) increasing $f:\R_+\to \R_+$ with $f(0)=0$. Then the map $\Phi:\R^2\to \R^2$ defined by $\Phi(x,y) =(x,f(y))$ is a one-to-one correspondence between the space-time diagrams of the model 
\[\bP^\mC_{\bV_n,\bDelta_{n-1}} 
\qquad \text{and those of}\qquad
\bP^{\mAC,f}_{\bV_n,\bDelta_{n-1}}.
\]
This one-to-one correspondence induces a one to one correspondence between the virtual collision points in both models which preserves the lexicographical order $\le_{\rm lex}$ on the plane: $(x_1,y_1) \leq_{\rm lex} (x_2,y_2) \equi \Phi(x_1,y_1)\leq_{\rm lex} \Phi(x_2,y_2)$. It follows immediately  $\Phi$ preserves the order of the collisions, and thus, the number and identities of surviving bullets.  The claim follows readily. 
\end{proof}

\subsection{Crossing a single singular point} 
\label{sub:crossing_a_single_singular_point}

We now state the main element of our strategy:
\begin{lem}The assumption $(ii)$ of Lemma~\ref{lem:qn_heriditary} holds: for every essentially generic parameter $(\bV_{n}, \bDelta_{n-1})$ in $\cG_{n}$, we have $\bP^{\mC}_{\bV_n,\bDelta_{n-1}} = \bP^{\mC}_{\bV_n^\downarrow,\bDelta_{n-1}}\,$.
\end{lem}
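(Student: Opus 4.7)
The plan is a deformation argument. Set $\bV_n^{(t)} := (1-t)\bV_n + t\,\bV_n^{\downarrow}$ for $t \in [0,1]$, the one-parameter family that lowers the minimum from $V_1$ down to $0$ while keeping the other speeds fixed. It suffices to prove that $t \mapsto \bP^{\mC}_{\bV_n^{(t)},\bDelta_{n-1}}$ is constant on $[0,1]$. By essential genericity of $(\bV_n,\bDelta_{n-1})$, every $\bV_n^{(t)}$ is either generic or simple singular; moreover, since each fixed configuration $(\sigma,\tau)$ admits only finitely many triples of bullets, each of which can create a triple concurrence for at most one value of $t$, the set $S \subset [0,1]$ of singular values is finite. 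On each maximal open subinterval of $[0,1]\setminus S$ the parameter stays generic, so Lemma~\ref{lem:TCS_open} implies that the TCS $\Gamma_{\bV_n^{(t)},\bDelta_{n-1}}$ is locally constant in $t$, and Lemma~\ref{lem:tcs} then yields that the set of surviving bullets of every configuration is preserved. Hence $\bP^{\mC}_{\bV_n^{(t)},\bDelta_{n-1}}$ is constant on each such subinterval, and the problem reduces to showing it does not jump as $t$ crosses any $t^* \in S$.

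Fix such a simple singular $t^*$. I would partition configurations $(\sigma,\tau)$ into three classes with respect to the critical patterns of $(\bV_n^{(t^*)},\bDelta_{n-1})$, all of which must involve the minimum speed by essential genericity. For configurations containing no critical pattern at $t^*$, the condition~\eqref{eq:simple-critic} is open and thus persists in a neighborhood of $t^*$, so both the TCS and the surviving set of such configurations are preserved across $t^*$. For configurations containing a critical pattern $\pi$ that is \emph{not} realized, one of the three bullets of $\pi$ is intercepted by some fourth bullet before reaching the would-be triple intersection point; this earlier intercepting collision is itself governed by non-critical TCS entries that remain stable under small perturbations of $t$, so the surviving set is again preserved. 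The delicate case is that of configurations containing a \emph{realized} critical pattern.

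For a realized critical pattern $\pi = (v_m, v_\ell, v_r, d_\ell, d_r)$, the pair among $\{B_m, B_\ell, B_r\}$ that annihilates first switches discontinuously as $t$ crosses $t^*$, and so does the unique bullet that emerges from the local triple interaction. Outside an arbitrarily small neighborhood of the triple intersection point the space-time diagram is unchanged, so the ensuing cascade depends only on which bullet of the triple survives the local interaction, together with the unchanged outer configuration. \textbf{The main obstacle}, and the combinatorial heart of the proof, is to show that after summing over all configurations in this realized-pattern class, the contributions to the distribution of the number of surviving bullets on the two sides of $t^*$ coincide. I plan to handle this through the two auxiliary constrained colliding bullet models developed in Section~\ref{sec:restrictions}, each encoding the forcing of a specific triple-survivor inside the local interaction; combining the exchangeability of the speeds and delays outside the critical triple with the induction hypothesis~(i) of Lemma~\ref{lem:qn_heriditary} applied to the $(n-2)$-bullet reduced colliding problem that survives the triple interaction, one should be able to identify the two contributions and close the jump at $t^*$, which together with the deformation argument above proves the lemma.
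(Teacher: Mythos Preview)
Your proposal is correct and follows essentially the same approach as the paper: the same one-parameter deformation, the same reduction via Lemmas~\ref{lem:tcs} and~\ref{lem:TCS_open} to the finitely many singular values, the same three-way partition of configurations (non-critical, critical but not realized, critical and realized), and the same deferral of the realized case to the constrained $\LR/\RR$ models of Section~\ref{sec:restrictions}. Two minor refinements the paper makes that you do not mention explicitly: it further reduces realized critical patterns to \emph{minimal} ones (three consecutive bullets) before passing to the constrained models, and the induction that closes the jump is not just hypothesis~(i) of Lemma~\ref{lem:qn_heriditary} on an unconstrained $(n-2)$-bullet problem, but the full bundle $\cP_m = \cP_m^{(1)}\wedge\cP_m^{(2)}\wedge\cP_m^{(3)}$ for $m\le n$, since removing the colliding pair leaves a \emph{constrained} problem (the avoidance of the segment $S$) rather than a plain one.
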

The proof of this lemma will be decomposed, but it will somehow last until the end of Section \ref{sec:restrictions}. 
The proof roughly consists in showing that, as we continuously lower the minimum speed of an essentially generic parameter $(\bV,\bDelta)$, the probability distribution $\bP^{\mC}_{\bV,\bDelta}$ remains unchanged.} This is only partially true, since as we lower the minimum speed, we may encounter singular parameters, for which the colliding bullet problem is not even well-defined. Fix $(\bV,\bDelta)$ an essential generic parameter in $\cG_n$. For $\lambda\in [0,1]$, the parameter 
\[(\bV_\lambda, \bDelta):=((1-\lambda) \bV + \lambda \bV^\downarrow,\bDelta)\] is either a generic or a simple singular parameter; furthermore, there are at most finitely many values $0<\lambda_1<\lambda_2<\dots < \lambda_k<1$ for which $(\bV_\lambda,\bDelta)$ is singular. By Lemmas~\ref{lem:tcs} and~\ref{lem:TCS_open}, the map
\begin{equation}\label{eq:map_lowering_speed}
\lambda \mapsto \bP_{\bV_\lambda,\bDelta}^\mC
\end{equation}
is constant on each of the intervals $[0,\lambda_1)$, $(\lambda_i,\lambda_{i+1})$, $1\le i<k$ and $(\lambda_k,1]$. As a consequence, proving
 that the hypothesis of  Lemma \ref{lem:qn_heriditary} holds 
reduces to showing that, for $1\le i\le k$, the left and right limits at $\lambda_i$ agree. 
Note that, by construction, for every $1\le i\le k$, the parameter $(\bV_{\lambda_i},\bDelta)$ is simple singular, and every critical pattern for $(\bV_{\lambda_i},\bDelta)$ involves the minimum speed $\lambda_i V_1$. In the following, we call such a singular parameter \emph{honest}. This latter fact is crucial for the arguments to come.
In other words, up to a change of variables, one is lead to studying the difference between $\bP^{\mC}_{\bV+,\bDelta}$ and $\bP^\mC_{\bV-,\bDelta}$ where $(\bV,\bDelta)$ is an honest singular parameter, and
\[\bP^\mC_{\bV+,\bDelta} := \lim_{\lambda \to 0-} \bP^\mC_{\bV_\lambda, \bDelta}
\qquad \text{and}\qquad 
\bP^\mC_{\bV-,\bDelta} := \lim_{\lambda \to 0+} \bP^\mC_{\bV_\lambda, \bDelta}\,.\]
In the following, we refer to this as ``crossing the singular point $(\bV,\bDelta)$''. Since the permutations $(\sigma,\tau)$ are uniformly random, proving that the two laws $\bP^\mC_{\bV+,\bDelta}$ and $\bP^\mC_{\bV-,\bDelta}$ agree consists in verifying that, for every $k\ge 0$, the number of the configurations for which $k$ bullets survive agree for both parameters. In the following, we often refer to these two limit colliding bullets problem as $(\bV-,\bDelta)$ and $(\bV+,\bDelta)$, and what we mean here is that the pair $(\bV_-, \bDelta)$, $(\bV_-,\bDelta)$ refers to any choice of $(\bV_{-\lambda},\bDelta)$, $(\bV_{\lambda},\bDelta)$ with $\lambda>0$ small enough such that $(\bV,\bDelta)$ is the only singular parameter among $(\bV_\mu,\bDelta)$ with $\mu\in [-\lambda,\lambda]$, the actual choice being in fact irrelevant.

There are some natural classes of configurations for which the numbers ``obviously'' agree. We now expose some of those classes in order to better focus the remainder of the proof to the classes of configurations for which some genuine work is needed. 

\medskip
\noindent (i) \textsc{Only critical configurations matter.} 
 In  words:  the contribution to $\bP^\mC_{\bV+,\bDelta}$ and to $\bP^\mC_{\bV-,\bDelta}$ of the configurations which do not contain any critical pattern is the same. Formally,  we say that a configuration $(\sigma,\tau)$ is \emph{critical} for $(\bV,\bDelta)$ if there exists a (necessarily unique) $(\sigma,\tau)$-critical pattern with respect to $(\bV,\bDelta)$. We denote by $\cal C(\bV,\bDelta)=\cup_\pi \cal C_\pi(\bV,\bDelta)$ the set of configurations that are critical for $(\bV,\bDelta)$. If $(\sigma,\tau)\not\in \cal C(\bV,\bDelta)$ then $\Gamma_{\bV+,\bDelta}(\sigma,\tau, \cdot, \cdot,\cdot ) = \Gamma_{\bV-,\bDelta}(\sigma,\tau, \cdot, \cdot,\cdot )$. Therefore, for every $k\ge 0$, the numbers of non-critical configurations in which $k$ bullets survive are identical for $(\bV-,\bDelta)$ and $(\bV+,\bDelta)$. 

\medskip
We now look further at the critical configurations in $\cal C(\bV,\bDelta)$. Recall the notion of a realized critical pattern and of a minimal critical pattern defined in Section~\ref{ssec:singular-critical}.

\medskip
\noindent (ii) \textsc{Only configurations with realized critical patterns matter.} In words: 
the contribution to $\bP^\mC_{\bV+,\bDelta}$ and to $\bP^\mC_{\bV-,\bDelta}$ of the configurations without any realized critical pattern is the same. Formally, consider $\pi=(\min \bV,v_\ell,v_r,d_\ell,d_r)$ a critical pattern for $(\bV,\bDelta)$. Fix a configuration $(\sigma,\tau)\in \cal C_\pi \setminus \cal R_\pi$ for which the pattern is not realized. Then, by definition, at least one of the bullets with speeds $\min \bV$, $v_\ell$ or $v_r$ does not survive until the time where the triple-collision is supposed to occur. As a consequence, the space-time diagrams of $(\bV-,\bDelta)$ and $(\bV+,\bDelta)$ corresponding to $(\sigma,\tau)$ are identical, and therefore, the number of surviving bullets is the same in both situations. It follows that it suffices to consider the configurations $(\sigma,\tau)\in \cup_\pi \cal R_\pi$ in which the critical pattern is realized. 

\medskip 
\noindent (iii) \textsc{It suffices to consider minimal critical patterns.} In words: 
the contribution to $\bP^\mC_{\bV+,\bDelta}$ and to $\bP^\mC_{\bV-,\bDelta}$ of the configurations with a realized critical pattern which is not minimal in the sense that it is not formed by three consecutive bullets can be reduced to a similar structure on a bullet problem with less than $n$ bullets, and  can thus be treated by induction. Formally,
let $\pi=(\min \bV, v_\ell, v_r, d_\ell, d_r)$ be a critical pattern that is not minimal, that is at least one of $d_\ell$ or $d_r$ is the sum of more than one component of $\bDelta$. Consider now $\cal R_\pi$, and suppose that it is not empty (this requires, for instance, that both $d_\ell$ and $d_r$ are sums of an odd number of elementary delays to ensure that the bullets shot ``within the critical bi-triangle'' pairwise annihilate). 

Fix $(\sigma,\tau)\in \cal R_\pi$. Let $T^\tau_i$, $T^\tau_{j}$ and $T^\tau_{k}$, with $1\le i<j<k\le n$ denote respectively the times at which the bullets with speeds $V_1=\min \bV$, $v_\ell$ and $v_r$ are shot. The configuration $(\sigma,\tau)$ may be decomposed into two parts: an ``outer'' configuration of speeds and delays on $[0,T^\tau_i] \cup [T^\tau_k, T^\tau_n]$ and another ``inner'' configuration of speeds and delays on $[T^\tau_i,T^\tau_k]$. Of course, these configurations are constrained by durations of the intervals, and the fact that both the bullets shot from $[0,T^\tau_i) \cup (T^\tau_k,T^\tau_n]$ and from $(T^\tau_i,T^\tau_k) \setminus \{T^\tau_j\}$ should avoid the trajectories of the bullets with speeds $V_1$, $v_\ell$ and $v_r$ before the triple-collision.

We can decompose the configuration $(\sigma,\tau)$ as follows:
\begin{itemize}
  \item \emph{outer configuration:} Let ${\cal I}:=\{\sigma_p: T^\tau_p<T^\tau_i \text{ or }T^\tau_p> T^\tau_k\}$ and ${\cal J}:=\{\tau_p: p< i \text{ or } p\ge k\}$. Then, $|{\cal I}|=i+n-k-1$ and $|{\cal J}|=i-1+n-k$. Let $\bV^\bullet$ denote the increasing reordering of $\{V_p: p\in {\cal I}\}\cup \{V_1,v_\ell,v_r\}$, and $\bDelta^\bullet$ the increasing reordering of $\{\Delta_p: p \in {\cal J}\} \cup \{d_\ell, d_r\}$. Then, $(\bV^\bullet, \bDelta^\bullet) \in \Theta_{n+i-k+2}$. 

  \item \emph{inner configuration:} Let $\bV^\circ$ and $\bDelta^\circ$ be the increasing reorderings of $\{V_p: p\not \in {\cal I}\}$ and $\{\bDelta_p: p\not \in {\cal J}\}$, respectively. Then $(\bV^\circ,\bDelta^\circ)\in \Theta_{k+1-i}$. 
\end{itemize}
The configuration $(\sigma,\tau)$ then corresponds to a pair of configurations, say $(\sigma^\bullet,\tau^\bullet)$ and $(\sigma^\circ,\tau^\circ)$ for $(\bV^\bullet, \bDelta^\bullet)$ and $(\bV^\circ, \bDelta^\circ)$, respectively. Both $(\sigma^\bullet,\tau^\bullet)$ and $(\sigma^\circ,\tau^\circ)$ contain the critical pattern $\pi$, and it is realized; furthermore, $\pi$ is minimally critical for $(\bV^\bullet, \bDelta^\bullet)$. We emphasize the fact that, while a given configuration gives rise to a single pair of parameters $(\bV^\bullet,\bDelta^\bullet)$, $(\bV^\circ,\bDelta^\circ)$, in general, there may be more than one pair of parameters when one considers all the configurations $(\sigma,\tau)\in {\cal R}_\pi(\bV,\bDelta)$. 

The fact that $\pi$ is realized yields a kind of decoupling between the configurations $(\sigma^\bullet,\tau^\bullet)$ and $(\sigma^\circ,\tau^\circ)$: given any achievable pair of parameters $(\bV^\bullet,\bDelta^\bullet)$, $(\bV^\circ, \bDelta^\circ)$, and all the configurations $(\sigma^\bullet,\tau^\bullet)$ for which $\pi$ is realized, every configuration of $(\sigma^\circ,\tau^\circ)$ for which $\pi$ is also realized is compatible with $(\sigma^\bullet,\tau^\bullet)$. Therefore, the number of configurations $(\sigma,\tau)$ for which $\pi$ is realized and the decomposition in pair of parameters is $(\bV^\bullet,\bDelta^\bullet)$, $(\bV^\circ, \bDelta^\circ)$ is the product $\#\{(\sigma^\bullet,\tau^\bullet) \in {\cal R}_\pi\} \times \#\{(\sigma^\circ,\tau^\circ) \in {\cal R}_\pi\}$. Furthermore, since the only bullets that may survive are those with speed in $\bV^\bullet$, we have the refined relation:
\begin{align*}
& \# \{(\sigma,\tau)\in {\cal R}_\pi(\bV,\bDelta): |\cS_{\bV,\bDelta}(\sigma,\tau)| = k\}\\
& = \sum \#\{(\sigma^\bullet,\tau^\bullet)\in {\cal R}_\pi(\bV^\bullet,\bDelta^\bullet): |\cS_{\bV^\bullet,\bDelta^\bullet}(\sigma^\bullet,\tau^\bullet)|=k\} 
\times \#\{(\sigma^\circ,\tau^\circ)\in {\cal R}_\pi(\bV^\circ,\bDelta^\circ)\}\,,
\end{align*}
where the sum in the right-hand side extends over achievable pairs of parameters $(\bV^\bullet,\bDelta^\bullet)$, $(\bV^\circ,\bDelta^\circ)$. In other words, if we proceed by induction on $n$, the fact that the numbers of configurations where some non-minimal critical pattern is realized agree is a direct consequence of the fact that the numbers agree for all minimal critical patterns in a colliding bullets problem of smaller size.

\medskip
We can now state the result of the arguments above: 

\begin{pro}\label{pro:real_pattern}Let $(\bV,\bDelta)$ be an honest simple singular parameter. Moreover, let $\pi=(\min \bV, v_\ell,v_r,d_\ell,d_r)$ be a minimal critical pattern for $(\bV,\bDelta)$. If the distribution of the number of surviving bullets in $(\bV-,\bDelta)$ and $(\bV+,\bDelta)$ agree when we restrict the count to configurations in $\cal R_\pi$, then the distribution is preserved over $\fS_n\times \fS_{n-1}$.
\end{pro}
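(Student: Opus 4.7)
My plan is to combine the three reductions (i), (ii), and (iii) established just before the proposition with the honesty and simplicity of $(\bV,\bDelta)$ in order to reduce the comparison of $\bP^\mC_{\bV-,\bDelta}$ and $\bP^\mC_{\bV+,\bDelta}$ to the single class of configurations handled by the hypothesis. I argue by induction on $n$, taking as inductive hypothesis the preceding Lemma (invariance across any honest simple singular parameter in $\cG_m$) for every $m<n$; the hypothesis of the proposition is itself to be applied separately at each minimal critical pattern.

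Since $(\bV,\bDelta)$ is honest and simple, every critical pattern involves $\min \bV$ and no configuration contains more than one critical pattern, so we have a disjoint decomposition $\fS_n \times \fS_{n-1} = \cal N \sqcup \bigsqcup_\pi \cal C_\pi$, where $\cal N$ denotes the non-critical configurations and $\pi$ ranges over critical patterns of $(\bV,\bDelta)$. Reduction (i) shows that $\cal N$ contributes identically to $\bP^\mC_{\bV-,\bDelta}$ and $\bP^\mC_{\bV+,\bDelta}$, and writing $\cal C_\pi = (\cal C_\pi \setminus \cal R_\pi) \sqcup \cal R_\pi$, reduction (ii) shows that $\cal C_\pi \setminus \cal R_\pi$ contributes identically as well. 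The problem thus reduces to checking the agreement on each $\cal R_\pi$ separately.

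For minimal $\pi$ the agreement on $\cal R_\pi = \cal C_\pi \cap \cal R_\pi$ is exactly the hypothesis. For non-minimal $\pi$, I appeal to the outer/inner decomposition from (iii): each $(\sigma,\tau)\in \cal R_\pi$ corresponds bijectively to a pair $((\sigma^\bullet,\tau^\bullet),(\sigma^\circ,\tau^\circ))$ of compatible configurations for parameters $(\bV^\bullet,\bDelta^\bullet)$ of strictly smaller size (in which $\pi$ becomes minimally critical and is realized) and $(\bV^\circ,\bDelta^\circ)$ (in which $\pi$ is also realized). The realization of $\pi$ ensures that the three sides of the bi-triangle shield the inner block from the outer one, so the number of surviving bullets depends only on the outer configuration, and the contribution of $\cal R_\pi$ to $\bP^\mC_{\bV\pm,\bDelta}$ factorizes as (number of compatible inner configurations, independent of sign) $\times$ (count of outer surviving bullets restricted to $\cal R_\pi(\bV^\bullet,\bDelta^\bullet)$). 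The inductive hypothesis yields global invariance for $(\bV^\bullet,\bDelta^\bullet)$, and applying reductions (i) and (ii) at that smaller scale gives invariance on $\cal N(\bV^\bullet,\bDelta^\bullet)$ and on $\cal C_\pi(\bV^\bullet,\bDelta^\bullet) \setminus \cal R_\pi(\bV^\bullet,\bDelta^\bullet)$; subtracting gives invariance on $\cal R_\pi(\bV^\bullet,\bDelta^\bullet)$, completing the non-minimal case.

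The main technical point is the careful setup of the decomposition in (iii): precisely defining the bijection between $\cal R_\pi(\bV,\bDelta)$ and compatible outer/inner pairs, verifying that $(\bV^\bullet,\bDelta^\bullet)$ remains an honest simple singular parameter whose unique critical pattern $\pi$ is now minimal, and justifying the factorization via the shielding property of the realized bi-triangle. A secondary subtlety is that the induction requires invariance on the restricted class $\cal R_\pi(\bV^\bullet,\bDelta^\bullet)$ rather than globally; this restriction comes for free upon subtracting the classes already controlled by reductions (i) and (ii) at the smaller scale, but one must double-check that the reductions themselves do not circularly require the very conclusion we are trying to establish.
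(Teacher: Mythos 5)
Your proposal is essentially the paper's own proof: the proposition is introduced there with ``We can now state the result of the arguments above'', i.e.\ its proof \emph{is} the three reductions (i)--(iii), assembled exactly as you do via the disjoint decomposition $\fS_n\times\fS_{n-1}=\mathcal N\sqcup\bigsqcup_\pi \mathcal C_\pi$ (disjointness being guaranteed by honesty plus simplicity), with minimal patterns handled by the hypothesis and non-minimal ones by the outer/inner factorization and induction on $n$. The one point where you deviate is how you obtain the restricted agreement on $\mathcal R_\pi(\bV^\bullet,\bDelta^\bullet)$ at the smaller scale: you subtract the classes controlled by (i) and (ii) from the \emph{global} invariance given by your inductive hypothesis, whereas the paper feeds in the restricted agreement for minimal patterns at smaller sizes directly (this is ultimately what ${\cal P}^{(3)}_m$ supplies). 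Your subtraction only isolates the union $\bigsqcup_{\pi'}\mathcal R_{\pi'}(\bV^\bullet,\bDelta^\bullet)$ over \emph{all} critical patterns of the reduced parameter, not $\mathcal R_\pi$ itself, so it closes the induction only when $\pi$ is the unique critical pattern of $(\bV^\bullet,\bDelta^\bullet)$. That uniqueness does hold in the situation where the proposition is actually invoked (a singular parameter met while lowering the minimal speed of an essentially generic parameter has a single critical pattern, and every critical pattern of the reduced parameter is also one of the original), but to get the proposition in the generality in which it is stated you should either add that observation or take as hypothesis the restricted agreement for every minimal pattern at every size $\le n$, as the paper implicitly does.
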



\subsection{On the need for keeping track of constraints}

The arguments of the previous section, summarized in Proposition~\ref{pro:real_pattern}, imply that we may focus on honest simple singular parameters, and on configurations in which a minimal critical pattern is realized. The minimality of the pattern and the fact that the minimal speed $\min \bV$ is involved in the pattern imply that the constraints that $\pi$ be realized in $(\sigma,\tau)$ reduces to the fact that no bullet hits the critical bitriangle from the right. 

Now, a glance at Figure~\ref{fig:FDN2} suffices to note that, for a configuration in $\cal R_\pi$, the sole effect of passing from $\bV+$ to $\bV-$ is to release the line with speed $v_r$, and replace the collision between the bullets with speeds $v_\ell$ and $v_r$ by the collision between the bullets with speeds $\min \bV$ and $v_\ell$. Furthermore, in view of Fig.~\ref{fig:FDN2}, for the configurations containing the minimal and realized bi-triangle $\pi$, the two colliding bullets -- those with speeds $v_\ell$ and $v_r$ before slowing down in $(\bV+,\bDelta)$ and those with speeds $\min \bV$ and $v_\ell$ in $(\bV-,\bDelta)$ -- do not contribute to the number of surviving bullets. In spite of this, we cannot just suppress them:
\begin{itemize}
  \item \textsc{it would modify the colliding problem.} For example, consider the case of $\bV+$. Since the bullets with speeds $v_\ell$ and $v_r$ collide, the delays $d_{\ell}$ and $d_r$ merge. If we were to remove the bullets with speeds $v_\ell$ and $v_r$, there would be no bullet shot at time $d_\ell+d_r$ after the bullet $\min \bV$. This would imply that the delay between the bullet with minimal speed and the next one (if any), is the sum of three elementary intervals. 
  \item \textsc{Constraints persist.} More importantly, the removal in the space-time diagram of the bullets with speeds $v_\ell$ and $v_r$ does not remove the constraints they were holding: we restricted ourselves to configurations where the critical pattern is realized; this restriction \emph{a priori} imposes restrictions on the configurations that may be obtained when removing the bullets with speeds $v_\ell$ and $v_r$.
\end{itemize}

For all the above reasons, we are led to consider more general models that allow to keep track of the constraints imposed by the assumptions along the course of the induction argument. This is developed in the next section, where we also complete the proof of Theorem~\ref{theo:main}.

\begin{figure}[tbp]
\centering
\includegraphics[width=16 cm]{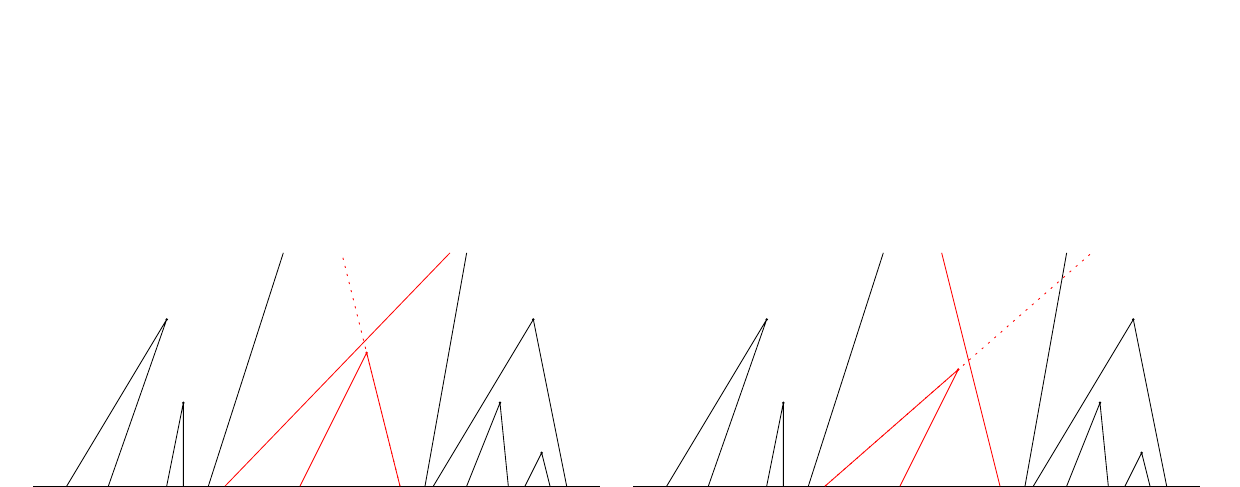}
\caption{\label{fig:FDN2} The evolution of the space-time diagram when crossing a singular parameter $(\bV,\bDelta)$ for a configuration where the critical pattern is realized: The effect of reducing the speed of the bullet forming the left-side of the bi-triangle till it crosses the singular value of the parameter is to switch the pair of bullets colliding in the vicinity of the triple point, and hence to also switch the bullet that survives. }
\end{figure}

\section{Counting configurations with general restrictions}
\label{sec:restrictions}

\subsection{Two combinatorial models with constraints}


The considerations of the previous section motivate the introduction of two colliding bullets models with restrictions, which 
generalize the initial colliding bullets problem. It is the relationship between these two models that allows to compare the distributions $\bP^\mC_{\bV+,\bDelta}$ and $\bP^\mC_{\bV-,\bDelta}$ of the number of surviving bullets when crossing a minimal singular parameter. 
Both models are very similar to the initial colliding bullets problem, except that: there are a distinguished delay $\Delta^{\star}$, a distinguished speed $V_r$ and a distance $s$ which, together with the minimal speed $V_{\min}$ enter into play to constrain the set of configurations that are allowed. 

The vectors of speeds and delays are now denoted by $\bV_n=(V_1,\cdots,V_{n-2},V_{\min},V_r)$ and $\bDelta_{n-1}=(\Delta_1,\cdots,\Delta_{n-2},\Delta^{\star})$, respectively\footnote{For the sake of simplicity, $\bV$ is not a sorted vector anymore.}; we also enforce that $V_{\min}$ is the minimal speed of $\bV_n$. Hence, a configuration is now a pair of permutations $(\sigma,\tau)\in \fS_{n-2}\times \fS_{n-1}$. Given the permutation $\tau\in \fS_{n-1}$ of the delays, we define a sequence of times $(T^\tau_i)_{1\le i\le n}$, by 
\ben
T^{\tau}_i= \Delta_{\tau_1}+\cdots+\Delta_{\tau_{i-1}},
\een 
where for convenience we have written $\Delta_{n-1}:=\Delta^\star$. Two of these times are distinguished and correspond to the beginning and to the end of the interval corresponding to the distinguished delay $\Delta^{\star}$; we denote them by $\bar T^\tau_\scl$ and $\bar T^\tau_\scr$, with the constraint that $\bar T^\tau_\scr - \bar T^\tau_\scl=\Delta^\star$. The $n-2$ remaining non-distinguished times are distinct and come with a natural ordering, and we denote them by $\bar T^\tau_i$, $1\le i\le n-2$.

The speeds are then assigned to the times as follows: the speeds $V_{\min}$ and $V_r$ are assigned to the distinguished times $\bar T^\tau_{\scl}$ and $\bar T^\tau_{\scr}$ in this order.  The permutation $\sigma\in \fS_{n-2}$ then determines to which non-distinguished time is assigned each one of the $n-2$ non-distinguished speeds. We let 
\ben
H=H(\bV_n,\bDelta_{n-1})
\een denote the distance between the horizontal axis and the point $\bar \HL(V_{\min},0) \cap \bar \HL(V_r,\Delta^\star)$, which exists and lies above the axis since $V_{\min} < V_r$ (see as Figure~\ref{fig:tukyi}.)

The two new models, which, in passing, are combinatorial models, are parametrized by $(\bV_n,\bDelta_{n-1})\in \cG_n$, a real number $s\in [0,H]$, and a set $A$ which will take in the sequel one of the three values:  $\{0\}$, $\mathbb{Z}^+$ or $\mathbb{Z}^+\setminus \{0\}$. In both models, there is a special segment 
\[S:=\bar \HL(V_r,\bar T^\tau_{\scr})~\bigcap~[0,\infty)\times [0,s].\]
The restriction will come from the number of bullets whose true trajectory hits the segment $S$ in the space-time diagram. In both models, there is only one bullet that is shot from one of the extremities of the distinguished interval corresponding to $\Delta^\star$, and the name of the model refers to whether it is shot from the \emph{left} or the \emph{right} end point. We phrase the models combinatorially, but one should keep in mind that since we will counting configurations, there is an underlying uniform measure on the $(\sigma,\tau)\in {\fS}_{n-2}\times {\fS}_{n-1}$.

\begin{model}{-- \sc Left model with restriction $\LR(\bV_n,\bDelta_{n-1},s, \cdot,\cdot)$.} \label{def:LR}
For $(\sigma,\tau)\in{\fS}_{n-2}\times {\fS}_{n-1}$:
\begin{compactitem}[\textbullet]
  \item Shoot the bullet with speed $V_{\sigma_j}$ at time $\bar{T}^{\tau}_j$ for $1\le j\le n-2$; this gives rise to the virtual trajectories $\bar \HL(V^\sigma_j, \bar T^\tau_j)$, $1\le j\le n-2$.
  \item Shoot the bullet with minimal speed $V_{\min}$ at time $\bar{T}^{\tau}_\scl$; this corresponds to the virtual trajectory $\bar \HL(V_{\min},\bar{T}^{\tau}_\scl)$.
  \item No bullet is shot at time $\bar T^\tau_{\scr}$. 
\end{compactitem}
 \medskip
For $k\ge 0$, we denote by $\LR(\bV_n,\bDelta_{n-1},s, A,k)$ the set of configurations $(\sigma,\tau)$ such that in the (true) space-time diagram of the $n-1$ bullets, the number of bullets whose true trajectory crosses the segment $S$ belongs to $A$, and $k$ bullets eventually survive. 
\end{model}

\begin{figure}[tb]
\centering 
\includegraphics[width=16cm]{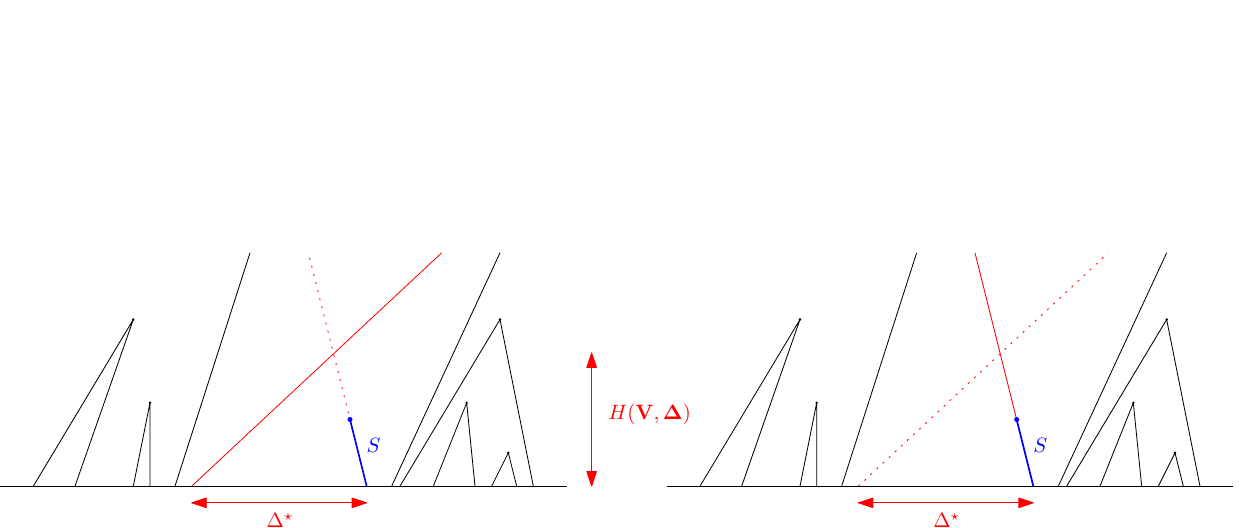}
\caption{\label{fig:tukyi} 
The same configuration is represented for the two constrained models (Models~\ref{def:LR} and~\ref{def:RR}). In red, the only lines corresponding to the distinguished speeds, forming what we will refer to as the \emph{special triangle}; in blue the special segment $S$ used to restrict the configurations that are involved. The left side of the special triangle is denoted by $LS$, and the right side by $RS$.}
\end{figure}

\begin{model}{-- \sc Right model with restriction $\RR(\bV_n,\bDelta_{n-1},s, \cdot,\cdot)$.}\label{def:RR}
For $(\sigma,\tau)\in{\fS}_{n-2}\times {\fS}_{n-1}$:
\begin{compactitem}[\textbullet]
  \item Shoot the bullet with speed $V_{\sigma_j}$ at time $\bar{T}^{\tau}_j$ for $j$ from $1$ to $n-2$; this gives rise to the virtual trajectories $\bar \HL(V^\sigma_j, \bar T^\tau_j)$, $1\le j\le n-2$.
  \item No bullet is shot at time $\bar T^\tau_{\scl}$.
  \item Shoot the bullet with speed $V_r$ at time $\bar T^\tau_{\scr}$; this corresponds to the virtual trajectory $\bar \HL(V_r, \bar T^\tau_{\scr})$.
\end{compactitem}
\medskip
For $k\ge 0$, Denote by $\RR(\bV_n,\bDelta_{n-1},s, A,k)$ the set of configurations $(\sigma,\tau)$ such that in the (true) space-time diagram, the number of bullets that cross the segment $S$ belongs to $A$, and $k$ bullets survive. (\emph{By convention, the bullet with speed} $V_r$ \emph{does not cross} $S$.)
\end{model}
{ 

The relationship between the two models when $s=H$ and $A=\{0\}$ is precisely the one between the two versions of the original colliding bullet problem taken at two different parameters in the vicinity of a singular parameter: the consequence of a modification of the minimal speed is that the bullet which survives the ``near triple collision'' switches from the one with speed $V_{\min}$ to the one with speed $V_r$ (or vice versa). (Compare Figures~\ref{fig:FDN2} and \ref{fig:tukyi}.) The other values for $s$ and $A$ are used for the proof.

Let $n\ge 2$ be a natural number and consider the three following properties; recall the sequence of probability distributions $\bq_n$ defined in \eqref{eq:q01} and \eqref{eq:q02}.
\begin{align*}
\setlength{\jot}{30pt}
{\cal P}_n^{(1)}:& 
\quad~ \left\{~\,\text{for any generic parameter } (\bV_n,\bDelta_{n-1}) \in \cG_n \text{ we have } \bP_{\bV_n,\bDelta_{n-1}}^{\mC} = \bq_n ~\,
\right\}\\[10pt]
{\cal P}_n^{(2)}: &
\quad~\left\{
\begin{array}{c}
~~\text{for the set }A \text{ being either }\{0\}, \mathbb{Z}_+, \text{ or }\mathbb{Z}_+\setminus \{0\} \text{ we have }\\
\text{ for all } (\bV_n,\bDelta_{n-1})\in \cG_n, \text{for all } s\leq H(\bV_n,\bDelta_{n-1}), \text{ and for all } k\ge 0\!\\ 
\l|\LR(\bV_n, \bDelta_{n-1},s, A,k)\r|=\l|\RR(\bV_n, \bDelta_{n-1},s, A,k)\r| \quad 
\end{array}
\right\}\\[10pt]
{\cal P}_n^{(3)}:&
\quad~\left\{
\begin{array}{c}
\text{there exists a map }g_n:\mathbb Z_+ \to \mathbb Z_+ \text{ such that} \\
\text{for any }(\bV_n,\bDelta_{n-1})\in \cG_n \text{ and for all }k\ge 0 \text{ we have }\\
\quad~ |\LR(\bV_n, \bDelta_{n-1},0, \{0\},k)|=|\RR(\bV_n, \bDelta_{n-1},0, \{0\},k)| = g_n(k)\quad~~\,
\end{array}
\right\}\,.
\end{align*}

\begin{rem}(i) In view of Proposition~\ref{pro:auxiliary_models}, the part that is currently missing to complete the proof of Theorem~\ref{theo:main} is precisely the fact that ${\cal P}_n^{(1)}$ holds for every $n$.

\noindent (ii) We emphasize the fact that, in ${\cal P}_n^{(2)}$, it is not true that the two cardinalities
\[|\LR(\bV_n,\bDelta_{n-1}, s, A, k)| \qquad \text{and} \qquad |\RR(\bV_n,\bDelta_{n-1}, s, A, k)|\]
are independent of $(\bV_n,\bDelta_{n-1})\in \cG_n$.
\end{rem}
}
\begin{pro}\label{theo:main2}
For any $n\ge 2$, the properties ${\cal P}_n^{(1)}$, ${\cal P}_n^{(2)}$ and ${\cal P}_n^{(3)}$ all hold.
\end{pro}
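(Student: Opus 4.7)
The plan is to prove the three properties simultaneously by induction on $n$. The base case $n=2$ is handled directly: $\bP^\mC_{\bV_2,\bDelta_1}$ is easily computed to coincide with $\bq_2$, and for the constrained models the fact that $\fS_{n-2}=\fS_0$ is trivial reduces ${\cal P}_2^{(2)}$ and ${\cal P}_2^{(3)}$ to inspecting a single configuration on each side. Suppose now that ${\cal P}_m^{(1)}$, ${\cal P}_m^{(2)}$ and ${\cal P}_m^{(3)}$ all hold for every $2\le m<n$, and let us show the three properties for $n$.

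The logical organization is as follows. Once ${\cal P}_n^{(2)}$ is established, its specialization to $s=H$, $A=\{0\}$ provides the equality, for every value of $k$, between the numbers of configurations of $\cal C_\pi\cap\cal R_\pi$ with $k$ surviving bullets in $(\bV+,\bDelta)$ and in $(\bV-,\bDelta)$, for any honest simple singular parameter and any minimal critical pattern $\pi$. Proposition~\ref{pro:real_pattern} then extends this equality to the full configuration space, and running the Section~\ref{sec:dthdh} argument along the parametrized family $\lambda\mapsto ((1-\lambda)\bV+\lambda\bV^\downarrow,\bDelta)$ verifies hypothesis $(ii)$ of Lemma~\ref{lem:qn_heriditary} at the $n$-th level; combined with the inductive hypothesis ${\cal P}_m^{(1)}$ for $m<n$, the lemma yields ${\cal P}_n^{(1)}$. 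Thus the whole task reduces to proving ${\cal P}_n^{(2)}$ and ${\cal P}_n^{(3)}$.

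For ${\cal P}_n^{(3)}$ I would exploit that when $s=0$ the special segment $S$ degenerates to a single point on the horizontal axis, so no non-distinguished bullet can meet it and the constraint $A=\{0\}$ is vacuous. Fixing $\tau\in\fS_{n-1}$, the $\LR$ (resp.\ $\RR$) model then becomes a genuine colliding bullets problem of size $n-1$ whose speeds are the $n-2$ non-distinguished ones augmented by $V_{\min}$ (resp.\ $V_r$), and whose shooting times are determined by $\tau$; by ${\cal P}_{n-1}^{(1)}$, as $\sigma$ ranges over $\fS_{n-2}$ the number of surviving bullets follows $\bq_{n-1}$ regardless of the (generic) parameters. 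Summing over $\tau$ gives the common cardinality $g_n(k)=(n-1)!(n-2)!\,\bq_{n-1}(k)$, which is manifestly independent of $(\bV_n,\bDelta_{n-1})$. To upgrade this to the full ${\cal P}_n^{(2)}$, I would continuously grow $s$ from $0$ up to $H$ and track the two cardinalities. By Lemma~\ref{lem:TCS_open} both maps $s\mapsto|\LR(\bV_n,\bDelta_{n-1},s,A,k)|$ and $s\mapsto|\RR(\bV_n,\bDelta_{n-1},s,A,k)|$ are piecewise constant, with discontinuities exactly at the heights where the top endpoint of $S$ meets a virtual trajectory; at each such critical height, configurations migrate between the sets indexed by the three allowed choices of $A$, and the matching on the two sides should be established by isolating the family of bullets interacting with $S$ and feeding the resulting sub-configurations into ${\cal P}_{n-1}^{(2)}$ and ${\cal P}_{n-1}^{(3)}$.

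The main obstacle is the bookkeeping at each critical height. Whether the bullet about to meet $S$ actually reaches it depends on a subtle cascade of collisions happening both before and after the meeting, and the $\LR$ and $\RR$ configurations are not in direct bijection because their distinguished bullets differ. Controlling the transfers therefore requires an ``avalanche'' analysis in the spirit of the informal description that follows Theorem~\ref{theo:main}: freeing the bullet at one end of the special triangle reroutes some collisions and changes the membership of other bullets in $S$, and the resulting identities between cardinalities have to be unpacked by carving the configurations into smaller constrained sub-problems on which the inductive versions of all three properties already apply. I expect the bulk of the technical work to lie in organising this case analysis coherently, rather than in any single conceptual leap; producing an explicit family of bijections, or equivalently a clean telescoping identity as $s$ varies, will be the core challenge.
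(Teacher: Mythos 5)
There is a genuine gap in your argument for ${\cal P}_n^{(3)}$, and it sits exactly on the trap the paper repeatedly warns about. You claim that at $s=0$ the $\LR$ model ``becomes a genuine colliding bullets problem of size $n-1$'' to which ${\cal P}_{n-1}^{(1)}$ applies, yielding $g_n(k)=(n-1)!(n-2)!\,\bq_{n-1}(k)$. But ${\cal P}_{n-1}^{(1)}$ concerns Model~\ref{def:mC}, where the speeds and the delays are permuted \emph{uniformly and independently}. In $\LR(\bV_n,\bDelta_{n-1},0,\{0\},\cdot)$ the speed $V_{\min}$ is deterministically attached to the left endpoint of the distinguished interval $\Delta^\star$ (and, since no bullet is shot at $\bar T^\tau_{\scr}$, the effective delay following the $V_{\min}$-bullet is always $\Delta^\star$ merged with its successor). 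Summing over $\tau$ does not remove this coupling: the induced joint law on (speed assignment, delay assignment) for the $n-1$ shot bullets is \emph{not} that of two independent uniform permutations, and the paper's discussion (Section~\ref{sec:discussion}, ``Fixed distinct delays must be permuted'') records that breaking this independence does in general change the distribution. This is precisely why the paper's proof of Lemma~\ref{lem:ind3} states that the statistics are ``only clearly given by $\bq_{n-1}$ when $\Delta^\star=0$'' and why it introduces a separate deformation argument: shrink $\Delta^\star$ continuously to $0$ (acting symmetrically on $\LR$ and $\RR$), show the statistics are locally constant, and handle the critical patterns crossed along the way by a further induction; only at $\Delta^\star=0$ do $\LR$ and $\RR$ coincide with an honest size-$(n-1)$ instance of Model~\ref{def:mC}. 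Your proposal has no substitute for this step, so ${\cal P}_n^{(3)}$ — and with it the whole induction — is not established.

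Two smaller points. First, your overall architecture (simultaneous induction on the bundle, ${\cal P}_n^{(2)}$ at $s=H$, $A=\{0\}$ feeding Proposition~\ref{pro:real_pattern} and Lemma~\ref{lem:qn_heriditary} to get ${\cal P}_n^{(1)}$) does match the paper. Second, for ${\cal P}_n^{(2)}$ you propose to grow $s$ from $0$ to $H$ and track jumps of the two counting functions; the paper instead fixes $s$, disposes of $A=\mathbb{Z}_+$ via ${\cal P}^{(3)}$, and for $A=\mathbb{Z}_+\setminus\{0\}$ conditions on the lowest half-line $\gamma$ hitting the special segment, splitting according to whether $\gamma$ reaches $LS$ unhit (direct identity of sets) or is first hit by some $\gamma'$ (reduction to a smaller constrained pair via the induction hypothesis). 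Your variant is not obviously unworkable, but as written the ``bookkeeping at each critical height'' you defer is the entire content of the lemma, and without the corrected ${\cal P}_n^{(3)}$ even your base point $s=0$ is unsupported.
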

We will prove Proposition~\ref{theo:main2} by induction. We could not find a argument that would proceed by proving ${\cal P}_n^{(1)}$, ${\cal P}_n^{(2)}$ and ${\cal P}_n^{(3)}$ each separately, and it seems that one has to treat the bundle 
\ben
{\cal P}_n= \big \{\text{ all three properties }{\cal P}_n^{(1)}, {\cal P}_n^{(2)} \textrm { and }{\cal P}_n^{(3)} \textrm{ hold } \big\}
\een
in a single induction argument. This is the main reason why the following proof is slightly intricate.

First, one easily verifies that ${\cal P}_1$ and ${\cal P}_2$ both hold: this can be checked by inspecting the cases: 0 or 1 bullet is fired in all these models, except in ${\cal P}_2^{(1)}$; this latter case with two bullets is simple enough to be checked easily. Now, the induction step necessary to prove Proposition~\ref{theo:main2} follows directly from Lemmas~\ref{lem:ind3}, \ref{lem:ind1} and \ref{lem:ind2} below:

\begin{lem}\label{lem:ind3}Let $n\ge 2$. If ${\cal P}_n$ holds, then ${\cal P}_{n+1}^{(3)}$ holds.
\end{lem}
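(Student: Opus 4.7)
At $s=0$ the special segment $S$ degenerates to the single point $(\bar T^\tau_\scr,0)$ on the horizontal axis, and under genericity no bullet trajectory ever visits this point; hence the restriction $A=\{0\}$ is automatic, and $|\LR(\bV_{n+1},\bDelta_n,0,\{0\},k)|$ simply enumerates LR configurations producing exactly $k$ surviving bullets. The plan is to compute this count explicitly and show
\[
|\LR(\bV_{n+1},\bDelta_n,0,\{0\},k)|\;=\;(n-1)!\,n!\,\bq_n(k),
\]
which manifestly depends only on $n$ and $k$. I decompose by $p:=\tau^{-1}(n)$, the position of $\Delta^\star$ in $\tau$: this is exactly the position of $V_{\min}$ in the $n$-bullet LR firing sequence, and there are $(n-1)!^2$ configurations for each fixed $p\in\{1,\dots,n\}$.

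For $p=n$, the slowest bullet is shot last, so $V_{\min}$ survives and the preceding $n-1$ bullets form a generic instance of Model~\ref{def:mC} with $n-1$ bullets; the induction hypothesis ${\cal P}_{n-1}^{(1)}$ (available via strong induction since ${\cal P}_m$ holds for all $m\le n$) yields $|\LR_{p=n}(k)|=(n-1)!^2\,\bq_{n-1}(k-1)$. The core of the proof is to establish the companion identity
\[
|\LR_p(k)|\;=\;(n-1)!^2\,\bq_{n-2}(k)\qquad\text{for every } p<n.
\]
The intuition is that with $V_{\min}$ in an interior position $p$, it is morally annihilated together with the bullet at position $p+1$, and the residual $n-2$ bullets behave like a generic Model~\ref{def:mC} instance with $n-2$ bullets. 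The difficulty is that when $V_{\min}>0$ the bullet at $p+1$ need not collide with $V_{\min}$ directly: cascades among the trailing faster bullets can reshuffle who eventually reaches $V_{\min}$, or indeed leave $V_{\min}$ untouched altogether. My plan is to justify the identity by a continuous deformation, lowering $V_{\min}$ toward $0$ while holding the remaining parameters fixed and controlling the combinatorial changes at each simple singular crossing via the bundle ${\cal P}_n$---Model~C invariance ${\cal P}_n^{(1)}$, the LR--RR equality ${\cal P}_n^{(2)}$, and the parameter-independence ${\cal P}_n^{(3)}$ at level $n$---together with the geometric machinery of Section~\ref{sec:dthdh}. At the limit $V_{\min}=0$, the slowest bullet is pinned at the origin, the bullet at position $p+1$ annihilates with it the instant it is fired, and the exchangeability of the remaining speeds and delays combined with ${\cal P}_{n-2}^{(1)}$ delivers exactly $(n-1)!^2\,\bq_{n-2}(k)$ configurations per $p$.

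Summing over $p\in\{1,\dots,n\}$ and invoking the recurrence $\bq_n(k)=\frac1n\bq_{n-1}(k-1)+\frac{n-1}{n}\bq_{n-2}(k)$ collapses the two contributions into $(n-1)!\,n!\,\bq_n(k)$. The RR count is handled by the same strategy: decompose along $q:=\tau^{-1}(n)$, which is now the position of $V_r$ in the RR firing sequence, with the extra delay $\Delta^\star$ sitting \emph{before} $V_r$ instead of after $V_{\min}$, and run the analogous continuous-deformation argument (for instance by raising $V_r$ past the other speeds). This yields the same formula $|\RR(\bV_{n+1},\bDelta_n,0,\{0\},k)|=(n-1)!\,n!\,\bq_n(k)$. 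Setting $g_{n+1}(k):=(n-1)!\,n!\,\bq_n(k)$ establishes ${\cal P}_{n+1}^{(3)}$. The principal obstacle, as flagged above, is the rigorous control of the deformation argument that underpins the $p<n$ identity (and its RR counterpart), which is where the bulk of the combinatorial work must go.
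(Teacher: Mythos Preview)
Your target value $g_{n+1}(k)=(n-1)!\,n!\,\bq_n(k)$ agrees with the paper's, but the paper arrives there by deforming a different parameter. Instead of lowering $V_{\min}$ in $\LR$ and raising $V_r$ in $\RR$, the paper shrinks the distinguished delay $\Delta^\star$ to $0$ in \emph{both} models. This choice is deliberate and the paper explains why: the $\Delta^\star$-deformation acts symmetrically on $\LR$ and $\RR$, so a single crossing argument handles both; and at $\Delta^\star=0$ the attachment of the distinguished speed to the distinguished interval becomes vacuous, so each model collapses to a genuine Model~\ref{def:mC} instance on $n$ bullets with \emph{independently} permuted speeds and delays, whence ${\cal P}_n^{(1)}$ delivers $\bq_n$ directly. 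The crossings encountered while shrinking $\Delta^\star$ reduce to smaller $\LR$/$\RR$ pairs and are dispatched by the induction hypothesis. No decomposition by the position $p$, and no appeal to the recurrence for $\bq_n$, is needed.

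The concrete gap in your plan is on the $\RR$ side. For $\LR$, lowering $V_{\min}$ to $0$ has a clean endpoint (the zero-speed bullet is hit immediately by its successor), and the crossings you meet might conceivably be fed into ${\cal P}_m^{(2)}$ for $m\le n$ after adapting the machinery of Section~\ref{sec:dthdh} to the constrained setting where $V_{\min}$ is tied to the left of $\Delta^\star$---but you have not done this, and the paper flags precisely this speed--delay dependence as the reason for abandoning the speed deformation. For $\RR$ the analogue fails outright: raising $V_r$ has no terminal state comparable to $V_{\min}=0$. A very fast bullet fired from the right of $\Delta^\star$ collides with whichever earlier bullet is nearest and still alive at its firing time, a configuration-dependent target, so there is no uniform endpoint at which the $\RR$ count becomes computable. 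The paper's $\Delta^\star\to 0$ move was chosen exactly to sidestep this asymmetry between $\LR$ and $\RR$.
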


\begin{lem}\label{lem:ind1}Let $n\ge 2$. If ${\cal P}_m$ holds for all $m\le n$, then ${\cal P}_{n+1}^{(1)}$ holds.
\end{lem}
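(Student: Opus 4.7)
The plan is to derive ${\cal P}_{n+1}^{(1)}$ from Lemma~\ref{lem:qn_heriditary} applied with $n$ replaced by $n+1$. Condition~(i) of that lemma---namely $\bP^{\mC}_{\bV_m,\bDelta_{m-1}}=\bq_m$ for every $m\le n$ and every $(\bV_m,\bDelta_{m-1})\in \cG_m$---is immediate from the inductive hypothesis that ${\cal P}_m^{(1)}$ holds for every $m\le n$. All the remaining work therefore consists in verifying condition~(ii), namely
\[\bP^{\mC}_{\bV_{n+1},\bDelta_n}=\bP^{\mC}_{\bV_{n+1}^\downarrow,\bDelta_n}\]
for every essentially generic parameter $(\bV_{n+1},\bDelta_n)\in \cG_{n+1}$.

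Following Section~\ref{sub:crossing_a_single_singular_point}, I would continuously deform $\bV_{n+1}$ into $\bV_{n+1}^\downarrow$ along $\bV_\lambda=(1-\lambda)\bV_{n+1}+\lambda\bV_{n+1}^\downarrow$ for $\lambda\in[0,1]$. By Lemma~\ref{lem:TCS_open}, the distribution $\bP^{\mC}_{\bV_\lambda,\bDelta_n}$ is constant on each connected component on which $(\bV_\lambda,\bDelta_n)$ is generic; by essential genericity, the finitely many remaining values $0<\lambda_1<\dots<\lambda_k<1$ are honest simple singular parameters. The task therefore reduces to showing that, across each such $\lambda_i$, the left and right limits $\bP^{\mC}_{\bV+,\bDelta_n}$ and $\bP^{\mC}_{\bV-,\bDelta_n}$ coincide. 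Proposition~\ref{pro:real_pattern} refines the task further: for each minimal critical pattern $\pi=(\min\bV_{\lambda_i},v_\ell,v_r,d_\ell,d_r)$ at such a singular parameter, the numbers of configurations in $\cal C_\pi\cap\cal R_\pi$ having $k$ surviving bullets must be shown to agree on the $\bV+$ and $\bV-$ sides, for every $k\ge 0$.

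The crux of the argument is to realize these two counts as $\LR$ and $\RR$ cardinalities. For a configuration in $\cal C_\pi\cap\cal R_\pi$: near the triple encounter point, in $(\bV+,\bDelta_n)$ the pair with speeds $v_\ell,v_r$ mutually annihilate and $V_{\min}$ persists, whereas in $(\bV-,\bDelta_n)$ the pair $V_{\min},v_\ell$ mutually annihilate and $v_r$ persists (see Figure~\ref{fig:FDN2}). In both cases the bullet with speed $v_\ell$ is completely removed from the remaining space--time diagram. Collapsing the two delays $d_\ell,d_r$ into a single distinguished delay $\Delta^{\star}=d_\ell+d_r$ and keeping the other $n-2$ speeds and $n-2$ delays, one obtains a parameter $(\bV^\star_n,\bDelta^\star_{n-1})\in\cG_n$ together with distinguished speeds $V_{\min}$ and $V_r:=v_r$. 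The realization constraint for $\pi$ translates exactly into the requirement that no other bullet crosses the side of the critical bi-triangle corresponding to the suppressed bullet, which is precisely the condition $A=\{0\}$ with $s=H(\bV^\star_n,\bDelta^\star_{n-1})$. This yields bijections identifying the set of configurations in $\cal C_\pi\cap\cal R_\pi$ giving $k$ survivors on the $\bV+$ (resp.\ $\bV-$) side with $\LR(\bV^\star_n,\bDelta^\star_{n-1},H,\{0\},k)$ (resp.\ $\RR(\bV^\star_n,\bDelta^\star_{n-1},H,\{0\},k)$). The inductive hypothesis ${\cal P}_n^{(2)}$ then supplies the desired equality of cardinalities and thus the equality of the contributions to $\bP^{\mC}_{\bV\pm,\bDelta_n}$.

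The main obstacle is the careful bookkeeping behind the bijection between $\cal R_\pi$ configurations of the $(n+1)$-bullet problem and configurations of the $\LR/\RR$ models. Specifically, one must check that the derived parameter $(\bV^\star_n,\bDelta^\star_{n-1})$ indeed belongs to $\cG_n$ (a consequence of the fact that the original parameter is honest simple singular, so only the three bullets of $\pi$ meet concurrently); verify that every $\LR$ (resp.\ $\RR$) configuration arises from a unique $\cal R_\pi$ configuration via the merging of $d_\ell,d_r$ and the (virtual, resp.\ actual) reinsertion of $v_\ell$ at the middle of the distinguished interval; and confirm that the total survival count is preserved---in particular, the bullet that persists past the near-triple encounter ($V_{\min}$ in $\bV+$, $v_r$ in $\bV-$) may still be annihilated by subsequent collisions, an effect that is faithfully tracked by the dynamics of the $\LR/\RR$ models. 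Once these items are handled, condition~(ii) of Lemma~\ref{lem:qn_heriditary} holds and ${\cal P}_{n+1}^{(1)}$ follows.
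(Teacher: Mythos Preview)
Your proposal is correct and follows essentially the same route as the paper: reduce via Lemma~\ref{lem:qn_heriditary} and Proposition~\ref{pro:real_pattern} to minimal realized critical patterns, remove $v_\ell$, merge $d_\ell,d_r$ into $\Delta^\star$, and identify the $\bV+$/$\bV-$ counts with $\LR/\RR$ at $s=H$, $A=\{0\}$. Your invocation of ${\cal P}_n^{(2)}$ is in fact the right one (the paper's text cites ${\cal P}_n^{(3)}$, but that is stated only for $s=0$; since the full hypothesis ${\cal P}_n$ is assumed, the distinction is harmless), and one small phrasing slip: the segment $S$ lies along the $v_r$ side of the bi-triangle, not the suppressed $v_\ell$ line, though your subsequent identification with $A=\{0\}$ and $s=H$ shows you have the correct constraint in mind.
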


\begin{lem}\label{lem:ind2}Let $n\ge 2$. If ${\cal P}_n$ holds, then ${\cal P}_{n+1}^{(2)}$ holds.
\end{lem}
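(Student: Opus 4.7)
\textit{Plan.} The goal is to prove that for each $(\bV_{n+1}, \bDelta_n)\in\cG_{n+1}$, each $s\le H$, each $k\ge 0$, and each $A \in \{\{0\}, \mathbb{Z}_+, \mathbb{Z}_+\setminus\{0\}\}$, the two cardinalities $|\LR(\bV_{n+1},\bDelta_n,s,A,k)|$ and $|\RR(\bV_{n+1},\bDelta_n,s,A,k)|$ coincide. The relation $\mathbb{Z}_+ = \{0\} \sqcup (\mathbb{Z}_+\setminus\{0\})$ means that only two of the three cases need to be treated; I would handle $A = \mathbb{Z}_+$ and $A=\{0\}$ separately, and obtain the third by subtraction.

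\textit{Case $A = \mathbb{Z}_+$.} Since there is no constraint from $S$, this is a comparison between unrestricted survivor counts. In $\LR$, the $V_{\min}$-bullet is the slowest in $\bV_{n+1}$, so its fate admits a very simple description: either $\bar T^\tau_\scl$ is the final shot (in which case $V_{\min}$ survives), or some subsequent bullet $V_\ast$ eventually catches it. I would decompose $\LR$-configurations by the identity of $V_\ast$ (or the absence thereof); removing the pair $(V_{\min},V_\ast)$ and merging the three affected delays into one yields a smaller $\LR$-instance to which the induction hypothesis ${\cal P}_n^{(2)}$ applies. A mirror analysis tracking the $V_r$-bullet in $\RR$ produces the same kind of decomposition, and matching the two term by term gives the desired equality of cardinalities.

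\textit{Case $A = \{0\}$.} This is the technical heart of the proof. I would induct on the parameter $s$: the function $s\mapsto |\LR(\bV_{n+1},\bDelta_n,s,\{0\},k)|$ is piecewise constant with finitely many downward jumps, located at the discrete heights at which a virtual trajectory of a non-distinguished bullet meets the line $\L(V_r, \bar T^\tau_\scr)$ supporting $S$. At each such threshold $s_0$, the configurations dropped from $\LR(\ldots,\{0\},k)$ are precisely those in which a specific \emph{crossing} bullet $b$ is still alive at the moment its virtual trajectory reaches height $s_0$ on $S$. Removing $b$ splits such a configuration into an upstream part (the bullets shot before $b$) and a downstream part (the bullets shot after), each of which should be recast as an $\LR$- or $\RR$-type instance of size strictly less than $n+1$, with adjusted parameters. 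The induction hypothesis ${\cal P}_n^{(2)}$ then provides the matching step for $\RR$, while ${\cal P}_n^{(3)}$ is used at the base of the recursion to compare sub-problems whose inherited parameters are not directly comparable.

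\textit{Main obstacle.} The hardest part is the bookkeeping in the $A=\{0\}$ case: specifically, checking that the removal of the crossing bullet $b$ at the threshold $s_0$ yields two sub-problems that genuinely belong to the $\LR$/$\RR$ family, and whose counts reassemble correctly into the original cardinality. The constraint $A=\{0\}$ couples the distinguished bullet to the non-distinguished ones in a non-local way, so the factorization into upstream and downstream sub-problems cannot be naive; one must also take care to avoid double-counting when a configuration contributes to more than one threshold jump. The appeal to ${\cal P}_n^{(3)}$ is almost certainly essential at the terminal step of the reduction, where the parameters $(\bV,\bDelta)$ inherited by the two sub-problems no longer match in any natural way, and only the universality of $g_n(k)$ lets the $\LR$ and $\RR$ side-counts be identified.
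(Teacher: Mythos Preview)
Your high-level plan is reasonable, and handling $A=\mathbb Z_+$ together with $A=\{0\}$ (rather than the paper's pair $A=\mathbb Z_+$ and $A=\mathbb Z_+\setminus\{0\}$) is equivalent by complementation. Both branches, however, have genuine gaps.

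\medskip
\emph{Case $A=\mathbb Z_+$.} The ``mirror analysis tracking the $V_r$-bullet in $\RR$'' does not exist. In $\LR$ the distinguished bullet carries the minimum speed and can only be caught from behind; in $\RR$ the distinguished bullet $V_r$ is a generic speed and can be hit from the left by a faster earlier bullet, can catch a slower earlier bullet, or be caught from behind---there is no clean decomposition by ``the bullet that kills $V_r$''. Moreover, removing $V_{\min}$ (or $V_r$) together with its partner destroys the distinguished pair $(\Delta^\star,V_{\min})$ (resp.\ $(\Delta^\star,V_r)$), so the residual problem is not a smaller $\LR$- or $\RR$-instance and ${\cal P}_n^{(2)}$ does not apply to it. The paper bypasses all of this: since $A=\mathbb Z_+$ imposes no constraint, the count is independent of $s$, and taking $s=0$ reduces the statement to ${\cal P}_{n+1}^{(3)}$, which is already available from ${\cal P}_n$ via Lemma~\ref{lem:ind3}. (You write ${\cal P}_n^{(3)}$ here; note the size mismatch.)

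\medskip
\emph{Case $A=\{0\}$.} Your induction on $s$ is a correct instinct and is dual to the paper's treatment of $A=\mathbb Z_+\setminus\{0\}$: the jump of $|\LR(\ldots,s,\{0\},k)|$ at a threshold $s_0$ counts exactly the configurations whose lowest real crossing of $S$ sits at height $s_0$, and summing over $s_0\le s$ recovers the paper's decomposition of $\mathbb Z_+\setminus\{0\}$ by the lowest crossing line $\gamma$. But your proposed mechanism for each jump---splitting into ``upstream'' (bullets shot before $b$) and ``downstream'' (bullets shot after $b$)---does not work: bullets shot after $\gamma$ can cross $\gamma$ and interact with those shot before, so there is no temporal factorisation into independent sub-problems. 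The paper's key idea is geometric, not temporal: follow $\gamma$ leftward from $S$ and ask whether it reaches the left side $LS$ of the special triangle unhit. If it does, then $\gamma$ collides with the distinguished bullet in both models (with $V_{\min}$ along $LS$ in $\LR$, with $V_r$ along $RS$ in $\RR$), and outside a small triangle the two space-time diagrams are literally identical---this yields a direct set equality, no induction needed. If instead $\gamma$ is intercepted by some line $\gamma'$ before reaching $LS$, then in $\LR$ the pair $(\gamma,\gamma')$ collides while in $\RR$ the pair $(\gamma,RS)$ collides; removing the two colliding bullets and declaring the initial portion of $\gamma'$ to be the new constraint segment produces a genuine $\LR$/$\RR$ pair of size at most $n$, to which ${\cal P}_n^{(2)}$ applies. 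Neither of these two cases is captured by an upstream/downstream split, and the first one (direct bijection, no induction) is entirely absent from your outline.
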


Observe that Lemma~\ref{lem:ind1} is what makes the link between the original bullet colliding problem and the models with constraints described above precise.  The proofs of these three Lemmas are presented in the next Sections..

\subsection{Proof of Lemma~\ref{lem:ind1}} 
\label{sub:proof_of_lemma1}
{
Since $\cP_{m}$ holds for $m\le n$, by Lemma~\ref{lem:qn_heriditary}, it suffices to prove that \eqref{eq:local_invariance} holds for all $(\bV_{n+1}, \bDelta_n)\in \cG_{n+1}$ that is essentially generic. Then, by the arguments in Section~\ref{sub:crossing_a_single_singular_point} this reduces to comparing the law of the number of surviving bullets for $(\bV^\star_{n+1}+,\bDelta_n)$ and $(\bV^\star_{n+1}-,\bDelta_n)$, where $(\bV_{n+1}^\star, \bDelta_n)$ is a simple singular parameter $\Theta_{n+1}$.

By Proposition~\ref{pro:real_pattern}, in order to compare $(\bV^\star_{n+1}+,\bDelta_n)$ and $(\bV^\star_{n+1}-,\bDelta_n)$ it suffices to consider configurations for which there is some minimal critical pattern with respect to $(\bV^\star_{n+1},\bDelta_n)$ that is realised and involves the minimum speed. 

There is a unique critical pattern for $(\bV^\star_{n+1},\bDelta_n)$ and it is of the form $\pi=(V_1, V_\ell, V_r, d_\ell, d_r)$ where both $d_\ell$ and $d_r$ are components of $\bDelta_n$. For any configuration $(\sigma,\tau)$ where $\pi$ is realized, }
\begin{itemize}
  \item in $(\bV^\star_{n+1}+,\bDelta_n)$ the bullets with speeds $V_{\ell}$ and $V_r$ collide;
  \item in $(\bV^\star_{n+1}-,\bDelta_n)$ the bullets with speeds $V_1=\min \bV_{n+1}^\star$ and $V_\ell$ collide.
\end{itemize}
Note that in both models, $V_\ell$ does not play a role after the triple-collision, and it will be suppressed from the parameter in the following. More precisely, writing $\Delta^\star=d_\ell + d_r$, and reorganizing the components of the parameter into
\[
\begin{array}{ll}
\bV_n' = (V_2,\dots, V_{n-1}, V_1, V_r) \in \R^n_+ & \text{where the speed }V_\ell \text{ has been removed},\\
\bDelta_{n-1}' = (\Delta_1,\dots, \Delta_n, \Delta^\star) \in \R^{n-1}_+ & \text{since $d_\ell$ and $d_r$ have been merged into }\Delta^\star
\end{array}
\]
we are precisely led to proving that 
\begin{equation}\label{eq:remove_crit_triangle}
\l|\LR\l(\bV'_{n},\bDelta_{n-1}',s,\{0\},k\r)\r|= \l|\RR\l(\bV_n', \bDelta_{n-1}',s, \{0\},k\r)\r|\qquad \text{for all } k\ge 0,  
\end{equation}
for $s=H(\bV_n,\bDelta_{n-1})$. 
Since  we assumed $\cP^{(2)}_n$, \eqref{eq:remove_crit_triangle} holds and, in turn, 
\[\bP^\mC_{\bV^\star_{n+1}+,\bDelta_n} = \bP^\mC_{\bV^\star_{n+1}-,\bDelta_n}\,,\]
which completes the proof.

\subsection{Proof of Lemma~\ref{lem:ind2}} 
\label{sub:proof_of_lemma2}

First note that it suffices to establish the formula for $A=\Z_+$ and for $\Z_{+}\setminus\{0\}$ since one can then recover the case $A=\{0\}$ by a simple difference. Suppose that $\cP_m$ hold for all $m\le n$ and fix $(\bV_{n+1},\bDelta_n)\in \cG_{n+1}$.

\medskip

\noindent \emph{The case $A=\Z_+$.} In this case, there is no constraint on the number of trajectories intersecting the special segment $S$. As a consequence, the previous arguments (that say that the case $A=\Z_+$ is equivalent to the case $s=0$) show that $\cP_{n+1}^{(3)}$ implies the desired property, and Lemma~\ref{lem:ind3} proves that $\cP_n$ implies $\cP_{n+1}^{(3)}$ (and is proved independently of the current lemma later on).

\medskip
\noindent \emph{The case $A=\Z_{+}\setminus \{0\}$}. 
\begin{figure}[tbp]
\centerline{\includegraphics{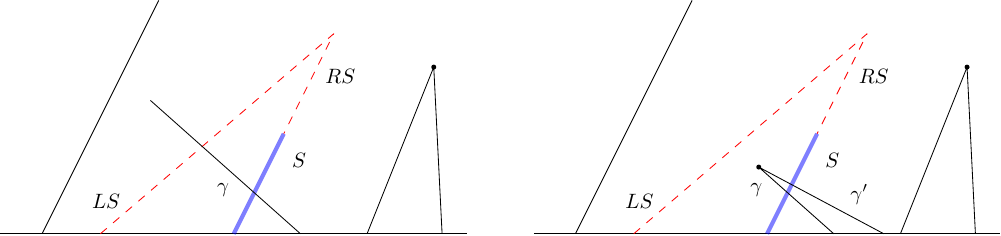}} 
\caption{\label{fig:PLO}Illustration for the proof of Lemma~\ref{lem:ind2}: Either $\gamma$ touches $LS$ or it is intersected before by some other line. (The lines $\gamma$ and $\gamma'$ have been chosen with negative slopes only for the sake of clarity of the representation.)}
\end{figure} 
In each of the configurations involved, some of the $\bar \HL_i$ hit the segment $S$ from the right. Consider those for which the lowest intersection point in $S$ with these half lines is reached by a given half line $\gamma$. Now, denote by $RS$ and $LS$ the right and left sides of the triangle formed by the line segments with speed $V_{\min}$ and $V_r$. There are two cases (represented in Figure~\ref{fig:PLO}) depending on whether $\gamma$ is intersected by some other half-line before reaching $LS$ or not:
\begin{enumerate}[(a)]
  \item \emph{if $\gamma$ touches $RS$ and $LS$ without being intersected}: in this case, we prove directly (without the induction hypothesis) that, for all $k\ge 0$, we actually have equality of the two sets
  \begin{equation}\label{eq:gamma}
  \LR(\bV_{n+1},\bDelta_{n},s,\Z_+\setminus\{0\},k,\gamma) = \RR(\bV_{n+1},\bDelta_{n},s,\Z_+\setminus\{0\},k,\gamma)\,,
  \end{equation}
where we added the entry $\gamma$ to $\LR$ and $\RR$ to denote the set of configurations corresponding to this situation.
  For any configuration $(\sigma,\tau)\in \LR(\bV_n,\bDelta_{n-1},s,\Z_+\setminus\{0\},k,\gamma)$, 
  it is clear that $(\sigma,\tau)\in \cup_{\ell\ge 0}\RR(\bV_n,\bDelta_{n-1}, s, \Z_+\setminus \{0\},\ell)$, the only thing to prove is that taking $(\sigma,\tau)$ as a configuration of $\RR$, there are precisely $k$ surviving bullets. To see this, observe first that the line $\gamma$ touches $RS$ and $LS$ without being hit in the LR model, and hits $RS$ in the $\RR$ model. Consider the triangle $T$ formed by the horizontal line, the direction $\gamma$ and $LS$. Now, the bullet whose trajectory follows $\gamma$ (see Figure~\ref{fig:firstpart}):
\begin{itemize}
  \item hits the bullet with minimum speed whose trajectory follows $LS$ in $\LR$, and
  \item hits the bullet with speed $V_r$ whose trajectory follows $RS$ in $\RR$.
\end{itemize}
Therefore, in each case, the two corresponding bullets collide. Now, the rest of the configuration is unaffected by any of the collisions, because everything in both cases, happens inside the triangle $T$, which is not intersected in both models. As a consequence, if there are $k$ surviving bullets in $\LR$, there are also $k$ surviving bullets in $\RR$. One easily sees that the argument actually proves the equality of the two sets in \eqref{eq:gamma}\footnote{We note here that we do not need to assume that the line $\gamma$ is the line that immediately come after $RS$. There may well be some bullets whose (real) trajectories are trapped in the triangle formed by $RS$ and $\gamma$; these are precisely the same in $\LR$ and $\RR$ because of our assumption that $\gamma$ is the lowest line hitting $RS$.}. 

  \begin{figure}[htbp]
\centering
\includegraphics[width=16cm]{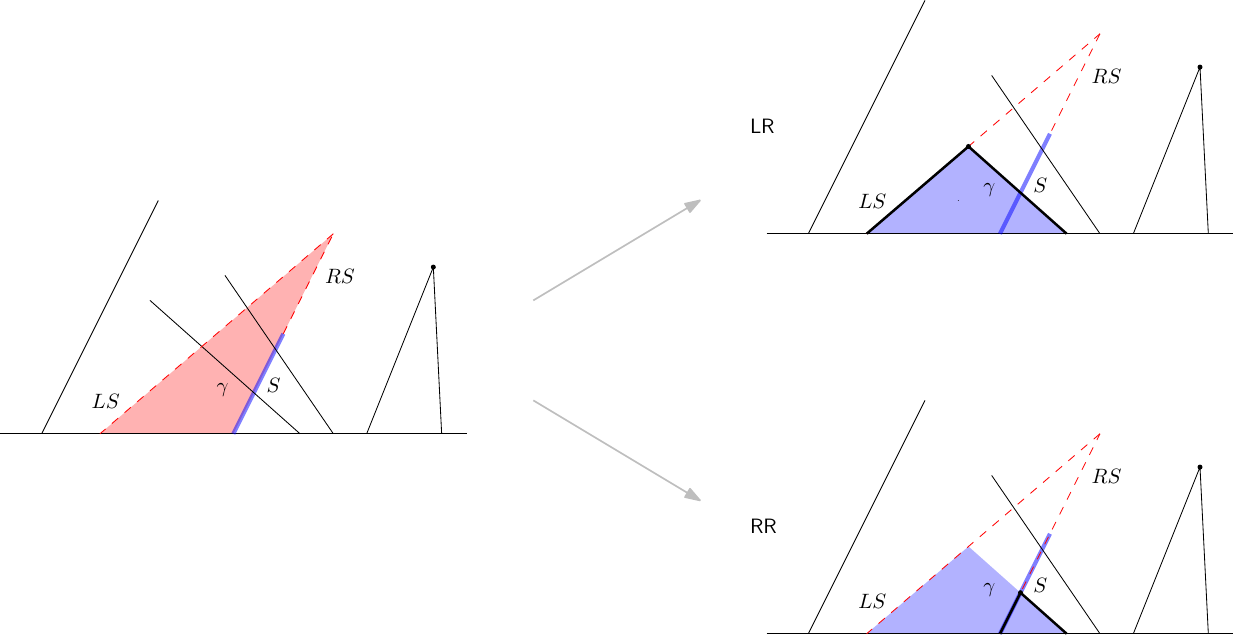}
\caption{\label{fig:firstpart}An illustration for the proof of Lemma~\ref{lem:ind2} (a): A configuration where the line $\gamma$ is not intersected before hitting $LS$: on the right we show the corresponding space-time diagrams in Models~\ref{def:LR} and~\ref{def:RR} where a bullet is shot along $LS$ or $RS$, respectively. It is important to note that except inside the blue shaded triangle, the two space-time diagrams are identical; as a consequence, the induction hypothesis is not necessary in this case.}
\end{figure}

  \item \emph{if $\gamma$ touches $RS$ and is intersected by some half line}, say $\gamma'$ before touching $LS$: in this case, we need the induction hypothesis to prove that, for all $k\ge 0$,
  \begin{equation}\label{eq:gamma'}
  \l|\LR(\bV_n,\bDelta_{n-1},s,\Z_+\setminus\{0\},k,\gamma,\gamma')\r|=\l|\RR(\bV_n,\bDelta_{n-1},s,\Z_+\setminus\{0\},k,\gamma,\gamma')\r|\,,
  \end{equation} 
where we added the entries $\gamma$ and $\gamma'$ to $\LR$ and to $\RR$ to denote the set of configurations corresponding to this situation. Any configuration $(\sigma,\tau)$ in $\cup_{k\ge 0}\LR(\bV_n,\bDelta_{n-1},s,\Z_+\setminus\{0\},k,\gamma,\gamma')$ is also a configuration of $\cup_{k\ge 0} \RR(\bV_n,\bDelta_{n-1},s,\Z_+\setminus\{0\},k,\gamma,\gamma')$, and reciprocally. Fix a configuration, that can be seen in both models. By assumption, we suppose that the half-lines $\gamma$ and $\gamma'$ intersect at some point $p$, before reaching $LS$  (see Figure~\ref{fig:secondpart}). Note that, in this configuration, any portion of the space-time diagram that is shot between $LS$ and $\gamma$, or between $\gamma$ and $\gamma'$ must remain trapped between $LS$ and $\gamma$, and $\gamma$ and $\gamma'$, respectively (see the two coloured regions in Figure~\ref{fig:secondpart}); this must be the case both in $\LR$ and $\RR$. As a consequence, none of the corresponding bullets may survive in any of the two models; for this reason, we can ignore them, and we now suppose that no bullet is shot between $LS$ and $\gamma$ or $\gamma$ and $\gamma'$.
  Now,
  \begin{itemize}
    \item in $\LR$, the two bullets whose trajectory follow $\gamma$ and $\gamma'$ indeed collide at the point $p$. The rest of the space-time diagram is a set of half-lines that does not intersect $\gamma'$ before the point $p$. The rest of the configuration is just constrained not to hit the portion of $\gamma'$ before $p$, that we call $S'$. There are $k$ surviving bullets precisely if this smaller configuration lies in $\LR(\bV'_{n-1}, \bDelta'_{n-2}, s', S', \{0\}, k)$, where $\bV_{n-1}'$, $\bDelta_{n-2}'$ are obtained by removing the necessary speeds, and merging the delays between $LS$ and $\gamma'$, and $s'$ denotes the second coordinate of $p$.

    \item in $\RR$, the bullet following $\gamma$ collides with the one following $RS$. The rest of the configuration has the same constraint that the line segment $S'$ of $\gamma'$ before the point $p$ is not intersected by any real trajectory. As a consequence, there are $k$ surviving bullets precisely if that smaller configuration lies in $\RR(\bV'_{n-1}, \bDelta_{n-2}', s', \{0\}, k)$, where it is important to note that $\bV'_{n-1}$, $\bDelta'_{n-2}$ and $s'$ are the same as above.
  \end{itemize}
  Next observe that there is a one-to-one map between the configurations of $n+1$ lines and the ones with $n-1$ lines\footnote{because we removed the ``trapped'' portions; otherwise it would be many-to-one, but the counting would still work since the constraints on the ``trap'' are the same in both models.}. It follows by the induction hypothesis that for each $k$, the sets $\LR(\bV'_{n-1}, \bDelta_{n-2}, s', \{0\}, k)$ and $\RR(\bV'_{n-1}, \bDelta_{n-2}, s', \{0\}, k)$ have the same cardinalities, and as a consequence, that \eqref{eq:gamma'} holds for all $k$.  
\end{enumerate}

\begin{figure}[htbp]
\centering
\includegraphics[width=16cm]{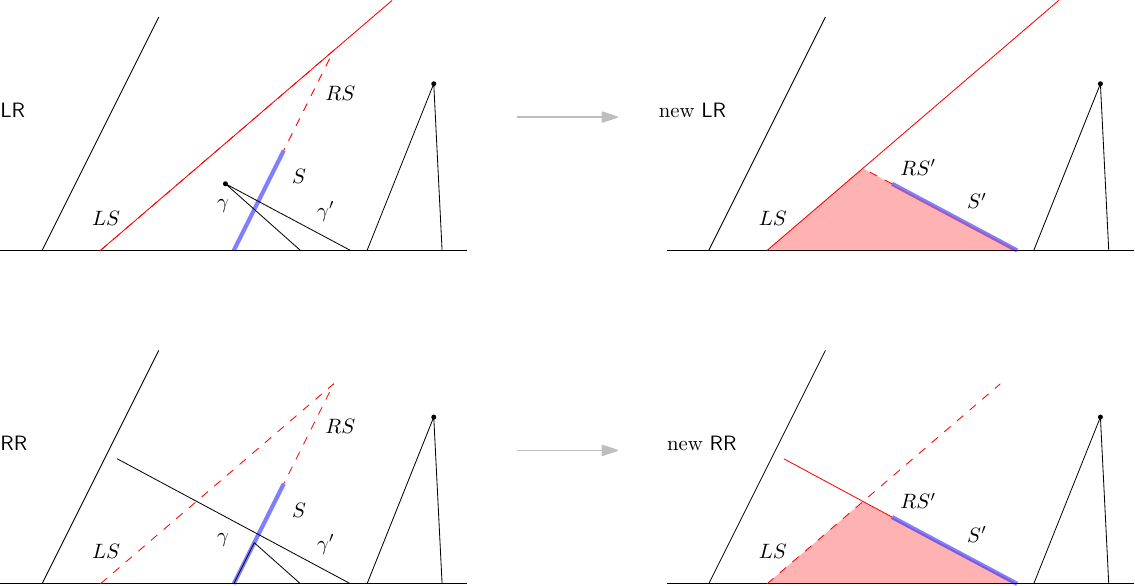}
\caption{\label{fig:secondpart}An illustration for the proof of Lemma~\ref{lem:ind2} (b): The space-time diagrams in Models~\ref{def:LR} and~\ref{def:RR} of a single configuration where $\gamma$ is hit by some line $\gamma'$ before hitting $LS$ (the one from Figure~\ref{fig:PLO}). One can transform the configuration into one for new matching $\LR$ and $\RR$ problems, where the critical pattern is modified. The new critical bi-triangle is shown shaded in red. The instance is of smaller size, and allows to use the induction hypothesis.}
\end{figure}

\subsection{Proof of Lemma~\ref{lem:ind3}}

As before, the general idea consists in proving that we can modify the parameters $(\bV_n, \bDelta_{n-1})$, making sure that we modify the statistics in either $\LR$ or $\RR$, until we arrive to a situation where we can without a doubt assert that these statistics are equal. In the previous proofs, the crucial modification consisted in decreasing the speed of the slowest bullet. Here, we rely on a different modification for the following reasons:
\begin{itemize}
	\item Although the models $\LR$ and $\RR$ seem to be images of one another under some natural symmetry, it is not the case. In particular, the fact that the slowest speed is the one that lies to the left of the special interval of length $\Delta^\star$ ruins the nice symmetries; for instance, no bullet may hit the slowest bullet from the left while it is certainly possible that a bullet hits the one shot right after $\Delta^\star$ from the right. 
	\item More importantly, recall that we said that the distribution of the number of surviving bullets \emph{does not in general remain the same} if the permutations $\sigma$ and $\tau$ of the speeds and delays are not independent. Here, the fact that the slowest speed is shot right before the interval of length $\Delta^\star$ creates a dependence which makes difficult to reduce the question to the initial colliding bullet problem where the intervals and speeds are all permuted independently.
\end{itemize}

Observe that 
 \[\LR(\bV_n,\bDelta_{n-1},0,\{0\}, k)
 \qquad \text{and} \qquad 
 \RR(\bV_n,\bDelta_{n-1},0,\{0\}, k)\] 
 are sets of configurations in a model where $n-1$ bullets are fired, but since the special segment $S$ has length zero (and no bullet with speed 0, for the case when $\min \bV_n=0$ can be treated directly), there are no restriction of any kind. In fact, we will prove a bit more than what is needed: we will prove that the equality of the statistics holds even if the speed $V_{\min}$ attached to the left of the special interval $\Delta^\star$ is any fixed speed (minimal or not). In other words, \emph{we do not assume that $V_{\min}$ is the minimal speed anymore in this section, but keep the name because the order of the speeds in $\bV_n$ has been defined with this name ($V_{\min}$ has been placed in second to last position). }

Since the permutations $(\sigma,\tau)$ we consider let attached $\Delta^\star$ and $V_{\min}$ (or $\Delta^\star$ and $V_r$), the statistics $\LR(\bV_n,\bDelta_{n-1},0,\{0\}, \cdot )$ (resp.\ $\RR(\bV_n, \bDelta_{n-1}, 0, \{0\}, \cdot)$) are only clearly given by $\bq_{n-1}$ when $\Delta^\star=0$, since this case corresponds exactly to $\bP^{\mC}_{\bV_{n-1},\bDelta_{n-2}}$ where $\bDelta_{n-2}$ is obtained from $\bDelta_{n-1}$ by removing $\Delta^\star$ and $\bV_{n-1}$ is obtained from $\bV_n$ by suppressing $V_r$ (resp.\ $V_{\min}$).

The idea of the proof is similar to that of Lemma~\ref{sub:proof_of_lemma2}, but rather than decreasing the minimal speed, we decrease the length $\Delta^\star$. Proceeding in this way addresses the two issues mentioned above: it does act symmetrically on $\LR$ and $\RR$ and, eventually, when $\Delta^\star=0$, the dependence between the permutations of the speeds and delays vanish. In the following, we consider only one of the problems $\LR$ or $\RR$: it important to understand that \emph{we do not compare directly $\LR$ and $\RR$} which seems difficult; we proceed by justifying that we can reduce $\Delta^\star$, without changing the statistics, until it reaches zero, at which point, $\LR$ and $\RR$ are identified. 
 
The core of the argument still relies on the kind of decompositions and reductions of the configurations that we have already treated in detail earlier and which should be familiar to the reader by now. So, since we have mentioned the main difficulty and the differences with the previous arguments, we allow ourselves to be quicker and only sketch the argument. 

When decreasing $\Delta^\star$ at the same time in $\LR$ and $\RR$, if the TCS is not modified, then the statistics are not modified. So we focus on the situations when the TCS does change: there exists configurations $(\sigma,\tau)$ for which one line from either side (before or after $\Delta^\star$) crosses the intersection of two lines originating from the other side. Only the configurations for which such a situation occurs need to be considered; furthermore, only the configuration for which the crossing indeed modifies the set of surviving bullets (at least locally) matter. This allows for the definition of notion of critical pattern or bi-triangle similar to the one is Section~\ref{ssec:singular-critical} (see Figure~\ref{fig:deuxieme_reduction}). As before, an induction argument allows to suppose that the critical pattern is minimal. Altogether, we are led to the situations described in Figure~\ref{fig:deuxieme_reduction}. Putting everything together, crossing a critical point of $\Delta^\star$ reduces to verifying that the statistics of $\LR$ and $\RR$ are identical for a problem of smaller size, and thus the induction hypothesis applies. 

\begin{figure}[htbp]
\centerline{\includegraphics[width=16cm]{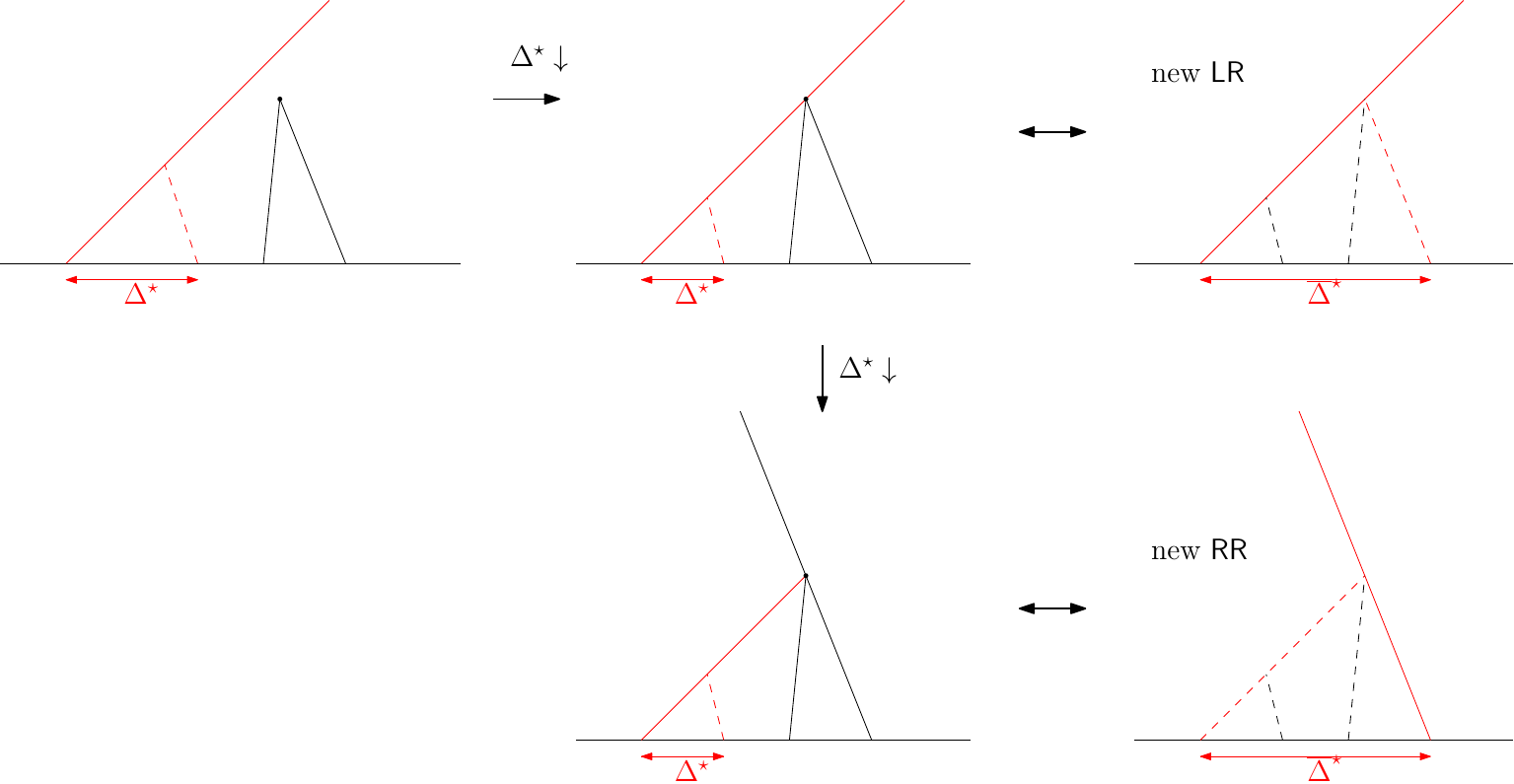}}
\caption{\label{fig:deuxieme_reduction}An illustration for the proof of Lemma~\ref{lem:ind3}: The typical effect of diminishing the value of $\Delta^\star$: one reaches a critical value for which there is a critical pattern involving three lines. The red lines are there to help comparing the statistics before and after the `crossing' of this critical value, which reduces to comparing the statistics of a couple of models $\LR$ / $\RR$ involving a smaller number of bullets.}
\end{figure}

\section{Remaining proofs}
\label{sec:pf_other}

\subsection{Proof of Theorem~\ref{theo:other}} 
\label{sub:proof_of_theorem_ref}

\noindent (i) \textsc{Bullet Flock.} Observe that the eventual number of surviving bullets only depends on the order of the speeds, not on their specific values. It does not depend either on the interbullet delays since the collisions are instantaneous. 
 Assume that the bullet with minimal speed is shot at time $i$ and that its speed is $v_i$. Being the slowest, it does not influence any of the bullets shot at times $1$, $2$, $\dots$, $i-1$. Furthermore, it will be hit by the next bullet $i+1$, except of course if there is no such bullet  that is, if $i=n$ which happens with probability $1/n$. Of course, the behaviour of the bullets shot from time $i+2$ is not influenced. Removing the bullets $i$, and $i+1$ when the latter exists leaves the system in an exchangeable situation again, with $n-1$ or $n-2$ bullets. The result follows readily from the recursive definition of $\bq_n$.

\medskip 
\noindent (ii) \textsc{Odd cycles in permutations}. We proceed again by induction on $n$.
Consider the element with label $1$ and the cycle ${\cal C}$ containing it. A straightforward computation shows that the length $|{\cal C}|$ of ${\cal C}$ is 1 with probability $1/n$ (in which case, $1$ is a fixed point). If $|{\cal C}|>1$, then let $b$ be the image of $1$ in the permutation (the number just after it around ${\cal C}$). By symmetry, $b$ is uniform in $\{2,\cdots,n\}$. Let also $c$ be the image of $b$, and $z$ the preimage of $1$. Note that we might have $c=1$ and $z=b$ if $|\mathcal C|=2$, or $z=c$ if $|\mathcal C|=3$. Consider now the cycle structure obtained as follows:
\begin{itemize}
  \item suppress the elements $1$ and $b$ from this cycle representation;
  \item if $z\ne b$, modify its image in the permutation so its new image is $c$.
\end{itemize}
A simple combinatorial argument shows that the remaining structure is the cycle representation of a permutation that is uniformly distributed in the symmetric group on $S'=\{1,2,\cdots,n\}\setminus \{1,b\}$: indeed, from a given permutation on $S'$ with cyclic representation $(C'_1,\cdots,C'_\ell)$ all the potential initial permutations can be obtained by 
\begin{itemize}
  \item either adding the cycle $(1,b)$ on its own,
  \item or inserting the linked pair $1,b$ to the right of any element in one of the cycles $C'_j$;
  the number of choices for the location of this insertion equals $\sum_j |C'_j|=n-2$, and thus does not depend on $(C_1',\dots, C_\ell')$.
\end{itemize}
This previous decomposition immediately yields the recurrence relation that defines $\bq_n$.


\medskip 
\noindent (iii) \textsc{Two-step directed tree}. In this case, the claim is straightforward since for every $n$, the distance to $0$ clearly satisfies the recurrence relation \eqref{eq:law_Xn} by construction.

\subsection{Proof of Proposition~\ref{pro:limit_dist}} 
\label{sub:limit_distribution}

The limit distribution for a random variable $X_n$ under $\bq_n$ follows from the combinatorial decomposition. Recall that, by Theorem~\ref{theo:other}, $X_n$ is distributed as the number of cycles of odd length in a uniformly random permutation of length $n$.
Since a permutation is a set of cycles, classical combinatorial decomposition \cite{FlSe2009a} yields that the bivariate generating function counting permutations where the cycles are marked by $u$ and size by $z$ is
\[P(z,u)= \sum_{n\ge 0} \sum_{k\ge 1} p_{n,k} u^k \frac{z^n}{n!} = \exp(- u \log (1-z)),\]
where $p_{n,k}$ is the number of size $n$ permutations with $k$ cycles multiplied by $n!$.
Here, we want the simple modification $P^\circ(z,u)$ of $P(z,u)$ where $u$ only marks the cycles of odd length, and we let $P(z,u)$ be the corresponding generating function. We find
\begin{align*}
P^\circ(z,u)
& =\exp\bigg( u \bigg[-\log(1-z) + \frac 1 2 \log(1-z^2)\bigg] - \frac 1 2 \log (1-z^2)\bigg)\\
& =(1+z)^{(u-1)/2} \cdot (1-z)^{-(1+u)/2}.
\end{align*}
The only two potential singularities are $z=\pm 1$, and as a function of $z$, the generating function $P^\circ(z,u)$ is clearly analytic in the following disk with two dents:
\[\mathcal D := \{z\in \mathbb C:  |z|\le 2, \arg(z-1)>\pi/12, \arg(1-z)>\pi/12 \}.\] 
If $u=1$, then $P^\circ(z,1)=1/(1-z)$ and there is a unique singularity at $1$; otherwise, for any $u$ in a complex punctured neighborhood $U$ of $1$, $P(z,u)$ has two singularities at $z\in\{+1,-1\}$. In this case, the domain $\cal D$ is what is referred in \cite{FlSe2009a} as a $\Delta$-domain and the singularity analysis transfer theorem implies that, as $n\to \infty$, the main contribution comes from $z=1$ (the other one has a lower order polynomial growth in $n$) and we have  
\[[z^n] P^\circ(z,u) = \frac1{n!} \sum_{k\ge 1} p^\circ_{n,k} u^k 
\sim \frac {2^{(u-1)/2}}{\Gamma(\frac{1+u}2)} n^{(u-1)/2}.
\]
We shall need a uniform estimate for $u\in U$, and we must look into the contribution of $(1+z)^{(u-1)/2}$ more carefully: standard binomial expansion yields the exact formula
\[[z^n](1+z)^{(u-1)/2} = \frac {\Gamma(\frac{1-u}2+n)}{\Gamma(\frac{1-u}2) \Gamma(n)}.\]
For any $\epsilon>0$, choosing $U$ to be the punctured ball of radius $2\epsilon$ around $1$, it follows that, uniformly in $U$, as $n\to\infty$,
\[\left|[z^n](1+z)^{(u-1)/2}\right| \le \frac{\Gamma(n+\epsilon)}{\Gamma(\epsilon) \Gamma(n+1)} \sim \frac{n^{\epsilon-1}}{\Gamma(\epsilon)}.\]
Standard manipulations then imply that the probability generating function $f_n(u)= \mathbf E[u^{X_n}]$ satisfies, again uniformly in $U$ provided that $\epsilon \in (0,1)$,
\[f_n(u) = \frac{[z^n] P^\circ(z,u)}{[z^n] P^\circ(z,1)} \sim \frac {2^{(u-1)/2}}{\Gamma(\frac{1+u}2)} e^{\frac 1 2(u-1) \log n}.\]
The quasi-powers theorem (Theorem IX.8 of \citet{FlSe2009a}) immediately yields that 
$\mathbf{E} X_n \sim \frac 1 2 \log n$, $\mathbf{Var}(X_n) \sim \frac 1 2 \log n$
and the claimed Gaussian limit distribution for $X_n$.

\black

\subsection{Proof of Proposition~\ref{pro:recurrence}} %
\label{sub:infinite_cases}

{\em (i)} Let $T_x$ denote the time (number of bullets to shoot) to destroy the slowest bullet in the flock given that it has a given speed $x\in [0,1]$. Considering the speed of the bullet that is first shot yields the following integral equation for $T_x$:
\begin{equation}
\label{eq:flock_diff_eq}
\E({T_x}) = 1 + x \left(\E(T_{Ux}) + \E({T_x})\right),
\end{equation}
where $U$ is an random variable uniform on $[0,1]$. This is a simple differential equation, and one obtains, with the condition that $T_0=1$ almost surely,
\[\E({T_x})=  1 /{(1-x)^2}.\]
This shows in particular that for every $x$, $T_x$ is almost surely finite, and in turn, that the time to destroy any finite number of bullets is also finite. This ensures that $F_n=0$ infinitely often.

{\em (ii)} For the case of the distances in the two-step tree, one easily verifies that the two subsequences $(D_{2k})_{k\ge 1}$ and $(D_{2k+1})_{k\ge 0}$ are non-decreasing. The convergence in probability thus implies almost sure convergence, and in turn the fact that there exists a (random) integer $n_0$ such that $D_n\ge 1$ for all $n\ge n_0$, which proves the claim.

\bibliographystyle{plainnat}  

\small
\setlength{\bibsep}{0.25em}
\bibliography{bibi}

\end{document}